\documentclass[12pt]{article}

\usepackage[english]{babel}
\usepackage{amsfonts,amsmath, amssymb,amsthm}
\usepackage{dutchcal}
\usepackage{hyperref}
\usepackage{csquotes}

\usepackage{xcolor}

\usepackage{enumerate}
\usepackage{epsf,epsfig,amsfonts,a4wide}
\usepackage{amsfonts, mathdots}
 
\usepackage{url}

\usepackage{graphicx}

\newtheorem{Theorem}{Theorem}[section]

\newtheorem{Lemma}{Lemma}[section]
\newtheorem{Proposition}{Proposition}[section]
\newtheorem{Corollary}{Corollary}[section]

\newtheorem{Ex}{Example}[section]
\newtheorem{Remark}{Remark}[section]

\theoremstyle{remark}

\newcommand{\be}{\begin{equation}}
\newcommand{\ee}{\end{equation}}

\newcommand{\R}{\mathbb{R}}\newcommand{\Id}{\textrm{\rm Id}}

\newcommand{\gl}{\mathrm{gl}}

\newcommand{\ddd}{\mathrm{d}}

\newcommand{\pd}[2]{\frac{\partial#1}{\partial#2}}

\newcommand{\dd}{{\mathrm d}\,}

\newtheorem{maintheorem}{\normalfont\textsc{Theorem}}

\newcommand{\weg}[1]{}

\usepackage[
  backend=biber,
  style=numeric,
  sorting=nyt,
  isbn=false, doi=true, url=false  
]{biblatex}
\title{Local description of $\gl$-regular Haantjes operators}
\author{Alexey V.\ Bolsinov\footnote{School of Mathematics,
 Loughborough University,
 LE11 3TU, UK  and Institute of Mathematics and Mathematical Modeling, Almaty, Kazakhstan\ \ \quad {\tt A.Bolsinov@lboro.ac.uk}},    
Andrey Yu.\  Konyaev\footnote{Faculty of Mechanics and Mathematics and Center for Fundamental and Applied Mathematics, Moscow State University, 119992, Moscow, Russia 
 \ \ \quad{\tt  maodzund@yandex.ru}}  \, and 
    Vladimir S.\ Matveev\footnote{ Corresponding author. 
Institut f\"ur Mathematik, Friedrich Schiller Universit\"at Jena,
07737 Jena,  Germany  \ \ \quad {\tt  vladimir.matveev@uni-jena.de}}}
\date{} 
\begin{document}

\maketitle

\begin{abstract} We study Haantjes operators, that is, $(1,1)$-tensor fields with vanishing Haantjes torsion. Our main result is a complete local description of $\gl$-regular Haantjes operators. Additional results include a splitting theorem for general (not necessarily $\gl$-regular) Haantjes operators and, more generally, for operators with vanishing generalised Nijenhuis torsion of an arbitrary level, as well as a complete treatment and understanding of the case when the eigenvalues of a Haantjes operator are complex; the latter case was ignored in many previous papers on this and related topics. 
\end{abstract}
\tableofcontents

{
\section{Introduction }

\subsection{Definitions and main results}
By an {\it operator} ({\it field}) we understand a $(1,1)$-tensor field on an open set $U\subseteq \mathbb{R}^n$. Recall that the {\it Nijenhuis torsion} of an operator field $L$ is the $(1,2)$-tensor field given by
\begin{equation}\label{nij_m}
\mathcal N_L (\xi, \eta) =  L^2[\xi, \eta] + [L\xi, L\eta] - L\,[L\xi, \eta] - L\,[\xi, L\eta],
\end{equation}
and the {\it Haantjes torsion} of $L$ is the $(1,2)$-tensor field given by
\begin{equation}\label{haa_m}
\mathcal H_L (\xi, \eta) =  L^2 \mathcal N_L(\xi, \eta) + \mathcal N_L (L\xi, L\eta) - L \,\mathcal N_L (L\xi, \eta) - L\, \mathcal N_L (\xi, L\eta).
\end{equation}
The square bracket $[\xi, \eta]$ in \eqref{nij_m} denotes the Lie bracket of vector fields $\xi$ and $\eta$. We say that  $L$ is a {\it Nijenhuis operator}  if its Nijenhuis torsion  $\mathcal N_L$ vanishes, and a {\it Haantjes operator}, if its Haantjes torsion $\mathcal H_L$ vanishes.

We say that an operator $L$ is {\it $\gl$-regular} at a point $\mathsf p\in U$ if, at this point, the geometric multiplicity of each of its eigenvalues equals $1$. We say that $\mathsf p\in U$ is {\it algebraically generic},  if there exists a neighbourhood of $\mathsf p$ in which the Segre characteristic of $L$, i.e., the sizes of the Jordan blocks related to each eigenvalue $\lambda_i$ of $L$,  does not change (specific values of $\lambda_i$'s are not important here; they can vary from point to point). Clearly, almost every point is algebraically generic.

Our main result, presented below in two equivalent Theorems \ref{thm:A} and \ref{thm:B}, is a local description of $\gl$-regular Haantjes operators near algebraically generic points.

\begin{maintheorem}\label{thm:A} {\it  
Let $L$ be a $\gl$-regular Haantjes operator. Then, in a neighbourhood of an algebraically generic point, $L$ can be written as
\begin{equation} 
\label{eq:polynom}
L=p(M)= f_0(x) \Id + f_1(x)M + f_2(x)M^2 + \dots + f_{n-1}(x)M^{n-1},
\end{equation}
where $f_i(x)$ are smooth functions  and $M$ is a Nijenhuis operator. Moreover, the operator $M=\bigl( M^i_j\bigr)$ can be chosen to be constant in a suitable coordinate system.}
\end{maintheorem}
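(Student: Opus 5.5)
We reduce the theorem to two facts: Haantjes operators split into blocks with a single eigenvalue, and $\gl$-regular Haantjes operators have integrable eigen-distributions. The latter lets us straighten a Nijenhuis operator of the same algebraic type.

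\textbf{Step 1: splitting into single-eigenvalue blocks.} Near an algebraically generic point the distinct eigenvalues $\lambda_1,\dots,\lambda_k$ of $L$ do not collide. In the complex case we pair $\lambda$ with $\bar\lambda$ and work with the real generalised eigenspaces of conjugate pairs. Hence the generalised eigenspaces $V_i=\ker (L-\lambda_i\Id)^{m_i}$ form smooth distributions. A standard property of Haantjes operators is that each $V_i$, and every sum $V_{i_1}\oplus\dots\oplus V_{i_s}$, is integrable. This is the classical Haantjes theorem; it is really the splitting statement announced in the abstract, and I would prove it directly from $\mathcal H_L=0$ by evaluating on vectors from generalised eigenspaces. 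Integrability of all partial sums gives coordinates $(x_1,\dots,x_k)$, with $x_i$ a block of variables, in which $L=\operatorname{diag}(L_1,\dots,L_k)$. The same computation shows that $L_i$ may depend on all variables, but $L_i$ restricted to the $x_i$-leaf is Haantjes with a single eigenvalue (or a single conjugate pair) and is $\gl$-regular, i.e. it is one Jordan block.

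\textbf{Step 2: one block.} Take $L$ with $L-\lambda\Id=N$ nilpotent and $\gl$-regular, so $N^{m-1}\ne0$ and $N^m=0$. Any operator that is a polynomial in $L$ with function coefficients is a polynomial in $N$. It therefore suffices to find coordinates in which $N=\sum_{j} g_j(x) J^j$, where $J$ is the constant nilpotent Jordan block and $g_1\ne0$. Consider the flag $\ker N\subset\ker N^2\subset\dots$, equivalently $\operatorname{Im}N^{m-1}\subset\dots\subset \operatorname{Im} N$. The key claim is that the Haantjes condition forces every distribution $\operatorname{Im}N^{j}$ to be integrable. For a $\gl$-regular operator these distributions are exactly the invariant ones, and the Haantjes torsion of $L$ is preserved under $L\mapsto p(L)$, so this is plausible. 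Integrability of the flag gives coordinates adapted to it, in which $N$ is upper-triangular with respect to the flag. We then have to upgrade this to "$N$ is a polynomial in a constant $J$". Here I would use that the centraliser of a regular nilpotent consists exactly of polynomials in it. So it suffices to build a frame $\partial_{1},\dots,\partial_m$ of commuting vector fields with $N\partial_{j}=h\,\partial_{j+1}+(\text{terms lower in the flag})$ arranged as a cyclic chain. The complex blocks are treated identically over $\mathbb{C}$ with $J$ replaced by the real form of the complex Jordan block.

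\textbf{Step 3: assembly.} In the split coordinates, set $M=\operatorname{diag}(c_1\Id+J_1,\dots,c_k\Id+J_k)$ with distinct constants $c_i$ (or real $2\times2$ forms for complex pairs). This $M$ is constant, hence Nijenhuis, and it is $\gl$-regular with distinct eigenvalues on different blocks. By Chinese remainder interpolation, any block-diagonal operator whose blocks are polynomials in $J_i$ equals $p(M)$ for some polynomial $p$ of degree $<n$. The coefficients solve a linear system with the constant invertible (confluent Vandermonde) matrix, so they are smooth functions $f_i(x)$.

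\textbf{Main obstacle.} The hard part is Step 2: showing that the Haantjes condition for a single $\gl$-regular Jordan block is strong enough to straighten it simultaneously to a polynomial in a constant $J$. Only the $(N^{m-1}\ne0)$-generic chain structure is available. I would first attempt an induction on $m$. Pass to the quotient by the foliation $\operatorname{Im}N^{m-1}$, which is one-dimensional and integrable. Check that $N$ descends to a Haantjes operator on the quotient, apply induction, and then lift, choosing the coordinate along the fibre so that the last column takes polynomial form. Verifying that the Haantjes identities give exactly the compatibility needed for this lift is where the real computation lies.
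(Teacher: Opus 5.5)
Your architecture matches the paper's: split into single-eigenvalue blocks, bring each block to Toeplitz form, then write $L=p(M)$ for a constant $\gl$-regular $M$. Step~3 itself is fine. The proposal nevertheless has genuine gaps.

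First, the assembly does not go through as written. After Step~1 each block $L_i$ depends on the transversal variables $x_j$, $j\ne i$, and you only record that its restriction to each $x_i$-slice is a Haantjes Jordan block. Normalising slice by slice via Step~2 generally produces a change of $x_i$ that depends on the $x_j$. Any such change destroys the block-diagonal form, because the level sets of $x_i$ must remain the leaves of $\bigoplus_{l\ne i}V_l$. So Step~3 has nothing to assemble. The missing idea is the mixed part of the Haantjes condition: $\mathcal H_L(\partial_{x_i^\alpha},\partial_{x_j^\beta})=0$ forces $[L_i,\partial L_i/\partial x_j^\beta]=0$. This is Theorem~\ref{thm:2}(iii) and Corollary~\ref{cor:2.1}; it comes from Lemma~\ref{lem:5.2} applied to the polynomial $g(L)=\operatorname{diag}(0,\dots,L_i,\dots,0)$, after shifting $L$ so that $L_i$ is invertible. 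By $\gl$-regularity and Lemma~\ref{lem:6.1}, $L_i(x)$ is then a polynomial with function coefficients in $L_i(0,\dots,0,x_i,0,\dots,0)$. So you only need to normalise the single slice through $\mathsf p$, by a change of $x_i$ alone, and the Toeplitz form then holds at every point.

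Second, Step~2 is the paper's Theorem~\ref{thm:1}, the technical heart of the result, and you do not prove it. You only call integrability of $\operatorname{Ker}N^j$ plausible, and your heuristic does not give it. For $\xi,\eta\in\operatorname{Ker}N^j$, the Haantjes identity for $K=N^j$ (Haantjes by Proposition~\ref{prop:3.3}) yields only $N^{4j}[\xi,\eta]=0$. That is why Proposition~\ref{prop:3.4} assumes $\operatorname{Ker}K=\operatorname{Ker}K^2$, which fails here for all $1\le j<m$. The paper gets integrability (Theorem~\ref{thm:5}) by rewriting $\mathcal H_L=0$ in a Jordan frame as $L^2C_i-2LC_{i-1}+C_{i-2}=0$, where $C_i=[L,[B_i,L]]$ and the $B_k$ are the structure-constant matrices. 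It then uses $c^i_{kj}=-c^i_{jk}$ to kill the obstructing coefficients $\beta_3,\beta_4,\dots$.

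Your inductive upgrade to Toeplitz form also fails as stated, because $N$ need not descend to the quotient by $\operatorname{Ker}N$. For example, $J_4+g(x)J_4^2$ is Haantjes for every $g$, but its quotient operator contains the entry $g$, which may depend on the fibre coordinate $x_1$. The paper instead extends the constant block one row at a time from the bottom up. Lemma~\ref{lem:4.2} shows that in the new row all entries but the last are independent of the leaf variables $x_1,\dots,x_k$. The last entry is removed by passing to $L-\frac{b_s}{b_1}L^s$, and the rest is straightened via the closed $1$-form of Lemma~\ref{lem:4.3}.

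Finally, complex blocks cannot simply be ``treated identically over $\mathbb C$''. There is no complex structure a priori, and even once one is found, $L^{\mathbb C}$ is not holomorphic. You need the canonical complex structure $J=p(L)$ (Theorem~\ref{thm:6}). You also need the fact that $L^{\mathbb C}$ commutes with $\partial L^{\mathbb C}/\partial\bar z_\alpha$ and is therefore $p(\widetilde L)$ with $\widetilde L$ a holomorphic Haantjes operator (Theorem~\ref{thm:7}, Corollaries~\ref{cor:7.3} and~\ref{cor:7.5}). Only then can the real argument be run holomorphically on $\widetilde L$.
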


As we recall below, for any Nijenhuis operator $M$ and any functions $f_0,\dots, f_{n-1}$, the operator $p(M)$ given by \eqref{eq:polynom} is Haantjes. It is easy to see that if
$L=p(M)$ is $\gl$-regular, then $M$ is $\gl$-regular automatically. Conversely, for a generic polynomial $p(\cdot)$ and a $\gl$-regular operator $M$, the operator $L=p(M)$ is $\gl$-regular also. More precisely, if $M$ is $\gl$-regular, and for the eigenvalues $\lambda_i$'s of $M$ we have $p'(\lambda_i)\ne 0$ and $p(\lambda_i)\ne p(\lambda_j)$ for $\lambda_i\ne \lambda_j$, then $p(M)$ is $\gl$-regular.
In view of this, Theorem \ref{thm:A} gives a complete description of $\gl$-regular Haantjes operators at almost every point.

\begin{Ex} \label{Ex:toeplitz1}{\rm 
   Take an operator field $M$ whose matrix, in local coordinates, is the standard upper triangular Jordan block $J_n(\lambda)$ with a constant eigenvalue $\lambda\in\R$. Then the operator $L=p(M)$ given by \eqref{eq:polynom} takes the form
       \begin{equation}\label{eq1}
    L = L_{\mathrm{Toeplitz}}=\begin{pmatrix}
     g_n & g_{n - 1} & \!\!\!\dots & g_2 & \!\! g_1  \\
     0 &  g_n & \!\!\!g_{n-1} & & \!\! g_2 \\
   0  &  0 &\!\!\! \ddots  & \ddots   &\!\!  \vdots \\
    \vdots & \vdots  &\!\!\! \ddots & g_n   & \!\! g_{n - 1}\\
     0 & 0 & \!\!\! \dots  & 0 &\!\!  g_n
    \end{pmatrix},
    \end{equation}
where $g_i$ are certain linear combinations of the functions $f_i$. The formula connecting $f_i$ and $g_i$ is invertible, so for any functions $g_i$ one can construct the unique polynomial $p$ of degree $\le n-1$, with functional coefficients, such that $L_{\mathrm{Toeplitz}}=p(M)$.
The form \eqref{eq1} is called the (real) {\it Toeplitz form} \cite{bottcher}.
}\end{Ex}

\begin{Ex} \label{Ex:toeplitz2}{\rm
In dimension $2n$,    take
   \begin{equation}
\label{eq:CompJblock_m}
M = J_n(a\pm\mathrm{i}\, b)=
\begin{pmatrix}
\Lambda & \Id_2 &  0  & \dots & 0  \\
& \Lambda & \Id_2&\ddots  & \vdots\\
& & \ddots & \ddots &0 \\
& & &\Lambda  &\Id_2 \\
& & & & \Lambda \\
\end{pmatrix}
\end{equation}
where each entry is a $2\times 2$ block representing a complex number, namely $\Lambda=\begin{pmatrix} a & - b \\ b & a
\end{pmatrix}\simeq a+\mathrm{i}\, b$ and $\Id_2 = \begin{pmatrix} 1 & 0 \\ 0 & 1\end{pmatrix} \simeq 1 + \mathrm{i} \, 0 = 1$. We will refer to \eqref{eq:CompJblock_m} as a real Jordan block corresponding to a pair of complex conjugate eigenvalues $\lambda^{\pm}=a\pm\mathrm{i}\, b$, $b\ne 0$.

   Then the operator $L$ given by \eqref{eq:polynom} is as follows
\begin{equation}\label{eq1b_m}
    L = \begin{pmatrix}
     G_n & G_{n - 1} & \!\!\!\dots & G_2 & \!\! G_1  \\
     0 &  G_n & \!\!\!G_{n-1} & & \!\! G_2 \\
   0  &  0 &\!\!\! \ddots  & \ddots   &\!\!  \vdots \\
    \vdots & \vdots  &\!\!\! \ddots & G_n   & \!\! G_{n - 1}\\
     0 & 0 & \!\!\! \dots  & 0 &\!\!  G_n
    \end{pmatrix} \end{equation} 
with $G_j$ being a $2\times 2$ block of the form $G_j = \begin{pmatrix} u_j  &  - v_j \\ v_j & u_j  \end{pmatrix} \simeq g_j=u_j + \mathrm{i} \, v_j$, where $u_j$ and $v_j$ are functions constructed from the functions $f_i$, $i=0,\dots, 2n-1$, by certain linear formulas. As in Example \ref{Ex:toeplitz1}, these formulas are invertible, so every operator field of the form \eqref{eq1b_m} is $p(M)$ for a suitable polynomial $p$. We refer to \eqref{eq1b_m} as {\it complex Toeplitz form}.
}\end{Ex}

\begin{maintheorem}\label{thm:B} {\it 
Let $L$ be a $\gl$-regular Haantjes operator. Then, in a neighbourhood of an algebraically generic point, there exists a coordinate system such that $L$ is block diagonal, \begin{equation} \label{eq:L} L= 
\operatorname{diag}\bigl(L_1,\dots, L_k\bigr)=  \begin{pmatrix} L_1 && \\ & \ddots &  \\ && L_k \end{pmatrix},  \end{equation}  and each block  $L_i$ is in real or complex Toeplitz form.
Moreover, any $L$ given by \eqref{eq:L}  with real or complex Toeplitz blocks is a Haantjes operator.} 
\end{maintheorem}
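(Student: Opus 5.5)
The plan is to prove Theorem~\ref{thm:B} directly and obtain Theorem~\ref{thm:A} from it. The converse direction is easy, so I do it first. Take $L=\operatorname{diag}(L_1,\dots,L_k)$ with Toeplitz blocks, and choose $M=\operatorname{diag}(M_1,\dots,M_k)$, where $M_i$ is a constant real Jordan block $J_{n_i}(\mu_i)$ or a constant complex block $J_{n_i}(a_i\pm\mathrm{i}\,b_i)$. The constant eigenvalues can be chosen freely, so we take them pairwise distinct across blocks. Then $M$ is constant, hence Nijenhuis. By Examples~\ref{Ex:toeplitz1} and~\ref{Ex:toeplitz2}, each block satisfies $L_i=p_i(M_i)$ for a polynomial $p_i$ with functional coefficients. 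Since the $M_i$ have disjoint spectra, Hermite interpolation (the Chinese remainder theorem for $\R[t]$ modulo the minimal polynomials of the $M_i$) gives a single polynomial $p$ with smooth coefficients such that $p(M_i)=p_i(M_i)$ for all $i$. Hence $L=p(M)$, and $L$ is Haantjes by the standard fact recalled after Theorem~\ref{thm:A}. This argument also proves that Theorem~\ref{thm:B} implies Theorem~\ref{thm:A}.

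For the direct statement, the first step is splitting. Near an algebraically generic point, the characteristic polynomial of $L$ factors smoothly into coprime factors: one factor for each real eigenvalue and one for each complex-conjugate pair. This gives the corresponding generalised eigendistributions. I would use the splitting theorem for Haantjes operators announced in the abstract: vanishing of $\mathcal H_L$ implies that each generalised eigendistribution, and each direct sum of them, is integrable. Choosing coordinates adapted to this complementary family of foliations makes $L$ block diagonal, with each block depending only on its own coordinates. By $\gl$-regularity, each block is then a single real Jordan block ($\lambda$ real) or a single complex one.

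Next, reduce each block to the nilpotent case. Haantjes torsion vanishes for $f_0\Id+f_1L+\dots$ whenever it vanishes for $L$. In the real case put $N=L-\lambda(x)\Id$, which is nilpotent, $\gl$-regular and Haantjes. In the complex case put $N=(L-a\Id)$ on the block. Here $\mathcal H_L=0$ should imply that the complex structure $J$ built from the semisimple part of $L$ is integrable. With respect to this $J$, $N=g\cdot(\text{complex nilpotent})$, and the same argument runs with $\mathbb C$-valued (not necessarily holomorphic) coefficients. For nilpotent $\gl$-regular $N$ I would use the Haantjes condition to show that the flags $\ker N^j$ and $\operatorname{im}N^j$ are integrable. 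I would then pick coordinates $x^1,\dots,x^n$ in which $\ker N^j=\operatorname{span}(\partial_1,\dots,\partial_j)$ and, additionally, $N\partial_{j+1}\equiv\partial_j$ modulo $\ker N^{j-1}$ up to a scalar factor.

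The goal is to show that in suitable such coordinates $N$ commutes with the constant Jordan block $J$. Then, since $J$ is $\gl$-regular, $N$ is a polynomial in $J$ with functional coefficients, which is exactly the Toeplitz form. This step is the main obstacle, because the Haantjes equations are quadratic in $N$ and not obviously strong enough to give this. What I would try first is an inductive normalisation. Evaluate $\mathcal H_N(\partial_i,\partial_j)$ on coordinate fields, and use the freedom of triangular coordinate changes $x^j\mapsto x^j+\phi(x^{j+1},\dots,x^n)$ to kill the non-Toeplitz entries of $N$ superdiagonal by superdiagonal. If that becomes unwieldy, the fallback is to construct the Nijenhuis operator $M=q(N)$, with $q(0)=0$ and $q'(0)\neq0$ having unknown functional coefficients. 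Imposing $\mathcal N_M=0$ yields a triangular first-order PDE system whose compatibility should be exactly $\mathcal H_N=0$. Then apply the known normal form: a $\gl$-regular nilpotent Nijenhuis operator is constant in suitable coordinates.
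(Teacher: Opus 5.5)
Your converse direction is correct and is essentially the paper's argument: constant $M$ with pairwise distinct eigenvalues, interpolation, then Proposition~\ref{prop:3.3}. The direct part, however, breaks at the splitting step.

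For Haantjes operators it is \emph{not} true that adapted coordinates make each block depend only on its own variables; that is the splitting theorem for Nijenhuis operators. For instance, $L=\operatorname{diag}(y,x)$ on $\R^2$ is Haantjes and $\gl$-regular where $x\ne y$. Yet its first block is the eigenvalue $y$, an invariant function that depends on the other block's variable in every adapted coordinate system. Your own converse argument likewise shows that Toeplitz blocks whose entries depend on all variables are Haantjes.

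What the mixed conditions $\mathcal H_L(\partial_{u^i},\partial_{v^j})=0$ actually give, using that the blocks have disjoint spectra, is only $[L_i,\partial L_i/\partial x_j]=0$ for $j\ne i$ (Theorem~\ref{thm:2}, via Lemma~\ref{lem:5.2}). Adapted coordinates are unique only up to changes $x_i\mapsto\phi_i(x_i)$ within each group. So you cannot normalise $L_i$ while treating the other variables as parameters: a change of $x_i$ depending on $x_j$ destroys block-diagonality. The missing idea is to bring $L_i$ to Toeplitz form on a single slice where the other variables are fixed. Then use $[L_i,\partial_{x_j}L_i]=0$ together with $\gl$-regularity (the centraliser of $L_i$ consists of polynomials in $L_i$). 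This shows that $L_i$ is everywhere a polynomial, with functional coefficients, in its value on that slice, hence Toeplitz (Lemma~\ref{lem:6.1}).

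Beyond this, the core of the proof is only announced, not carried out. There are three missing pieces.

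\textbf{Integrability of the flag.} Integrability of $\ker N^j$ does not follow from the general kernel argument: Proposition~\ref{prop:3.4} needs $\operatorname{Ker}N^2=\operatorname{Ker}N$, which fails for a Jordan block. The paper proves it in Theorem~\ref{thm:5} by solving the Haantjes relations for the structure-constant matrices $B_k$. It then uses the skew-symmetry $c^i_{kj}=-c^i_{jk}$ to kill the obstructing coefficients.

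\textbf{Normalisation.} Your normalisation step is left as two untried alternatives. The fallback $M=q(N)$ is in the right spirit, since the paper ends with $P(L)=J_n$. But the mechanism that makes it work is Proposition~\ref{prop:4.1}. At each step you replace $L$ by $p(L)$ and show that the new row entries $b_1,\dots,b_{s-1}$ do not depend on $x_1,\dots,x_k$ (Lemma~\ref{lem:4.2}). You then use that the Haantjes condition on the enlarged block is equivalent to closedness of $\beta=b_1^{-1}\dd y_1-\sum_{\alpha=2}^{s-1}b_\alpha b_1^{-1}\dd y_\alpha$ (Lemma~\ref{lem:4.3}); its primitive furnishes the new coordinate.

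\textbf{The complex case.} This cannot simply be run ``with $\mathbb C$-valued, not necessarily holomorphic coefficients''. The coordinates $z_k$ must be holomorphic for $J$, and holomorphic changes cannot normalise non-holomorphic data; e.g.\ $\beta$ need not have a holomorphic primitive. The missing fact is Theorem~\ref{thm:7}. With $J$ chosen as in Theorem~\ref{thm:6}, the eigenvalues of $L^{\mathbb C}$ lie in the upper half-plane, and $L_{\R}$ being Haantjes then forces $[L^{\mathbb C},\partial_{\bar z_\alpha}L^{\mathbb C}]=0$. This gives $L^{\mathbb C}=p(\widetilde L)$ with $\widetilde L$ a holomorphic Haantjes operator (Corollaries~\ref{cor:7.3} and~\ref{cor:7.5}). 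The real procedure is then applied to $\widetilde L$.
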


We emphasize that the functions $g_j$, $u_j$, and $v_j$ appearing in the blocks may depend on all variables and the functions $u_j+\mathrm i\, v_j$ are not necessarily holomorphic.

\subsection{Motivation}

Theorems \ref{thm:A} and \ref{thm:B} answer natural and important structural questions about the behaviour of $(1,1)$-tensor fields. They go  back essentially to J.\,Schouten \cite{schouten1}, who explicitly posed the following diagonalisability problem. Assume that an operator field $L$  on $U\subset\R^n$
has $n$ pairwise distinct real eigenvalues. When does there exist a coordinate system $u^1, \dots, u^n$ in which $L$ is diagonal? This particular question was answered by A.\,Nijenhuis and J.\,Haantjes, both PhD descendants of J.\,Schouten. Namely, A.\,Nijenhuis in his seminal work \cite{nijenhuis1951} introduced the Nijenhuis torsion \eqref{nij_m}, and wrote a condition for diagonalisability of $L$ in coordinate form. The tensorial version of this condition is due to J.\,Haantjes \cite{haantjes}. It also covers the case of operator fields $L$ that are pointwise diagonalisable but not necessarily $\gl$-regular: for such operator fields, the local diagonalisability at algebraically generic points is equivalent to the vanishing of the Haantjes torsion \eqref{haa_m}.   However, for non-diagonalisable operators the geometric meaning of the Haantjes condition has remained obscure for 70 years since the pioneering work of J.\,Haantjes \cite{haantjes} published in 1955.

Vanishing of the Nijenhuis and Haantjes torsions are the simplest differential-geometric conditions on operator fields (see e.g. \cite{puninstkii, kolar}).  In some sense, they are analogous to such well-known conditions in differential geometry as the Jacobi identity for a Poisson bivector $P= \sum P^{ij}\partial_{x^i}\wedge\partial_{x^j}$ and closedness for  a symplectic form $\omega = \sum \omega_{ij}\dd x^i\wedge \dd x^j$. Nijenhuis and Haantjes operators naturally pop up in many a priori unrelated branches of mathematics and mathematical physics. The general philosophy of treating them as primary objects,  as outlined in the program paper \cite{nij1} on Nijenhuis geometry, has proven very fruitful. This  is demonstrated by a series of subsequent publications on this topic (entitled {\it Nijenhuis Geometry} II--IV and {\it Applications of Nijenhuis Geometry} I--V), some of which solved explicitly posed open problems with no apparent connection to Nijenhuis geometry.  

In discussions with our colleagues, the following questions were frequently raised: is the vanishing of $\mathcal N_L$ (Nijenhuis condition) too strong? Would it make sense to replace it by the weaker Haantjes condition $\mathcal H_L=0$ to expand the range of possible applications? In many cases, this question came from experts in the theory of finite- and infinite-dimensional integrable systems. Indeed, the Haantjes condition $\mathcal H_L=0$ naturally appeared in different studies and in many setups. Let us give some classic and new examples. 

Consider an $n$-dimensional metric $g$ such that its geodesic flow (i.e., the Hamiltonian system with the Hamiltonian  function $H = \tfrac{1}{2}g^{\alpha\beta} p_\alpha p_\beta$) admits $n$ Poisson-commuting, functionally independent quadratic integrals of the form $I_i= g^{\alpha\gamma}(K_i)_\gamma^\beta p_\alpha p_\beta$, where $K_i$ are operator fields known as Killing $(1,1)$-tensors. If they algebraically commute, i.e., $K_iK_j=K_jK_i$, then the Haantjes torsion of $K_i$ vanishes.
In the case, when  $K_i$ have no Jordan blocks, the result is proved in \cite{KM1980}. The general case was solved only recently in \cite{BKM_generalseparation}.

In the theory of infinite-dimensional integrable systems, the condition of vanishing of the Haantjes torsion naturally appears in the study of integrable systems of hydrodynamic type, i.e., quasilinear evolutionary PDE systems
\begin{equation} \label{eq:1_m}
u_t= L(u)u_x.
\end{equation}
for an unknown vector function $u=(u^1(x,t), \dots, u^n(x,t))^\top$.
The subscripts $x$ and $t$ denote partial derivatives, and $L=L(u)$ is an $n\times n$ matrix viewed as an operator field written in local coordinates $u^1,\dots, u^n$. Such systems model many processes in mathematical physics and also appear in many purely  mathematical problems, see e.g. \cite{Serre3,tsarev1}.
 Integrability of \eqref{eq:1_m}  is equivalent to  the existence of $n$ mutually commuting symmetries which are linearly independent at every point (see e.g. \cite{tsarev, tsarev1, BKMRiemanninvariants2026}). In this context, the importance of the vanishing of the Haantjes torsion $\mathcal H_L$ for integrability was observed in many publications, e.g. \cite{FM2007, Sharipov, Magri2018}. It was conjectured in \cite{BKMRiemanninvariants2026} that vanishing of the Haantjes torsion is a necessary condition for integrability of \eqref{eq:1_m},  if $L$ is $\gl$-regular.  The conjecture is verified up to dimension $10$, and completely proved in the case, when $L$ is pointwise diagonalisable, \cite{BKMRiemanninvariants2026}.

The relations between Haantjes operators and finite- and infinite-dimensional integrable systems discussed above suggest that the local  classification of Haantjes operators may play an important role in their study.  Namely, one can start with a Haantjes operator as a primary object, and then consider the other objects  as unknown (e.g., a metric $g$ with integrable geodesic flow or commuting symmetries and conservation laws  of \eqref{eq:1_m}). Note that in these  studies  the operator fields are customarily  assumed to be $\gl$-regular. 
For a discussion and implementation of this approach in the theory of separation of variables, see \cite{TT22b, TT16, KKM2024}. We also refer to the recent papers by P.\,Lorenzoni et al. \cite{Lorenzoni2025, Lorenzoni2026} and the classical paper by S.\,Tsarev \cite{tsarev}  devoted to the integration of systems \eqref{eq:1_m}, where the authors use a natural ansatz for the operator field $L(u)$ (either diagonal or block diagonal with Toeplitz blocks) which already assumes that the Haantjes torsion of $L$ vanishes. Our local classification of Haantjes operators provides the geometric underpinning for this body of work and, in particular, shows that no significantly different cases are lost in this approach. 

Our results also demonstrate that at least in the $\gl$-regular case, the relationship between Haantjes and Nijenhuis geometries is very close: near generic points, each Haantjes operator $L$ is a polynomial of a suitable Nijenhuis operator $M$.   Thus, in terms of potential applications,  we have a natural way to extend results and methods developed in the framework of Nijenhuis geometry to the more general case of Haantjes operators.  Incidentally, this paper itself can serve as a good illustration of such an extension:  two steps of our construction (splitting theorem and description of Haantjes operators with complex eigenvalues) are largely based on the ideas developed in \cite{nij1} for Nijenhuis operators.

\subsection{Scheme of the proof}

Although the equivalent Theorems \ref{thm:A} and \ref{thm:B} are easy to formulate, their proof is quite nontrivial. We start, in Section~\ref{sect:3}, with basic properties of the Haantjes torsion, which will then be used in the proofs.

Our first result, which is actually the most challenging from a technical point of view, can be considered a crucial step in the proof of Theorems~\ref{thm:A} and~\ref{thm:B}.  In what follows,  we assume that our operator fields are defined on a manifold $\mathsf M$  which,  for the sake of simplicity, can be understood as an open subset of $\R^n$. 

\begin{Theorem}\label{thm:1}
Let $L$ be a Haantjes operator which is pointwise similar to a Jordan block with a real eigenvalue $\lambda(\mathsf p)$ depending on $\mathsf p\in\mathsf M$. Then, locally, by an appropriate coordinate transformation, $L$ can be reduced to the upper triangular Toeplitz form \eqref{eq1}. Conversely, if $L$ reduces to an upper triangular Toeplitz form, then $L$ is Haantjes.
\end{Theorem}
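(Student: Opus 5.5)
The converse is the easy half, and I would dispose of it first. If in some coordinates $L$ has the Toeplitz form \eqref{eq1}, then $L=p(J)$, where $J=J_n(0)$ is a constant nilpotent Jordan block and $p$ is a polynomial with functional coefficients. A constant operator is Nijenhuis. Using the fact recalled after Theorem~\ref{thm:A} (a polynomial with functional coefficients in a Nijenhuis operator is Haantjes), $L$ is Haantjes. If one wants this self-contained, expand $\mathcal H_{p(M)}$ using tensoriality of $\mathcal N$ and the identity $\mathcal N_{fA+gB}$ for commuting $A,B$. All terms involving derivatives of the coefficients cancel in the Haantjes combination, because the operators appearing are all polynomials in $M$.

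For the direct statement I would first normalise the eigenvalue. Write $L=\lambda\Id+N$ with $N$ nilpotent and $N^{n-1}\neq0$ everywhere. I would use the basic property from Section~\ref{sect:3} that $\mathcal H_{f\Id+gL}=g^4\mathcal H_L$ to get $\mathcal H_N=0$. Since $\lambda\Id$ is already Toeplitz, it suffices to bring $N$ to Toeplitz form. Next I would invoke the standard consequence of the Haantjes condition that the images of $L$-polynomials are integrable distributions. This gives a flag of integrable distributions
\[
\ker N=\operatorname{im}N^{n-1}\subset\ker N^2=\operatorname{im}N^{n-2}\subset\dots\subset\ker N^{n-1}=\operatorname{im}N .
\]

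The real goal is a reparametrisation. I would look for a polynomial
\[
q(t)=c_1(x)t+c_2(x)t^2+\dots+c_{n-1}(x)t^{n-1},\qquad c_1\neq0,
\]
such that $M=q(N)$ is a Nijenhuis operator. Then $M$ is a $\gl$-regular nilpotent Nijenhuis operator with constant (zero) eigenvalue. By the normal form from Nijenhuis geometry \cite{nij1} for a single $\gl$-regular Jordan block with constant eigenvalue, $M=J_n(0)$ in suitable coordinates. Since $q$ is invertible as a formal series modulo $t^n$, $N=q^{-1}(M)$ is a polynomial in $J$, that is, Toeplitz.

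To produce $q$ I would argue by induction on $n$, using the foliation $\mathcal F$ tangent to $\ker N$. Locally, $N$ descends to the $(n-1)$-dimensional quotient by $\mathcal F$, and the descended operator $\bar N$ is again a regular nilpotent Haantjes operator there, possibly after checking that $N$ is projectable. By induction, $\bar N$ is Toeplitz in coordinates $x^1,\dots,x^{n-1}$ on the leaf space. One then adds a fibre coordinate $x^n$ and examines the last column of $N$. The Haantjes equations $\mathcal H_N(\xi,\eta)=0$ should force the remaining entries into Toeplitz form, up to a correction that can be absorbed by a change $x^n\mapsto x^n+\varphi(x)$ and a modification of the lower coefficient functions.

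\textbf{Main obstacle.} I expect the hard step to be the projectability of $N$ and the final inductive step. The Haantjes condition gives only a quartic-type constraint, so extracting the needed first-order PDE on the last column requires care. Solvability of that PDE (a Frobenius-type integrability check) is where I anticipate the real work. I would first try to verify it by writing $\mathcal H_N$ in the adapted frame $\xi, N\xi,\dots,N^{n-1}\xi$ for a vector field $\xi$ transversal to $\operatorname{im}N$.
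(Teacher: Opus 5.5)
Your converse and the shift $L\mapsto L-\lambda\Id$ are fine and match the paper. The gap is the step you call a ``standard consequence of the Haantjes condition'', namely integrability of the flag $\ker N^k=\operatorname{im}N^{n-k}$. This is not a general fact. Every $L$ with $L^2=0$ is Haantjes, since all four terms of \eqref{haa_m} vanish. In $\R^4$ one can take such an $L$ with $\operatorname{im}L=\ker L=\operatorname{span}(\partial_{x_1},\partial_{x_2}+x_1\partial_{x_3})$, which is not integrable. The general result, Proposition~\ref{prop:3.4}, needs $\ker L^2=\ker L$, and this fails for a Jordan block. For a single regular nilpotent block the flag is indeed integrable, but that is exactly the paper's Triangularization Lemma (Theorem~\ref{thm:5}), which the authors single out as the technically hardest step. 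In a frame with $Ne_i=e_{i-1}$, the condition $\mathcal H_N=0$ becomes the linear system $N^2C_i-2NC_{i-1}+C_{i-2}=0$ for $C_i=[N,[B_i,N]]$, where $B_i$ are the matrices of structure constants. Its solution still contains coefficients $\beta_3,\beta_4,\dots$ that obstruct integrability, and only the skew-symmetry $c^i_{kj}=-c^i_{jk}$ forces them to vanish. So you have assumed the crux of the theorem.

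The remaining argument is left open, and its first step fails as stated: $N$ need not be projectable along $\ker N$. For example, $N=g(x)J_n$ with $\partial_{x_1}g\neq0$ is Haantjes by Proposition~\ref{prop:3.3}, with $\ker N=\operatorname{span}(\partial_{x_1})$. For $n\ge 3$, however, its induced block $gJ_{n-1}$ on $T\mathsf M/\ker N$ varies along the leaves.

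So you must first pass to a polynomial in $N$, and this amounts to proving $[\bar N,\partial_{x_1}\bar N]=0$. The relation $\mathcal H_N(\partial_{x_1},\cdot)=0$ gives only $N^2[N,\partial_{x_1}N]=0$, i.e.\ that this commutator takes values in $\ker\bar N$; the rest again needs the whole system.

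Your reformulation (find $q$ with $q(N)$ Nijenhuis, hence constant) is in spirit what the paper does, since it builds $P$ with $P(L)=J_n$. But all the content lies in constructing $q$. Once the flag is integrable, the paper normalises the quotient blocks from the top of the flag downward to the constant block $J_s$ (Proposition~\ref{prop:4.1}):
\begin{itemize}
\item Lemma~\ref{lem:4.2} shows, using the full flag, that the entries $b_1,\dots,b_{s-1}$ of the new row do not depend on the leaf coordinates.
\item $b_s$ is removed by $L\mapsto L-\tfrac{b_s}{b_1}L^s$.
\item Lemma~\ref{lem:4.3} turns the Haantjes condition for the extended block into closedness of $\tfrac{1}{b_1}\mathrm{d}y_1-\sum_{i=2}^{s-1}\tfrac{b_i}{b_1}\mathrm{d}y_i$, whose primitive is the new coordinate.
\item The leftover corner entry is removed by $L\mapsto L-c\,L^s$.
\end{itemize}
These are precisely the ingredients your ``main obstacle'' leaves unaddressed.
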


A similar result holds for Jordan blocks related to complex conjugate eigenvalues. To state it, we first recall a well-known fact from matrix algebra. Assume that $L$ is a real $2n \times 2n$ matrix with characteristic polynomial
$$
\chi_L(t)=\det (L - t \,\Id) =\bigl(t^2 - 2a t + (a^2+ b^2)\bigr)^n, \quad a,b\in \R,\quad b\ne 0.
$$
In other words, $L$ admits one pair of complex conjugate eigenvalues $a \pm \mathrm{i} b$ of multiplicity $n$ and no other eigenvalues. We also assume that
$$
\operatorname{rank}\Bigl(L^2 - 2a L + (a^2+b^2)\Id\Bigr) = 2(n-1),
$$
or, equivalently, that the minimal polynomial of $L$ coincides with $\chi_L(t)$. In this case, by a suitable similarity transformation, $L$ can be reduced to the canonical form  $J_n(a\pm\mathrm{i}\, b)$ given by \eqref{eq:CompJblock_m}, i.e., real Jordan block associated with the pair of complex conjugate eigenvalues $\lambda^{\pm}=a\pm\mathrm{i}\, b$.

For Haantjes operators $L$ of this algebraic type, we have the following complex analogue of Theorem~\ref{thm:1}.

\begin{Theorem}\label{thm:1b}
Assume that $L$ is a Haantjes operator which is similar to a real Jordan block related to a pair of complex conjugate eigenvalues $\lambda^{\pm}(\mathsf p) = a(\mathsf p) \pm \mathrm{i}\, b(\mathsf p)$, $b\ne 0$ at each point $\mathsf p \in \mathsf M^{2n}$.   Then there exists a complex structure $J$ on $\mathsf M$ and a local complex coordinate system $z_1=x_1+\mathrm{i}y_1,\dots, z_n=x_n+\mathrm{i} y_n$ in which $L$ is given by a Toeplitz $n\times n$ matrix \eqref{eq1}  with $g_j= g_j(\mathsf p)=u_j(\mathsf p) +\mathrm{i} v_j(\mathsf p)$ being a complex valued smooth function (not necessarily holomorphic).  Equivalently,  in the real coordinates $x_1, y_1, x_2, y_2,   \dots, x_n, y_n$,  the operator $L$ is given by $2n\times 2n$ real matrix 
    \begin{equation}\label{eq1b}
    \begin{pmatrix}
     G_n & G_{n - 1} & \!\!\!\dots & G_2 & \!\! G_1  \\
     0 &  G_n & \!\!\!G_{n-1} & & \!\! G_2 \\
   0  &  0 &\!\!\! \ddots  & \ddots   &\!\!  \vdots \\
    \vdots & \vdots  &\!\!\! \ddots & G_n   & \!\! G_{n - 1}\\
     0 & 0 & \!\!\! \dots  & 0 &\!\!  G_n
    \end{pmatrix}    
    \end{equation}
with $G_j$ being  $2\times 2$ block of the form $G_j = \begin{pmatrix} u_j  &  - v_j \\ v_j & u_j  \end{pmatrix} \simeq g_j=u_j + \mathrm{i} v_j$.  Conversely,  for all smooth functions $u_1,v_1, \dots, u_n, v_n$, the operator \eqref{eq1b} is Haantjes. 
\end{Theorem}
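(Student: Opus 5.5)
Theorem~\ref{thm:1b} is the complex analogue of Theorem~\ref{thm:1}, and I will reduce it to the real-eigenvalue machinery by complexification. The plan has four steps: build a canonical almost complex structure from $L$, show it is integrable, run the proof of Theorem~\ref{thm:1} holomorphically, and check the converse separately.

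\textbf{Step 1: the almost complex structure.} Since the minimal polynomial of $L$ is $\chi_L(t)$, every operator commuting with $L$ is a polynomial in $L$ with coefficients that are functions of the point. I will look for $J$ in the form $J=P(L)$ with $J^2=-\Id$, chosen so that $J$ acts as $+\mathrm{i}$ on the generalised eigenspace of $a+\mathrm{i}b$ and as $-\mathrm{i}$ on that of $a-\mathrm{i}b$.

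Concretely, write $L=S+N$ for the real Jordan decomposition, with $S$ semisimple and $N$ nilpotent, both polynomials in $L$. Then $S=a\,\Id+bJ$, so
\[
J=\tfrac{1}{b}(S-a\,\Id).
\]
This $J$ is a polynomial in $L$ whose coefficients are smooth functions of $a$, $b$ and the entries of $L$, since $b\neq0$. Hence $J$ is smooth, commutes with $L$, and $L$ is $J$-complex linear.

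\textbf{Step 2: integrability of $J$.} This is the step I expect to be the main obstacle. I need an earlier result from Section~\ref{sect:3} saying that, for a Haantjes operator, generalised eigendistributions are integrable, and more strongly that operators of the form $P(L)$ inherit useful torsion properties. In the Nijenhuis case one shows directly that $\mathcal N_J=0$ when $J$ is a polynomial in $L$ with these spectral properties. In the Haantjes case I would argue as follows.

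First, the Haantjes torsion is compatible with complexification, so $\mathcal H_L=0$ on $T\mathsf M\otimes\mathbb C$. The $\pm\mathrm{i}$-eigendistributions of $J$ are exactly the generalised eigendistributions $\ker (L-\lambda^\pm)^n$. Second, I will invoke the Haantjes analogue of the fact that generalised eigendistributions of Haantjes operators with distinct eigenvalues are involutive, which is the splitting argument. Its proof uses the identity
\[
\mathcal H_L(\xi,\eta)=(L-\lambda)^2\ \text{applied to combinations of } \mathcal N_L
\]
restricted to eigenvectors, and it works over $\mathbb C$ verbatim. Involutivity of $T^{1,0}$ is then Newlander--Nirenberg integrability of $J$, which gives complex coordinates $z_1,\dots,z_n$.

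\textbf{Step 3: the complex Theorem~\ref{thm:1}.} In these coordinates $L$ becomes a complex $n\times n$ matrix field $\tilde L$ acting on $T^{1,0}$, pointwise similar to a single complex Jordan block with eigenvalue $\lambda^+$. The matrix $\tilde L$ is not holomorphic. I will repeat the proof of Theorem~\ref{thm:1} with $T^{1,0}$ in place of $T\mathsf M$.

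The key structural steps there are the construction of a flag of distributions $\ker(\tilde L-\lambda)^k$ and the normalisation of a nilpotent part $N=\tilde L-\lambda\Id$ to a constant Jordan form. These should transfer because they only use $\mathcal H=0$ restricted to $T^{1,0}$ together with $(1,0)$-brackets. Those brackets stay in $T^{1,0}$ by integrability. The new ingredient is that the coordinate changes must be holomorphic: the normalising vector fields must commute and be holomorphic in the sense $[\bar\partial,\cdot]$-compatible. I would try to show this using the fact that $\bar{T^{0,1}}$-brackets with $(1,0)$ fields in the flag stay in the flag, which follows from $\mathcal H_L(\xi,\bar\eta)=0$ with $L$ acting as $\bar\lambda$ on $\bar\eta$.

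\textbf{Step 4: the converse.} This is a direct check. The operator~\eqref{eq1b} equals $p(M)$ for the constant Nijenhuis operator $M=J_n(a\pm\mathrm{i}b)$ from Example~\ref{Ex:toeplitz2}, and the fact recalled after Theorem~\ref{thm:A} says that every such polynomial $p(M)$ is Haantjes.
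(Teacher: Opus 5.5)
Steps 1, 2 and 4 are sound. Step 2 has a shortcut: $J$ is a real polynomial in $L$, so it is Haantjes by Proposition~\ref{prop:3.3}, and $J^2=-\Id$ forces $\mathcal H_J=-4\mathcal N_J$.

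\textbf{The gap is in Step 3, where the real content of the theorem lies.} In holomorphic coordinates the matrix $L^{\mathbb C}$ depends on $\bar z$, and your plan has nothing that controls this dependence.

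The relations $\mathcal H_L(\partial_{z_\alpha},\partial_{z_\beta})=0$ involve only $\partial_z$-derivatives. Repeating Sections~\ref{subsec:4.1}--\ref{subsec:4.2} inside $T^{1,0}$ therefore gives at most involutivity of the flag $\ker(L^{\mathbb C}-\lambda^+)^k$ within $T^{1,0}$. Two further things are needed:
\begin{itemize}
\item Holomorphic adapted coordinates require the flag to be a holomorphic subbundle.
\item The normalisations of Section~\ref{subsec:4.2} (Lemma~\ref{lem:4.3} and the change $(y_1)_{\mathrm{new}}=F$ with $\dd F=\beta+\dots$) produce holomorphic coordinate changes only if the coefficients $b_i$ are holomorphic. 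Your argument gives no reason to expect that.
\end{itemize}
Your claim that $(0,1)$-brackets preserve the flag `because $L$ acts as $\bar\lambda$ on $\bar\eta$' is asserted, not derived. Even if true, it would only settle the triangularisation, not the Toeplitz normalisation.

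\textbf{The missing idea is to use the mixed components $\mathcal H_{L_\R}(\partial_{z_i},\partial_{\bar z_j})=0$.}
\begin{itemize}
\item In the frame $\partial_z,\partial_{\bar z}$ one has $L_\R=\operatorname{diag}(L^{\mathbb C},\overline{L^{\mathbb C}})$. This is exactly the block form of Theorem~\ref{thm:2}.
\item Apply the computation of Lemma~\ref{lem:5.2} and the argument after it. These are purely algebraic, so they work with complex frames.
\item $L^{\mathbb C}$ and $\overline{L^{\mathbb C}}$ have no common eigenvalues ($\lambda^+$ versus $\lambda^-$, $b\neq 0$). Together with the previous point this gives $[L^{\mathbb C},\partial L^{\mathbb C}/\partial\bar z_\alpha]=0$; this is Theorem~\ref{thm:7}. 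Example~\ref{ex:7.1} shows the relation can fail without the spectral condition.
\item Because $L^{\mathbb C}$ is $\gl$-regular, this commutation lets you write $L^{\mathbb C}=p(\widetilde L)$ with $\widetilde L$ holomorphic (Corollary~\ref{cor:7.3}).
\item $\widetilde L$ is Haantjes, since it is a polynomial in $L$.
\item Now the proof of Theorem~\ref{thm:1} runs verbatim in the holomorphic category for $\widetilde L$. The Toeplitz form then passes to $L^{\mathbb C}=p(\widetilde L)$, because polynomials of upper triangular Toeplitz matrices are Toeplitz.
\end{itemize}
Alternatively, you could carry the commutation relation through your own step-by-step normalisation, since it is preserved under $L\mapsto p(L)$. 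Either way, some use of it is indispensable.
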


As compared with the proof of Theorem~\ref{thm:1} (Sections \ref{subsec:4.1}, \ref{subsec:4.2}), the proof of Theorem~\ref{thm:1b} contains one additional nontrivial step: we prove that a Haantjes operator with no real eigenvalues admits  a canonical complex structure and, moreover, can be written as a polynomial of a holomorphic Haantjes operator; see Appendix (Section \ref{appendix}). Modulo this step, the proof of Theorem~\ref{thm:1b}  (Section \ref{subsec:4.3}) follows the same lines as that of Theorem~\ref{thm:1}.

\begin{Remark}
\rm{
The `canonical' forms  \eqref{eq1} and \eqref{eq1b} in Theorems \ref{thm:1} and \ref{thm:1b} are not unique since there is a large family of coordinate transformations that change $g_i$'s in \eqref{eq1} but preserve the Toeplitz form of $L$. They are parametrized by $n - 1$ functions of two variables and one function of one variable. The explicit description of such transformations is equivalent to the description of Nijenhuis operators of the form \eqref{eq1} (see \cite{chernin} for details). 
}    
\end{Remark}

The next important step, {\it Splitting Theorem} for Haantjes operators,  proved in Section \ref{sect:5},  is related to Haantjes operators with an arbitrary spectrum. Moreover, the spectral structure (Segre characteristic) of $L$ may vary from point to point.

\begin{Theorem}\label{thm:2}
Let $L$ be a Haantjes operator. Assume that at a point $\mathsf p\in\mathsf M$, its characteristic polynomial factorises as $\chi_{L(\mathsf p)}(\lambda) = p_1(\lambda)p_2(\lambda)$, where $p_1(\lambda)$ and $p_2(\lambda)$ are coprime monic polynomials. Then, in a neighborhood of $\mathsf p$, there exists a coordinate system 
$$
\underbrace{u^1, \dots, u^{k}}_{u}, \underbrace{v^1, \dots, v^{s}}_{v},  \quad \text{where} \quad k = \operatorname{deg} p_1(\lambda),  s = \operatorname{deg} p_2(\lambda)
$$
such that
\begin{enumerate}
    \item[\rm(i)] The operator field $L$ has the form
    \begin{equation}\label{form2}
     L = \left( \begin{array}{cc}
          A (u, v) & 0_{k \times s}  \\
          0_{s \times k} & B(u, v)
     \end{array}\right)   
    \end{equation}
    \item[\rm(ii)] $A$ is Haantjes with respect to $u$, and $B$ is Haantjes with respect to $v$;
    \item[\rm(iii)] $A A_{v^j} - A_{v^j} A = 0$,
    where $A_{v^j}$ is the matrix  $\pd{A}{v^j}=\pd{}{v^j} \left( A^i_k\right)$;
    \item[\rm(iv)] $B B_{u^i} - B_{u^i} B = 0$, where $B_{u^i}$ is the matrix $\pd{B}{u^i}=\pd{}{u^i} \left( B^j_k\right)$.
\end{enumerate}
Moreover, every block diagonal operator \eqref{form2} satisfying {\rm (ii), (iii), (iv)} is Haantjes.   
\end{Theorem}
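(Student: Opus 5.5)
The proof will follow the Nijenhuis splitting theorem of \cite{nij1}, with the Nijenhuis torsion replaced by the Haantjes torsion.

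\textbf{Step 1: invariant distributions.} Since $p_1,p_2$ are coprime at $\mathsf p$, they remain coprime nearby. By continuity of roots (or the implicit function theorem applied to polynomial division), $\chi_L$ factorises as $\chi_L=p_1^{(x)}p_2^{(x)}$ on a neighbourhood, with smoothly varying coprime monic factors. Set
$$\mathcal D_1=\ker p_1^{(x)}(L),\qquad \mathcal D_2=\ker p_2^{(x)}(L).$$
These are smooth $L$-invariant distributions of ranks $k$ and $s$, and $T\mathsf M=\mathcal D_1\oplus\mathcal D_2$. The spectral projectors $P_i$ onto $\mathcal D_i$ are polynomials in $L$ with smooth coefficients; concretely, $P_1=q_2(L)p_2(L)$ via a Bezout identity.

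\textbf{Step 2: integrability.} I need that each $\mathcal D_i$ is integrable and that their sum is trivially integrable, so that Frobenius gives coordinates $(u,v)$ with $\mathcal D_1=\operatorname{span}\{\partial_u\}$ and $\mathcal D_2=\operatorname{span}\{\partial_v\}$. In these coordinates $L$ automatically takes the block form \eqref{form2}.

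The key tool is the classical property of Haantjes operators (presumably among the basic facts of Section~\ref{sect:3}): if $\mathcal H_L=0$, then for any polynomial $q$ with functional coefficients, $\mathcal H_{q(L)}=0$, and moreover generalised eigen-distributions are integrable. Concretely, I would use the formula expressing the Haantjes torsion of a polynomial in $L$ through $\mathcal H_L$. It implies that $P_i$ (a polynomial in $L$ with $P_i^2=P_i$) is itself Haantjes.

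For a projector $P$ with $\mathcal H_P=0$, a direct computation with $P^2=P$ reduces $\mathcal H_P(\xi,\eta)$ for $\xi,\eta\in\ker P$ (respectively $\operatorname{im}P$) to $P[\xi,\eta]$ (respectively $(\Id-P)[\xi,\eta]$) up to sign. Hence both $\ker P$ and $\operatorname{im}P$ are involutive. I expect this step to be the main obstacle, namely establishing cleanly that polynomials of Haantjes operators with non-constant coefficients remain Haantjes. If that lemma is not available earlier, I would instead prove directly that $\mathcal H_L(\xi,\eta)$ for $\xi,\eta\in\mathcal D_1$ encodes the projection $p_1(L)^N P_2[\xi,\eta]$. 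The operator $p_1(L)$ is invertible on $\mathcal D_2$, so this would force $P_2[\xi,\eta]=0$.

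\textbf{Step 3: conditions (ii)--(iv).} Write $\mathcal H_L=0$ in the block coordinates and split by the type of arguments. For $\xi,\eta$ both in $\partial_u$, the $u$-component gives exactly that $A$ is Haantjes in the $u$-variables, with $v$ treated as parameters, and symmetrically for $B$. This is (ii). For $\xi=\partial_{u^i}$ and $\eta=\partial_{v^j}$, the Nijenhuis torsion has components
$$\mathcal N_L(\partial_{u^i},\partial_{v^j})=(B-\lambda)\,\partial_{u^i}B\text{-type terms}+(A-\cdot)\,\partial_{v^j}A\text{-type terms},$$
i.e.\ expressions like $A A_{v^j}\partial_{u^i}-B\partial_{v^j}(A)$. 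Applying the Haantjes combination, and using that $L$ acts by $A$ and $B$ on the two blocks, the $\mathcal D_1$-component becomes a $2$-sided operator expression such as $(A-b)^2$-weighted commutators $[A,A_{v^j}]$ composed with $B$-polynomials. Because $A$ and $B$ have disjoint spectra, the Sylvester-type operator $X\mapsto \sum$ of such products is invertible. This yields $[A,A_{v^j}]=0$, and symmetrically $[B,B_{u^i}]=0$.

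For the converse, the same computation read backwards shows that all components of $\mathcal H_L$ vanish under (ii)--(iv). The mixed components vanish because they are built from the commutators, and the pure components vanish by (ii) plus the commutation relations, which kill the terms involving derivatives in the other variables.
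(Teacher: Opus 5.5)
Your proposal is correct. Steps 1--2 are the paper's argument:
\begin{itemize}
\item the smooth coprime factorisation is Corollary~\ref{cor:5.1}, proved by the inverse function theorem with the Sylvester matrix as Jacobian (Lemma~\ref{lem:5.1});
\item the fact you flagged as the main obstacle is available as Proposition~\ref{prop:3.3};
\item the paper then applies Proposition~\ref{prop:3.4} to $M_i=\chi_i(L)$, using $\mathcal H_M(\xi,\eta)=M^4[\xi,\eta]$ for $\xi,\eta\in\operatorname{Ker}M$. Your projector identity $\mathcal H_P(\xi,\eta)=P[\xi,\eta]$ is the same computation applied to a projector.
\end{itemize}

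The real difference is in (iii)--(iv). Both routes need the mixed component of $\mathcal H_L$, which you only gesture at. Your displayed $\mathcal N_L(\partial_{u^i},\partial_{v^j})$ is not meaningful as written: its $\partial_u$-part is $\bigl(AA_{v^j}-\sum_q B^q_jA_{v^q}\bigr)\partial_{u^i}$.

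To make it precise, let $N$ be the $\mathcal D_1$-part of $\mathcal N_L$ on $\mathcal D_1\times\mathcal D_2$. Use the commuting operators $\mathcal AN=A\circ N$, $\mathcal A'N=N(A\,\cdot,\cdot)$ and $\mathcal BN=N(\cdot,B\,\cdot)$. Then:
\begin{itemize}
\item $N=(\mathcal A-\mathcal B)Y$ with $Y(\partial_{u^i},\partial_{v^r})=A_{v^r}\partial_{u^i}$;
\item the Haantjes combination acts on $N$ as $(\mathcal A-\mathcal A')(\mathcal A-\mathcal B)$;
\item $(\mathcal A-\mathcal A')Y=X$ with $X(\partial_{u^i},\partial_{v^r})=[A,A_{v^r}]\partial_{u^i}$.
\end{itemize}
Hence the $\partial_u$-part of $\mathcal H_L(\partial_{u^i},\partial_{v^j})$ is $\bigl((\mathcal A-\mathcal B)^2X\bigr)(\partial_{u^i},\partial_{v^j})$. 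This is the case $\mathrm m=2$ of Lemma~\ref{lem:5.2}.

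You then invert directly. The operator $\mathcal A-\mathcal B$ has eigenvalues $\lambda-\mu\ne0$ with $\lambda\in\operatorname{spec}A$ and $\mu\in\operatorname{spec}B$, so $X=0$. The paper never inverts this Sylvester-type operator. Instead it shifts $L$ by a constant to make it invertible, and applies Lemma~\ref{lem:5.2} to the Haantjes operator $g(L)=\operatorname{diag}(A,0)$. There the $B$-terms vanish and only $A^2[A,A_{v^r}]=0$ remains. Your version dispenses with the interpolating polynomial $g$, the shift, and the second use of Proposition~\ref{prop:3.3}, at the price of the spectral fact about $\mathcal A-\mathcal B$. The paper's trick reduces everything to invertibility of $A$ alone.

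Two minor points.
\begin{itemize}
\item In the converse, the pure components $\mathcal H_L(\partial_{u^i},\partial_{u^j})$ contain no $v$-derivatives at all. Every field involved lies in $\operatorname{span}\{\partial_u\}$, and brackets of such fields differentiate only in $u$. So these components vanish by (ii) alone; (iii)--(iv) are needed only for the mixed components.
\item Your fallback in Step 2 does work, but not via $p_1(L)^N$. For $\xi,\eta\in\mathcal D_1$, the $\mathcal D_2$-part of $\mathcal H_L(\xi,\eta)$ is $\mathcal K^2\Omega$, where $\Omega(\xi,\eta)=P_2[\xi,\eta]$ is tensorial and
\[
(\mathcal K\Omega)(\xi,\eta)=B^2\Omega(\xi,\eta)+\Omega(A\xi,A\eta)-B\,\Omega(A\xi,\eta)-B\,\Omega(\xi,A\eta).
\]
$\mathcal K$ has eigenvalues $(\mu-\lambda)(\mu-\lambda')\ne0$, so $\Omega=0$. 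Combined with your Step 3, this gives a proof that never uses Proposition~\ref{prop:3.3}.
\end{itemize}
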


\begin{Remark}\label{r1}
\rm{  
The generalised Nijenhuis torsion of level $\mathrm m$, introduced in \cite{Kosmann, TT21}, is defined by the following recursive formulas:
\begin{equation}\label{highhaam}
\begin{aligned}
\mathcal T^{(1)}_L(\xi, \eta) = & \mathcal N_L(\xi, \eta), \\
\mathcal T^{(\mathrm m)}_L(\xi, \eta) = & L^2 \mathcal T^{(\mathrm m - 1)}_L(\xi, \eta) + \mathcal T^{(\mathrm m - 1)}_L(L\xi, L\eta) - \\
& - L\mathcal T^{(\mathrm m - 1)}_L(L\xi, \eta) - L\mathcal T^{(\mathrm m - 1)}_L(\xi, L\eta), \quad \mathrm m = 2, 3, \dots.   
\end{aligned}    
\end{equation}
For $\mathrm m = 2$, this formula coincides with the classical definition of Haantjes torsion $\mathcal H_L = \mathcal T^{(2)}_L$. It turns out that the statement of Theorem \ref{thm:2} still holds, if we assume that $L$ has zero Nijenhuis torsion of level $\mathrm m$, with conditions (iii) and (iv) replaced by 
$$
\mathrm {ad}_A^{\mathrm m - 1} A_{v^j} = 0 \quad \text{and} \quad \mathrm {ad}_B^{\mathrm m - 1} B_{u^i} = 0.
$$
Here the operator $\operatorname{ad}_L: \gl(n) \to \gl(n)$ is defined as $\operatorname{ad}_L M = LM - ML$. 

In particular, assuming that $m=1$ and setting $\mathrm {ad}_A^{0} = \Id$, we obtain the splitting theorem for Nijenhuis operators (see Theorem 3.1 in \cite{nij1}). 
}    
\end{Remark}

As a straightforward corollary of Theorem \ref{thm:2} and Remark \ref{r1}, we obtain the following block diagonalisation theorem for an {\it arbitrary} operator $L$ with $\mathcal T^{(\mathrm m)}_L=0$ in a neighbourhood of an {\it arbitrary} point $\mathsf p\in\mathsf M$.   

\begin{Corollary}\label{cor:2.1}
Let $\mathcal T^{(\mathrm m)}_L=0$, $m\ge 2$. Then in a neighbourhood of each point $\mathsf p\in\mathsf M$, there exists a local coordinate  
system
$$
\underbrace{u^1_1, \dots, u^{m_1}_1}_{u_1}, \underbrace{u^1_2, \dots, u^{m_2}_2}_{u_2}, \dots,  \underbrace{u^1_s, \dots, u^{m_s}_s}_{u_s}
$$
in which $L = \operatorname{diag}(L_1,\dots, L_s)$  with $m_i\times m_i$ blocks $L_i$, $i=1,\dots, s$, such that
\begin{enumerate}
    \item[\rm(1)]  at the point $\mathsf p$, each block  $L_i(\mathsf p)$ has either a single real eigenvalue or a pair of complex conjugate eigenvalues; 
    \item[\rm(2)] $L_i$ is a generalised Nijenhuis operator of level $\mathrm m$ with respect to $u_i$, and $\operatorname{ad}_{L_i}^{\mathrm m-1}  \frac{\partial L_i}{\partial  u_j^\alpha} =0$ for $j\ne i$, $\alpha=1,\dots, m_j$.
    \end{enumerate}
\end{Corollary}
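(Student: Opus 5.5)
The plan is to iterate the generalised splitting theorem (Theorem \ref{thm:2} together with Remark \ref{r1}) and use induction on the number of distinct spectral clusters of $L(\mathsf p)$. Over $\R$, factor the characteristic polynomial at $\mathsf p$ as
\[
\chi_{L(\mathsf p)}(\lambda)=q_1(\lambda)\cdots q_s(\lambda),
\]
where each $q_i$ is either $(\lambda-\mu_i)^{m_i}$ with $\mu_i\in\R$ or $\bigl((\lambda-a_i)^2+b_i^2\bigr)^{m_i/2}$ with $b_i\neq 0$. The real roots $\mu_i$ are distinct from one another, and so are the conjugate pairs. Any two distinct $q_i$ are therefore coprime. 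We proceed by induction on $s$; the case $s=1$ is trivial with $L_1=L$.

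\textbf{First split.} Apply Theorem \ref{thm:2} in its level-$\mathrm m$ version (Remark \ref{r1}) with $p_1=q_1$ and $p_2=q_2\cdots q_s$. This gives coordinates $(u_1,v)$ in which $L=\operatorname{diag}(A,B)$. Here $A$ is of level $\mathrm m$ in $u_1$, $B$ is of level $\mathrm m$ in $v$, and
\[
\operatorname{ad}_A^{\mathrm m-1}A_{v^j}=0,\qquad \operatorname{ad}_B^{\mathrm m-1}B_{u_1^i}=0 .
\]
The characteristic polynomial of $B(\mathsf p)$ is $q_2\cdots q_s$.

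\textbf{Splitting $B$.} For each fixed value of $u_1$, the operator $B(u_1,\cdot)$ is an operator field in $v$ with vanishing level-$\mathrm m$ torsion, so the induction hypothesis applies. The difficulty is that the resulting coordinate change in $v$ must depend smoothly on $u_1$ as a parameter, and the transformed $B$ must still satisfy the commutation condition in $u_1$. I therefore do not apply the induction hypothesis as a black box. I would state the induction claim in the strong form of the corollary itself: $L=\operatorname{diag}(L_1,\dots,L_s)$ with all cross conditions. Then I split $B$'s cluster off one at a time, but always applying Theorem \ref{thm:2} to the whole operator $L$ on $\mathsf M$.

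\textbf{Cleaner route.} Apply Theorem \ref{thm:2} to $L$ once for each $i$, with $p_1=q_i$ and $p_2=\prod_{j\ne i}q_j$. The proof of Theorem \ref{thm:2} presumably shows that the generalised eigendistributions $D_i=\ker q_i(L)$ (defined near $\mathsf p$ via spectral projectors) have integrable sums $\bigoplus_{j\in I}D_j$ for every index set $I$. Integrability of $D_i$ and of $\bigoplus_{j\ne i}D_j$ for each $i$ then gives $s$ foliations $\mathcal F_i$ complementary to the $D_i$. Take $u_i$ to be functions constant along the leaves of $\bigoplus_{j\ne i}D_j$, i.e.\ first integrals of that distribution, $m_i$ of them. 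Since the distributions $\bigoplus_{j\ne i}D_j$ are in general position, the differentials of all $u_i^\alpha$ are independent, so together they form a coordinate system. In these coordinates $\partial_{u_i^\alpha}$ spans $D_i$. Because each $D_i$ is $L$-invariant, $L$ is block diagonal with blocks $L_i$, which gives property (1).

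\textbf{Block conditions.} Property (2) follows by applying the ``moreover'' direction and the computation in Theorem \ref{thm:2} to the pair ($i$-th block, rest). In the coordinates above, $L=\operatorname{diag}(L_i,\widehat L_i)$ is exactly a splitting as in \eqref{form2} with $A=L_i$. Vanishing of $\mathcal T^{(\mathrm m)}_L$ then forces $L_i$ to be of level $\mathrm m$ in $u_i$ and $\operatorname{ad}_{L_i}^{\mathrm m-1}\partial_{u_j^\alpha}L_i=0$ for $j\neq i$. This is the necessity part of the Theorem \ref{thm:2} computation, which only uses the block form.

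\textbf{Main obstacle.} The delicate point is integrability of each $\bigoplus_{j\ne i}D_j$. This is exactly what the proof of Theorem \ref{thm:2} establishes for the factorisation $q_i\cdot\prod_{j\ne i}q_j$, so I would cite it there, noting that these distributions are smooth near $\mathsf p$ by continuity of spectral projectors for coprime factors.
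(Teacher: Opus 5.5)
Your ``cleaner route'' is correct and is essentially the argument the paper leaves implicit when it calls this a straightforward corollary of Theorem~\ref{thm:2} and Remark~\ref{r1}. The proof of Theorem~\ref{thm:2} (Propositions~\ref{prop:3.3} and~\ref{prop:3.4}, applied to the coprime factorisation $q_i\cdot\prod_{j\ne i}q_j$) gives integrability of each $\bigoplus_{j\ne i}D_j$, first integrals of these foliations give the adapted coordinates, and Lemma~\ref{lem:5.2} with the $g(L)=\operatorname{diag}(L_i,0)$ trick, applied to the pair ($L_i$, remaining blocks) in any such block-diagonal coordinates, gives (2), so the induction framing in your first two paragraphs can simply be dropped.
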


\begin{Remark}{\rm  For Nijenhuis operators (i.e., for $\mathrm m=1$),  the result is much stronger (see Theorem 3.2 in \cite{nij1}): $L$ splits into {\it direct sum} of Nijenhuis operators $L_1(u_1), \dots, L_s(u_s)$, i.e. each block $L_i$ depends only on the variables $u_i=(u_i^1,\dots,u_i^{m_i})$ related to this block. 
For generalised Nijenhuis operators $L$ of level $\mathrm m\ge 2$, the  block diagonalisability was proved in \cite{TT22} under the additional condition that the eigenvalues of $L$ are all real and their multiplicities are locally constant, see  \cite[Theorem 40 and Proposition 43]{TT22}.  These results, however, do not explain the dependence of each block on the variables related to other blocks.  Corollary \ref{cor:2.1}  (see item (2))  fills this gap and also allows complex eigenvalues and collisions of eigenvalues within the blocks. In this sense, Corollary \ref{cor:2.1} serves as a universal tool in (generalised) Haantjes geometry that can be applied in the most general situation, for {\it any} operator and at {\it any} point.     
}\end{Remark}

Our main Theorems \ref{thm:A} and \ref{thm:B} follow more or less directly from the above mentioned preliminary results, we explain this in Section \ref{sect:6}. 

Finally, the Appendix establishes a natural link between real Haantjes operators with no real eigenvalues and Haantjes operators on complex manifolds.  Our main (and quite surprising) observation is that, under a mild additional assumption, a $\mathbb C$-linear operator field on a complex manifold $(\mathsf M, J)$ is Haantjes  if and only if  $\mathcal H_L (\partial_{z_\alpha}, \partial_{z_\beta}) = 0$ and $\frac{\partial L}{\partial \bar z_\alpha}$ commutes with $L$  (Theorem \ref{thm:7}).  Furthermore, this property implies that if such an operator $L$ is $\gl$-regular, then   $L=p(\widetilde L)$ where $\widetilde L$ is a holomorphic Haantjes operator (i.e., $\frac{\partial \widetilde L}{\partial \bar z_\alpha}=0$) and $p(\cdot)$ is a polynomial with smooth but not necessarily holomorphic coefficients (Corollary \ref{cor:7.5}).  Recall that  for Nijenhuis operators, the situation is simpler,  since  in the same setting, the vanishing of $\mathcal N_L$  implies that $L$ itself is holomorphic (Theorem 3.3 and Lemma 3.1 in \cite{nij1}).  To the best of our knowledge, in the context of Haantjes geometry, these questions have never been discussed in the literature before.

\section{Preliminaries: basic properties of Haantjes operators}\label{sect:3}

We start with some general facts on Haantjes torsion and Haantjes operators.  
Similar to the Nijenhuis torsion (see Definitions 2.1--2.5 in \cite{nij1}),  the Haantjes torsion can be defined in several equivalent ways.  

\begin{Proposition}\label{prop:3.1}  The  Haantjes torsion $\mathcal H_L$ can be defined
\begin{itemize}
\item As a vector-valued 2-form (standard definition):
$$
\mathcal H_L (\xi,\eta) = 
L^2\mathcal N_L(\xi, \eta ) + \mathcal N_L(L\xi,L\eta ) -  L\Bigl(\mathcal N_L(\xi,L\eta ) +\mathcal N_L(L\xi, \eta )\Bigr). 
$$
In more detail:
\begin{equation}\label{eq:def1}
\begin{aligned}
\mathcal H_L (\xi,\eta) &= 
  L^4[\xi,\eta] - 2L^3\Bigl([L\xi,\eta] + [\xi, L\eta]\Bigr) +  L^2 \Bigl(4 [L\xi, L\eta]  + [L^2\xi,\eta]  + [\xi, L^2\eta] \Bigr) - \\
  &- 2L\Bigl([L^2\xi,L\eta] + [L\xi, L^2\eta]\Bigr) + [L^2\xi, L^2\eta].
\end{aligned}
\end{equation}

\item As a map from `vectors' to `endomorphisms'

\begin{equation}
\label{eq:def2}
\mathcal H_L (\xi, \cdot): \quad  \xi \quad \mapsto \quad    L^2 [L, \mathcal L_\xi L]  - 2L [L,\mathcal L_{L\xi}L]  + [L, \mathcal L_{L^2\xi}L]. 
\end{equation}

\item As a map from `$1$-forms' to `$2$-forms'
$$
\mathcal H_L : \quad \alpha(\cdot) \quad \mapsto \quad  \beta (\cdot, \cdot),
$$
where
\begin{equation}
\label{eq:haant3}
\begin{aligned}
\beta (\cdot, \cdot) &= \ddd\alpha (L^2\cdot, L^2\cdot) - 2\Bigl(\ddd(L^*\alpha) (L^2 \cdot, L\cdot) + \ddd(L^*\alpha) (L \cdot, L^2 \cdot)\Bigr) +\\
&+ 4 \,\ddd ({L^*}^2\alpha) (L\cdot, L\cdot) +  \ddd ({L^*}^2\alpha)(L^2\cdot, \cdot) + \ddd ({L^*}^2\alpha)(\cdot, L^2 \cdot) - \\ 
&- 2 \Bigl(   \ddd({L^*}^3 \alpha)(L\cdot, \cdot) +  \ddd({L^*}^3 \alpha)(\cdot, L\cdot)   \Bigr) +\\
&+ \ddd ({L^*}^4\alpha) (\cdot, \cdot).
\end{aligned}
\end{equation}
\end{itemize}

\end{Proposition}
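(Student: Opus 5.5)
The plan is to verify the three descriptions of $\mathcal H_L$ in turn, each obtained from the previous one by routine tensor algebra. First, for \eqref{eq:def1}, I will substitute the definition \eqref{nij_m} of $\mathcal N_L$ into the four terms of \eqref{haa_m}. This produces sixteen bracket terms, which I will group by the power of $L$ in front. There are four groups:
\begin{itemize}
\item $L^4[\xi,\eta]$ comes only from $L^2\mathcal N_L(\xi,\eta)$.
\item The $L^3$-terms come from $L^2\mathcal N_L(\xi,\eta)$ and from $-L\mathcal N_L(L\xi,\eta)-L\mathcal N_L(\xi,L\eta)$.
\item The coefficient $4$ of $L^2[L\xi,L\eta]$ arises as $1+1+2$: one from $L^2\mathcal N_L(\xi,\eta)$, one from $\mathcal N_L(L\xi,L\eta)$, and one from each of the two mixed terms.
\item The remaining terms $L^2([L^2\xi,\eta]+[\xi,L^2\eta])$, $-2L([L^2\xi,L\eta]+[L\xi,L^2\eta])$ and $[L^2\xi,L^2\eta]$ are collected in the same way.
\end{itemize}
This step is pure bookkeeping.

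Second, for \eqref{eq:def2}, I will use the identity $[L\xi,\eta]-L[\xi,\eta]=-(\mathcal L_\eta L)\xi$, or equivalently the known formula from \cite{nij1}, $\mathcal N_L(\xi,\eta)=[L,\mathcal L_\xi L]\,\eta$ up to sign convention, more precisely $\mathcal N_L(\xi,\cdot)=(\mathcal L_{L\xi}L - L\mathcal L_\xi L)$. I plan to rewrite each bracket in \eqref{eq:def1} with $\xi$ in the first slot as a Lie derivative of $L$ or of its powers along $\xi$, $L\xi$, $L^2\xi$, applied to $\eta$. Then I will use the Leibniz rule $\mathcal L_\zeta(L^k)=\sum L^i(\mathcal L_\zeta L)L^{k-1-i}$ to reduce everything to $\mathcal L_\zeta L$ with $\zeta\in\{\xi,L\xi,L^2\xi\}$.

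The cleanest route is to regroup the expression as $L^2 N_\xi - L N_{L\xi}\cdot(\ldots)$. Concretely, write $N_\xi:=\mathcal N_L(\xi,\cdot)=[L,\mathcal L_\xi L]$-type endomorphisms, and then use $\mathcal N_L(L\xi,L\eta)=N_{L\xi}L\eta$, etc. Combining, one obtains $L^2N_\xi+N_{L\xi}L - L N_{L\xi} - LN_\xi L$. Substituting $N_\xi=\mathcal L_{L\xi}L-L\mathcal L_\xi L$ and $N_{L\xi}=\mathcal L_{L^2\xi}L-L\mathcal L_{L\xi}L$, and expanding, the terms organize into commutators and give exactly $L^2[L,\mathcal L_\xi L]-2L[L,\mathcal L_{L\xi}L]+[L,\mathcal L_{L^2\xi}L]$ up to an overall sign fixed by convention. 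I expect this regrouping, together with keeping the sign conventions consistent with \cite{nij1}, to be the main point requiring care.

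Third, for \eqref{eq:haant3}, I will dualize \eqref{eq:def1}. The starting point is that $\mathrm d\alpha(X,Y)=X\alpha(Y)-Y\alpha(X)-\alpha([X,Y])$, so $\alpha(L^k[X,Y])=({L^*}^k\alpha)([X,Y])$. I will pair each term of \eqref{eq:def1} with $\alpha$, for example $\alpha(L^2[L\xi,L\eta])=({L^*}^2\alpha)([L\xi,L\eta])$, and replace $\beta([X,Y])$ by $-\mathrm d\beta(X,Y)+X\beta(Y)-Y\beta(X)$. The derivative terms $X\beta(Y)-Y\beta(X)$ must then cancel in total. This cancellation holds because the expression is tensorial: the sums of $\alpha$ applied to the relevant endomorphism combinations of $\xi,\eta$ coincide, since powers of $L$ commute. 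For instance, the terms $L\xi\,(\alpha L^3\eta)$ and the like group into the total weight-4 symmetric pattern $(1,-2,1)^{*2}$ and cancel.

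Matching the $\mathrm d$-terms with the coefficients $1,-2,4,1,1,-2,1$ then gives \eqref{eq:haant3}, up to the overall sign $\beta=-\alpha\circ\mathcal H_L$, which is irrelevant for the definition.
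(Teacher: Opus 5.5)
Your plan is correct and is exactly the direct computation that the paper calls straightforward and omits. With the paper's conventions, both signs you leave open equal $-1$: $\mathcal H_L(\xi,\cdot)=L[L,N_\xi]-[L,N_{L\xi}]=-\bigl(L^2[L,\mathcal L_\xi L]-2L[L,\mathcal L_{L\xi}L]+[L,\mathcal L_{L^2\xi}L]\bigr)$, and $\alpha\circ\mathcal H_L=-\beta$. In the latter, the derivative terms cancel simply because the coefficient of $L^{4-j-k}[L^j\xi,L^k\eta]$ in \eqref{eq:def1} is $c_jc_k$ with $c=(1,-2,1)$, and $c_0+c_1+c_2=0$.

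The only slip is your aside that $\mathcal N_L(\xi,\cdot)=[L,\mathcal L_\xi L]$ up to sign. That is false, since the right-hand side is not even $C^\infty$-linear in $\xi$. You immediately replace it with the correct $\mathcal L_{L\xi}L-L\,\mathcal L_\xi L$, and that is all your computation uses.
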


The verification of the above formulas for $\mathcal H_L$ is straightforward, and we omit it.

Assume that a Haantjes operator $L$ has a block triangular form $L=\begin{pmatrix} L_1 & C \\ 0 & L_2\end{pmatrix}$. This is the same as saying that we have an $L$-invariant foliation $\mathcal F$ and consider a coordinate system $(u,v)$ adapted to $\mathcal F$, so that $u_i$'s are coordinates on the leaves of $\mathcal F$, while $v_j$'s parametrise these leaves, i.e., are coordinates on the local quotient space $\mathsf M/\mathcal F$.   In this setting we have the following fact  (cf. Proposition 2.4 in \cite{nij1}).

\begin{Proposition}\label{prop:3.2}
\begin{enumerate}
\item[$(1)$] For each fixed $v$, the block $L_1(u,v)$  is a Haantjes operator on the $\mathcal F$-leaf corresponding to $v$.
\item[$(2)$]  If the block $L_2$ depends on $v$ only, then $L_2(v)$ is a Haantjes operator on the local quotient space $\mathsf M/\mathcal F$. 
\end{enumerate}
\end{Proposition}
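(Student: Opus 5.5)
The plan is to work directly with the explicit formula \eqref{eq:def1}, which expresses $\mathcal H_L(\xi,\eta)$ through Lie brackets of the fields $L^a\xi, L^b\eta$ ($0\le a,b\le 2$) with powers of $L$ applied on the left. Both statements follow from one observation: each term in \eqref{eq:def1} is a bracket of vector fields built from $L$, followed by an application of a polynomial in $L$. The block triangular structure is compatible with both operations, so the two statements will be read off from the appropriate components of $\mathcal H_L$.

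For $(1)$ we take $\xi,\eta$ tangent to the leaves of $\mathcal F$. In adapted coordinates these are fields of the form $\xi=\xi^i(u,v)\partial_{u^i}$. Since $L$ preserves $T\mathcal F$ and $L|_{T\mathcal F}=L_1(u,v)$, every $L^a\xi$ is again tangent to $\mathcal F$ and equals $L_1^a\xi$ in the $u$-components. The same holds for $\eta$. The Lie bracket of two fields tangent to $\mathcal F$ is tangent to $\mathcal F$. It is computed leafwise, because $v$ enters only as a parameter and there are no $\partial_v$-components to differentiate along. Applying powers of $L$ to a field tangent to $\mathcal F$ is the same as applying the corresponding powers of $L_1$. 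Hence, restricted to a leaf $\{v=\mathrm{const}\}$, we get $\mathcal H_L(\xi,\eta)=\mathcal H_{L_1}(\xi,\eta)$, where $L_1$ is viewed as an operator on that leaf. Thus $\mathcal H_L=0$ gives $\mathcal H_{L_1}=0$ on each leaf.

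For $(2)$ we consider the projection $\pi:(u,v)\mapsto v$ and take $\xi,\eta$ to be lifts $\xi=\xi^j(v)\partial_{v^j}$ of vector fields on the quotient. Here $L\xi$ is not projectable in general, since it has a $u$-component $C\xi$ that may depend on $u$. It is, however, $\pi$-related to $L_2(v)\bar\xi$: its $v$-component is $L_2(v)\bar\xi$, which depends on $v$ only. The same holds for all $L^a\xi$, whose $v$-component is $L_2^a\bar\xi$. We use two facts. First, if $X$ is $\pi$-related to $\bar X$ and $Y$ is $\pi$-related to $\bar Y$, then $[X,Y]$ is $\pi$-related to $[\bar X,\bar Y]$. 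Second, since $L$ maps $\ker d\pi=T\mathcal F$ into itself, $d\pi\circ L = L_2\circ d\pi$. Applying $d\pi$ to \eqref{eq:def1} term by term then gives
$$
d\pi\,\mathcal H_L(\xi,\eta)=\mathcal H_{L_2}(\bar\xi,\bar\eta).
$$
Therefore $\mathcal H_L=0$ implies $\mathcal H_{L_2}=0$ on $\mathsf M/\mathcal F$.

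The main point needing care is in $(2)$. Intermediate fields such as $L\xi$ are not tangent to any nice subbundle and carry uncontrolled $u$-components, so the argument must rely only on $\pi$-relatedness. It must never assume that these fields are projectable lifts with zero $u$-part. This is exactly where the hypothesis that $L_2$ depends on $v$ only is used: it makes $L_2^a\bar\xi$ a genuine field on the quotient. In $(1)$ the only subtlety is that brackets of leaf-tangent fields involve no $v$-derivatives, which is immediate. Tensoriality of $\mathcal H_L$ lets us test it on these special fields, since at each point they span $T\mathcal F$ in case $(1)$ and a complement of $T\mathcal F$ in case $(2)$.
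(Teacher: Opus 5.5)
Your proof is correct and complete. The paper gives no separate proof: it states the result as the Haantjes analogue of Proposition~2.4 in \cite{nij1}, i.e.\ a routine check from the definition. Your argument supplies exactly such a check: restriction to the leaves for (1), and for (2) the $\pi$-relatedness of brackets together with $\mathrm{d}\pi\circ L=L_2\circ\mathrm{d}\pi$, applied term by term to \eqref{eq:def1}. One wording point only: $L\xi$ \emph{is} projectable, since it is $\pi$-related to $L_2\bar\xi$, and this is precisely where $L_2=L_2(v)$ is used. What fails is only that its $u$-component vanishes, and your proof rightly relies on $\pi$-relatedness alone.
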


The next  fact is a fundamental property of operators with vanishing Nijenhuis torsion of arbitrary level $\mathrm m\ge 2$.  For Haantjes operators ($\mathrm m=2$), it is due to O.\,Bogoyavlenskiy \cite{bg},  the general case was treated by D.R.\,Nozaleda et al. in \cite{RTT23}.

\begin{Proposition}[Theorem 3 in \cite{RTT23}]\label{prop:3.3}
Let $L$ be an operator field such that $\mathcal T^{(\mathrm m)}_L = 0$ for $\mathrm m \geq 2$. Then for any operator field $g(L) = g_1 L^{r - 1} + \dots + g_r \Id$ with arbitrary functional coefficients $g_i$, one has $\mathcal T^{(\mathrm m)}_{g(L)} = 0$.
\end{Proposition}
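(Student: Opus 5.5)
We need to prove Proposition \ref{prop:3.3}: if $\mathcal T^{(\mathrm m)}_L=0$, then $\mathcal T^{(\mathrm m)}_{g(L)}=0$ for $g(L)=\sum g_i L^{r-i}$ with functional coefficients.

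The argument combines pointwise algebra with the transformation rule of the Nijenhuis torsion under such polynomial changes. First I would rewrite $\mathcal T^{(\mathrm m)}_L$ via operators on vector-valued $2$-forms. For a vector-valued $2$-form $T$ and operators $A,B$ set
$$(\Phi_{A,B}T)(\xi,\eta)=AB\,T(\xi,\eta)+T(A\xi,B\eta)-A\,T(\xi,B\eta)-B\,T(A\xi,\eta),$$
symmetrised in $A,B$ where needed. The recursion \eqref{highhaam} reads $\mathcal T^{(\mathrm m)}_L=\Phi_{L,L}^{\mathrm m-1}\mathcal N_L$. The key algebraic fact is
$$(\Phi_{L,L}T)(\xi,\eta)=(L-\lambda)(L-\mu)T(\xi,\eta)$$
when $\xi,\eta$ are (generalised-free) eigenvectors with $L\xi=\lambda\xi$, $L\eta=\mu\eta$. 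More generally, $\Phi$ acts on $\mathrm{Hom}(\Lambda^2V,V)$ as a product of commuting operators built from left and right multiplications by $L$.

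Next, I would compute $\mathcal N_{g(L)}$ in terms of $\mathcal N_L$. Using $\mathcal N_{fA}$ and $\mathcal N_{A+B}$ formulas, together with the known identity that $\mathcal N_{L^k}$ is a combination $\sum L^a\mathcal N_L(L^b\cdot,L^c\cdot)$, one gets
$$\mathcal N_{g(L)}(\xi,\eta)=\sum_{a,b,c}c_{abc}\,L^a\mathcal N_L(L^b\xi,L^c\eta)+\text{terms with }dg_i.$$
The $dg_i$ terms have the form $(\xi(g_i))\,[g(L),L^{r-i}]\eta$-type expressions. More precisely they are
$$d g_i(g(L)\xi)L^{r-i}\eta-d g_i(\xi)\,g(L)L^{r-i}\eta-(\xi\leftrightarrow\eta),$$
which combine into $\bigl(g(L)-\text{scalar}\bigr)$ structures. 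All terms lie in the algebra generated by left and right actions of $L$ applied to $\mathcal N_L$, plus the derivative terms of the form $\alpha(A\xi)B\eta-\alpha(\xi)AB\eta$.

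Third, I would show that $\Phi_{g(L),g(L)}$ commutes with and factors through $\Phi_{L,L}$ on the first type of terms. Complexify and work pointwise in a Jordan basis, treating bilinear expressions via the tensor algebra. Then $\Phi_{g,g}$ corresponds to multiplying by $(g(x)-g(y))(g(x)-g(z))$ in commuting variables $x$ (output), $y,z$ (inputs), while $\Phi_{L,L}$ is $(x-y)(x-z)$. Since $g(x)-g(y)=(x-y)h(x,y)$, we get $\Phi_{g,g}^{\mathrm m-1}=\Phi_{L,L}^{\mathrm m-1}\circ(\text{polynomial in }L\text{-actions})$. Commuting polynomial actions past each other annihilates $\Phi_{L,L}^{\mathrm m-1}\mathcal N_L(L^b\cdot,L^c\cdot)$-type terms, since each such term is again a polynomial action on $\mathcal N_L$. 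For the derivative terms, write them as $\alpha\otimes$-expressions. In the variable picture, $\alpha(A\xi)B\eta-\alpha(\xi)AB\eta$ with $A=g(L)$ and $B=L^{r-i}$ corresponds to $(g(y)-g(x))$-type factors. A single application of $\Phi_{g,g}$ already produces $(g(x)-g(y))(g(x)-g(z))$ times something vanishing on the diagonal $x=y$. I would check that one factor kills it, because the term is supported where output and one input coincide.

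The main obstacle is this last step: rigorously justifying the "commutative variables" calculus when $L$ has Jordan blocks, and handling the derivative terms so that $\Phi_{g,g}$ indeed annihilates them. I would first try to make it precise by identifying $\mathrm{Hom}(V\otimes V,V)\cong V\otimes V^*\otimes V^*$ with $L$ acting on the three factors by commuting operators $X,Y,Z$. In this picture $\Phi_{A,B}$ becomes $(X_A-Y_A)(X_B-Z_B)$ antisymmetrised. Derivative terms are $\alpha\otimes$ (an element of $V\otimes V^*$) and are killed by $X-Y$-type factors, because the $V\otimes V^*$ element commutes with $L$ and so is annihilated by $X-Z$ acting on it.
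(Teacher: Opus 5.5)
Your argument is correct. The paper gives no proof of Proposition~\ref{prop:3.3}; it quotes the result (Bogoyavlenskij for $\mathrm m=2$, Reyes Nozaleda--Tempesta--Tondo in general), so your route is a self-contained alternative.

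\textbf{Why it works.} On tensors the recursion \eqref{highhaam} reads $\mathcal T^{(\mathrm m)}_L=(X-Y)^{\mathrm m-1}(X-Z)^{\mathrm m-1}\mathcal N_L$. Here $X,Y,Z$ are the commuting pointwise actions of $L$ on the output and on the two arguments. Write $g(x)-g(y)=(x-y)h(x,y)$, with the coefficients frozen at the point. Then $\mathcal N_{g(L)}=h(X,Y)h(X,Z)\,\mathcal N_L+D$, where $D$ is the $dg_i$-part; your explicit formula for $D$ is right. A single $\Phi_{g(L),g(L)}$ annihilates $D$.

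\textbf{What your route buys.} The citation buys brevity. Your argument is self-contained and proves more than the statement: for $\mathrm m\ge 2$ you get the identity $\mathcal T^{(\mathrm m)}_{g(L)}=\bigl(h(X,Y)h(X,Z)\bigr)^{\mathrm m}\,\mathcal T^{(\mathrm m)}_L$, which contains for instance $\mathcal H_{fL+c\,\Id}=f^4\mathcal H_L$. It also shows transparently why $\mathrm m\ge 2$ is needed. For $\mathrm m=1$ the term $D$ survives, consistent with the fact that $g(L)$ with non-constant coefficients is in general not Nijenhuis.

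The obstacle you worry about, Jordan blocks, does not arise. In $V\otimes V^*\otimes V^*$ everything is a polynomial identity in commuting operators, so you need no eigenbasis and no complexification.

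\textbf{Two places to tighten.}

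First, the non-derivative part of $\mathcal N_{g(L)}$ contains mixed terms coupling $L^{a}$ and $L^{b}$. The identity for $\mathcal N_{L^k}$ alone therefore does not suffice. The clean fix:
\begin{itemize}
\item This part equals $\sum_{i,j}g_ig_j\,(X^{a_i}-Y^{a_i})(X^{a_j}-Z^{a_j})\beta$, with $a_i=r-i$ and $\beta(\xi,\eta)=[\xi,\eta]$ viewed as an $\mathbb R$-bilinear map.
\item On such maps, $X,Y,Z$ and multiplication of the output by functions still commute, and $(X-Y)(X-Z)\beta=\mathcal N_L$.
\item Factoring $X^a-Y^a=(X-Y)q_a(X,Y)$ then gives exactly $h(X,Y)h(X,Z)\,\mathcal N_L$.
\end{itemize}

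Second, the sentence about ``something vanishing on the diagonal $x=y$'' is muddled. The correct mechanism is the one in your last paragraph:
\begin{itemize}
\item Write $D=D_1-D_1^{\mathrm{swap}}$ with $D_1=\sum_i\bigl(dg_i\circ g(L)\bigr)\otimes L^{r-i}-dg_i\otimes g(L)L^{r-i}$.
\item The endomorphism factor occupies the (output, second argument) slots and commutes with $L$. Hence $g(X)-g(Z)$ kills $D_1$, and $g(X)-g(Y)$ kills its swap.
\item One $\Phi_{g(L),g(L)}=\bigl(g(X)-g(Y)\bigr)\bigl(g(X)-g(Z)\bigr)$ contains both factors, so $\Phi^{\mathrm m-1}_{g(L),g(L)}D=0$ as soon as $\mathrm m\ge 2$.
\end{itemize}
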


\begin{Proposition}\label{prop:3.4}
Let $L$ be an operator field such that $\mathcal T^{(\mathrm m)}_L = 0$ for $\mathrm m \geq 2$. Assume that the dimension of $\operatorname{Ker} L$ is locally constant and $\operatorname{Ker} L^2 = \operatorname{Ker} L$. Then the distribution $\mathcal D = \operatorname{Ker} L$ is integrable. \end{Proposition}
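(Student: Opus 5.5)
The plan is to evaluate the generalised torsion $\mathcal T^{(\mathrm m)}_L$ on two vector fields taken from $\mathcal D=\operatorname{Ker}L$, show that it reduces to a high power of $L$ applied to their Lie bracket, and then use the hypothesis $\operatorname{Ker}L^2=\operatorname{Ker}L$ to conclude that the bracket lies in $\mathcal D$. Frobenius' theorem then gives integrability.

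\emph{Setup.} Since $\dim\operatorname{Ker}L$ is locally constant, $\mathcal D$ is a smooth distribution of constant rank. By Frobenius' theorem it therefore suffices to show that $[\xi,\eta]\in\mathcal D$ for any two vector fields $\xi,\eta$ with $L\xi=L\eta=0$.

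\emph{Step 1: collapse the recursion.} Each $\mathcal T^{(k)}_L$ is a tensor, i.e. $C^\infty$-bilinear. This holds for $k=1$ because $\mathcal N_L$ is a tensor, and the recursion \eqref{highhaam} only composes $\mathcal T^{(k-1)}_L$ with $L$. Consequently, in \eqref{highhaam} evaluated on $\xi,\eta\in\mathcal D$, the three terms
$$
\mathcal T^{(k-1)}_L(L\xi,L\eta),\qquad L\,\mathcal T^{(k-1)}_L(L\xi,\eta),\qquad L\,\mathcal T^{(k-1)}_L(\xi,L\eta)
$$
all vanish, since one of their arguments is the zero field. Hence $\mathcal T^{(k)}_L(\xi,\eta)=L^2\,\mathcal T^{(k-1)}_L(\xi,\eta)$, and by induction
$$
\mathcal T^{(\mathrm m)}_L(\xi,\eta)=L^{2(\mathrm m-1)}\,\mathcal N_L(\xi,\eta).
$$
For the base case, \eqref{nij_m} gives $\mathcal N_L(\xi,\eta)=L^2[\xi,\eta]$, because $[L\xi,L\eta]$, $[L\xi,\eta]$ and $[\xi,L\eta]$ are brackets with the zero field. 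Therefore
$$
0=\mathcal T^{(\mathrm m)}_L(\xi,\eta)=L^{2\mathrm m}[\xi,\eta].
$$

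\emph{Step 2: pass from $\operatorname{Ker}L^{2\mathrm m}$ to $\operatorname{Ker}L$.} This is pointwise linear algebra. We show $\operatorname{Ker}L^{k}=\operatorname{Ker}L$ for all $k\ge1$ by induction on $k$. Suppose it holds for $k\ge2$, and let $L^{k+1}x=0$. Then $L^{k-1}x\in\operatorname{Ker}L^2=\operatorname{Ker}L$, so $L^kx=0$, and the induction hypothesis gives $x\in\operatorname{Ker}L$. Applying this with $k=2\mathrm m$ yields $[\xi,\eta]\in\operatorname{Ker}L=\mathcal D$, so $\mathcal D$ is involutive and hence integrable.

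The only point requiring care, and the nearest thing to an obstacle, is Step 1. There we must justify that the higher torsions are genuinely tensorial, so that substituting $L\xi=0$ kills the mixed terms even though $\xi$ is merely a field with values in the kernel. Without the condition $\operatorname{Ker}L^2=\operatorname{Ker}L$ the argument would only give $[\xi,\eta]\in\operatorname{Ker}L^{2\mathrm m}$, which is why that hypothesis is needed.
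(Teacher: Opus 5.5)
Your proof is correct and follows the paper's argument exactly. The paper likewise shows by induction that $\mathcal T^{(\mathrm m)}_L(\xi,\eta)=L^{2\mathrm m}[\xi,\eta]$ for $\xi,\eta\in\operatorname{Ker}L$, and then uses $\operatorname{Ker}L^{2\mathrm m}=\operatorname{Ker}L$. Your appeal to tensoriality is harmless but not actually needed: $L\xi$ and $L\eta$ are identically zero vector fields, so every term containing them vanishes by $\mathbb{R}$-linearity alone.
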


\begin{proof}
First, let us show that for $\xi, \eta \in \mathcal D = \operatorname{Ker} L$, we have
$\mathcal T^{(\mathrm m)}_L (\xi, \eta)  = L^{2 \mathrm m} [\xi, \eta]$.
We prove it by induction. For $\mathrm m = 1$, we have
$$
\mathcal N_L(\xi, \eta) = L^2[\xi, \eta] + [L\xi, L\eta] - L[L\xi, \eta] - L[\xi, L\eta] = L^2[\xi, \eta].
$$
Next, if  $\mathcal T^{(\mathrm k)}_L (\xi, \eta)  = L^{2 \mathrm k} [\xi, \eta]$, then 
$$
\begin{aligned}
\mathcal T^{(\mathrm k + 1)}_L (\xi, \eta) & = L^2 \mathcal T^{(\mathrm k)}_L (\xi, \eta) + \mathcal T^{(\mathrm k)}_L (L \xi, L\eta) - L\,\mathcal T^{(\mathrm k)}_L (L\xi, \eta) - \\
& - L\,\mathcal T^{(\mathrm k)}_L (\xi, L\eta) = L^2 \mathcal T^{(\mathrm k)}_L (\xi, \eta) = L^{2\mathrm k + 2}[\xi, \eta],
\end{aligned}
$$
as required.   Hence,  if $\xi, \eta \in \operatorname{Ker} L$, then $[\xi,\eta] \in \operatorname{Ker} L^{2\mathrm m} = \operatorname{Ker} L$, implying the integrability of $\mathcal D = \operatorname{Ker} L$.
\end{proof}


\section{Proof of Theorems \ref{thm:1} and \ref{thm:1b}}\label{sect:4}

The proof of Theorem \ref{thm:1} consists of two parts presented in Sections \ref{subsec:4.1} and \ref{subsec:4.2}.  

\subsection{Triangularisation Lemma}\label{subsec:4.1}

\begin{Theorem}[Triangularization Lemma]\label{thm:5} 
Let $L$ be a Haantjes operator which is pointwise similar to the nilpotent Jordan $n\times n$ block. Then

\begin{itemize}
\item[\rm (i)]  the distributions
$
\mathcal D_k := \operatorname{Ker} L^{k} = \operatorname{Image} L^{n-k}  
$
are all integrable;
\item[\rm (ii)] by an appropriate coordinate transformation, $L$ can be reduced to a strictly upper triangular form.
\end{itemize}

\end{Theorem}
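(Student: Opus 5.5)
The plan is to prove (i) first and then derive (ii) from it by a standard straightening argument. Throughout, $\mathcal D_0=0\subset \mathcal D_1\subset\dots\subset\mathcal D_n=T\mathsf M$ is a flag of smooth distributions of constant rank $k$. Since $L$ is pointwise a nilpotent Jordan block, we have $L\mathcal D_k=\mathcal D_{k-1}$. Moreover $\operatorname{Ker}L^k=\operatorname{Image}L^{n-k}$, so $\mathcal D_k$ is spanned by $L^{n-k}X$ for arbitrary vector fields $X$.

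For (i) I would prove the stronger statement $[\mathcal D_i,\mathcal D_j]\subset \mathcal D_{\max(i,j)}$ for all $i,j$, by induction on $i+j$. The case $i=j=1$ is trivial because $\mathcal D_1$ is one-dimensional. For the inductive step take $\xi\in\mathcal D_i$, $\eta\in\mathcal D_j$ and expand $\mathcal H_L(\xi,\eta)=0$ using \eqref{eq:def1}. Every bracket other than $L^4[\xi,\eta]$ involves at least one of $L\xi\in\mathcal D_{i-1}$ or $L\eta\in\mathcal D_{j-1}$. By the induction hypothesis these brackets lie in smaller flag members, and multiplying them by the powers of $L$ in front shifts them down further. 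So modulo $\mathcal D_{\max(i,j)-4}$ (or similar) we get $L^4[\xi,\eta]\equiv 0$. This alone only yields $[\xi,\eta]\in\mathcal D_{\max(i,j)+4}$, which is too weak. This is exactly the phenomenon already visible in the proof of Proposition~\ref{prop:3.4}: there $\operatorname{Ker}L^2=\operatorname{Ker}L$ was needed, and it fails here.

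To upgrade, I would use the freedom in Proposition~\ref{prop:3.3}: the operators $L+c\,\Id$, and more generally polynomials $g(L)$, are Haantjes too. I would also use the fact that $\mathcal H_L$ is tensorial, so I may choose $\xi=L^{n-i}X$, $\eta=L^{n-j}Y$ with $X,Y$ arbitrary. I then apply the Haantjes identity to the pairs $(L^aX,L^bY)$ for all $a,b$. Written via \eqref{eq:def2} or via the dual form \eqref{eq:haant3}, these give a triangular system of relations. The dual form is applied with $\alpha$ running over the annihilators $\operatorname{Ker}(L^*)^{s}$ of $\mathcal D_{n-s}$. In this system, the unknown components of $[L^{a}X,L^bY]$ modulo the flag appear with coefficients that are binomial-type combinations; compare the expansion $\mathcal H_L=\sum$ of terms with weights $1,-2,4,1,1,-2,-2,1$ in \eqref{eq:def1}.

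The key computation is to show that the lowest-order term of this system has a nonzero coefficient, so that the system can be solved step by step. In dual language the argument runs by downward induction on $s$. For $\alpha\in\operatorname{Ker}(L^*)^s$ all terms of \eqref{eq:haant3} with ${L^*}^r\alpha$, $r\ge s$, vanish, and the remaining terms are controlled by the induction hypothesis. The claim to extract is that $\ddd\alpha$ vanishes on $\mathcal D_{n-s}\times\mathcal D_{n-s}$, which is the Frobenius condition. I expect this nondegeneracy/bookkeeping step to be the main obstacle. If a single choice of $\alpha$ does not isolate the needed term, I would combine it with $L\mapsto L+c\,\Id$ (which changes nothing in the flag) and take suitable linear combinations in $c$ to separate the terms.

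For (ii), once all $\mathcal D_k$ are integrable, I would use the standard fact that a flag of nested integrable distributions of constant ranks admits simultaneously adapted coordinates $x^1,\dots,x^n$ with $\mathcal D_k=\operatorname{span}(\partial_{x^1},\dots,\partial_{x^k})$. This is proved by induction, choosing coordinates constant along the leaves of each $\mathcal D_k$ successively. In such coordinates the relation $L\mathcal D_k\subset\mathcal D_{k-1}$ says precisely that $L\partial_{x^k}\in\operatorname{span}(\partial_{x^1},\dots,\partial_{x^{k-1}})$, i.e. that $L$ is strictly upper triangular.
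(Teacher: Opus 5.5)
\textbf{There is a genuine gap in (i).} Your derivation of (ii) from (i) is fine. For (i), however, the proposal stops exactly where the content of the theorem lies. You correctly see that the naive induction only puts $[\mathcal D_i,\mathcal D_j]$ into a flag member some steps above $\mathcal D_{\max(i,j)}$, but you then defer the decisive ``nondegeneracy/bookkeeping'' step, and the devices you offer for it cannot supply it.
\begin{itemize}
\item For constant $c$ one has $\mathcal H_{L+c\Id}=\mathcal H_L$ identically. The reason is that $\mathcal H_L(\xi,\eta)$ is obtained by applying $(L-L_1)^2(L-L_2)^2$ to $[\xi,\eta]$, where $L_1,L_2$ act on the arguments and $L$ acts on the output, and the $c$'s cancel in $L-L_1$ and $L-L_2$. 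So linear combinations in $c$ give no new equations.
\item Likewise, $\mathcal H_{g(L)}=0$ is a consequence of $\mathcal H_L=0$ and adds nothing.
\item The dual downward induction does not work as described. For $\alpha\in\operatorname{Ker}L^*$, formula \eqref{eq:haant3} collapses to $\ddd\alpha(L^2\cdot,L^2\cdot)=0$. This only sees $\operatorname{Image}L^2=\mathcal D_{n-2}$, so the Frobenius condition for the hyperplane field $\mathcal D_{n-1}$ cannot be extracted from the equation for $\alpha$ itself.
\item For $\alpha\in\operatorname{Ker}(L^*)^s$, the surviving terms involve $\ddd(L^{*r}\alpha)$, where $L^{*r}\alpha$ annihilates the larger distributions $\mathcal D_{n-s+r}$. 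A downward induction on $s$ has not yet controlled these, so the claim that they are ``controlled by the induction hypothesis'' is false.
\end{itemize}

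\textbf{The missing idea.} Take a frame with $Le_i=e_{i-1}$ and write $[e_i,e_j]=\sum_r c^r_{ij}e_r$. The obstructing quantities are the $c^r_{ij}$ with $r>\max(i,j)$. They satisfy a coupled homogeneous linear system coming from other components of $\mathcal H_L$. This system forces them to vanish only because of the skew-symmetry $c^r_{ij}=-c^r_{ji}$ combined with the specific numerical weights of the Haantjes torsion.

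Already for $n=3$ this is visible. Integrability of $\mathcal D_2$ means $c^3_{12}=0$. The equation for the annihilating covector $e^3$ is vacuous, since $L^2$ has rank one. The information comes instead from the $e_1$-component of $\mathcal H_L(e_2,e_3)$, which equals $4c^3_{12}+c^3_{21}=3c^3_{12}$. The conclusion hinges on $4\neq 1$, not on any triangular or inductive structure.

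The paper handles the general case as follows:
\begin{itemize}
\item It rewrites $\mathcal H_L=0$ as the recursion $L^2C_k-2LC_{k-1}+C_{k-2}=0$ for $C_k=[L,[B_k,L]]$, where $B_k$ are the structure matrices of the Jordan frame.
\item It solves this recursion explicitly and reconstructs the relevant entries of the $B_k$.
\item It uses skew-symmetry to compare the two resulting expressions for the same structure constant, e.g.\ $2\beta_3=\tfrac45\beta_3$, which kills all obstructing entries.
\end{itemize}
Without a computation of this kind, your argument for (i) is incomplete.
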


\begin{proof}
The second statement (ii)  follows automatically from the integrability of the distributions $\mathcal D_k$.

The  integrability of $\mathcal D^k$ (item (i) of Theorem  \ref{thm:5}) will follow from the analysis of the linear equation systems on the partial derivatives $\frac{\partial L^i_j}{\partial x_k}$ given by the Haantjes relations.  

Consider a linearly independent collection of smooth vector fields $e_1, \dots, e_n$ (moving frame)  such that
$$
L e_i = e_{i-1}, \quad i=2,\dots,n, \quad \mbox{and} \quad L e_1 = 0.
$$ 
Equivalently, this means that the matrix of $L$ in the basis $e_1, \dots, e_n$ is the standard nilpotent Jordan block (with $1$'s  above the diagonal) and
$\mathcal D_{k} = \operatorname{Span} (e_1, \dots, e_k)$.  All the matrix computations below will be done in the basis $e_1, \dots, e_n$.  The integrability of the distributions $\mathcal D_{k}$ is equivalent to the fact that the Lie bracket of $[e_i, e_j]$ is a linear combination of $e_\alpha$'s with $\alpha \le \max\{i,j\}$.  Our goal is to verify this condition.

To that end, we introduce the matrices $B_1, B_2, \dots, B_n$ defined by
\begin{equation}
\label{eq:defB}    
(B_k)^i_j = c^i_{kj}\quad\mbox{for}\quad [e_k, e_j]= \sum c_{kj}^ie_i. 
\end{equation}

Formula  \eqref{eq:def2} for the Haantjes torsion imposes the following matrix relations on  $B_1, B_2, \dots, B_n$.

\begin{Lemma}\label{lem:4.1}
In terms of $B_1, B_2, \dots, B_n$, the Haantjes relations $\mathcal H_L\equiv 0$ take the following form
\begin{equation}
\label{eq:haanB}
L^2 [L,[B_i,L]] - 2 L[L, [B_{i-1}, L]] + [L, [B_{i-2}, L]] = 0
\end{equation}
where $B_k=0$ for $k\le 0$.  
\end{Lemma}

\begin{proof}
Straightforward verification based on formula \eqref{eq:def2} for $\mathcal H_L$. 
\end{proof}

Introducing the matrices $C_i =  [L,[B_i,L]]$, we  rewrite \eqref{eq:haanB} in the form
\begin{equation}
\label{eq:haanC}
\begin{aligned}
L^2  C_1 &= 0, \\
L^2  C_2 - 2 LC_1 &= 0, \\
L^2  C_3 - 2 LC_2 + C_1 &= 0, \\
L^2 C_4 - 2 LC_3 + C_2 &= 0, \\
& \dots \\
L^2 C_n - 2 LC_{n-1} + C_{n-2} &= 0. \\
\end{aligned}
\end{equation}

Recall that $L$ is the standard nilpotent Jordan block so that $L^2 C$ is obtained from $C$ by shifting the rows of $C$ up by two positions and replacing the two bottom rows with zeros. In particular,  the solution $C_1$ of the first relation from \eqref{eq:haanC} is the matrix whose first two rows are arbitrary, and all the others vanish.  Using this observation, we can easily resolve relations  \eqref{eq:haanC}  with respect to  $C_k$.   Each of these matrices will have arbitrary two top rows, while the other rows will be uniquely expressed in terms of the preceding matrices $C_1,\dots, C_{k-1}$. Moreover, the last (bottom) 
$n-(k+1)$  rows of  $C_k$  will be equal to zero.  

The general solution will be as follows.  The first two rows of  $C_k$ will be denoted by  $a_k$ and $b_k$.   Then the rows of  $C_k$ have the following structure:
\begin{equation}
\label{eq:matricesC}
C_1 = \begin{pmatrix}  a_1 \\ b_1 \\ 0 \\ 0 \\ \vdots \\ 0  \end{pmatrix}, \quad
C_2 = \begin{pmatrix}  a_2 \\ b_2 \\ 2b_1 \\ 0 \\ \vdots \\ 0  \end{pmatrix}, \quad
C_3 = \begin{pmatrix}  a_3 \\ b_3 \\ 2b_2 {-} a_1 \\ 3b_1 \\ \vdots \\ 0  \end{pmatrix}, \quad
C_4 = \begin{pmatrix}  a_4 \\ b_4 \\ 2b_3 {-} a_2 \\ 3b_2 {-}2a_1 \\ 4b_1 \\ 0  \end{pmatrix}, \quad
C_5 = \begin{pmatrix}  a_5 \\ b_5 \\ 2b_4 {-} a_3 \\ 3b_3 {-}2a_2 \\ 4b_2{-}3a_1 \\ 5b_1  \end{pmatrix}, \quad
\end{equation} 

Now we need to reconstruct $B_i$ from the matrix equation $C_i = [L, [B_i, L]]$.   In order for such an equation to be consistent, the matrix $C_i$ must satisfy some compatibility conditions.   Assume that all of our matrices $C_i$ satisfy them and find a {\it preliminary} form of the corresponding matrices $B_i$.

  Let us rewrite formula \eqref{eq:matricesC} in a slightly different way by denoting the elements of the row $b_1$ by $\beta_1,\beta_2, \dots, \beta_n$  (we will only need the fact that the last non-zero rows of the matrices  $C_i$ are proportional to  $b_1$):
{\small
$$
C_1 = \begin{pmatrix}
* & * & * & \dots & * \\
\beta_1 & \beta_2 & \beta _3 & \dots & \beta_n \\
0 & 0 & 0 & \dots & 0 \\
\vdots & \vdots & \vdots & \ddots & \vdots \\
 \end{pmatrix}, \quad
C_2 = \begin{pmatrix}
* & * & * & \dots & * \\
* & * & * & \dots & * \\
2\beta_1 & 2\beta_2 & 2\beta _3 & \dots & 2\beta_n \\
0 & 0 & 0 & \dots & 0 \\
\vdots & \vdots & \vdots & \ddots & \vdots \\
\end{pmatrix}, \quad
C_3 = \begin{pmatrix}
* & * & * & \dots & * \\
* & * & * & \dots & * \\
* & * & * & \dots & * \\
3\beta_1 & 3\beta_2 & 3\beta _3 & \dots & 3\beta_n \\
0 & 0 & 0 & \dots & 0 \\
\vdots & \vdots & \vdots & \ddots & \vdots \\
 \end{pmatrix}
$$
$$
C_4 = \begin{pmatrix}
* & * & * & \dots & * \\
* & * & * & \dots & * \\
* & * & * & \dots & * \\
* & * & * & \dots & * \\
4\beta_1 & 4\beta_2 & 4\beta _3 & \dots & 4\beta_n \\
0 & 0 & 0 & \dots & 0 \\
\vdots & \vdots & \vdots & \ddots & \vdots \\
 \end{pmatrix}, \quad \dots
$$ 
}
Then  by a straightforward computation,  one can check that the corresponding matrices $B_i$  have the following form  (we take into account the fact that the $k$-th column of  $B_k$ is zero since $[e_k, e_k]=0$).
We show the answer for  $6\times 6$ matrices to demonstrate the phenomenon (here the stars $*$ denote those elements which are not important for our purposes):
{\small
$$
B_1 =\begin{pmatrix}
 0 & * &* &* & *& *\\
0 & * & * & * & *&* \\
 0 & 4\lambda_1 &* &* &* &* \\
 0& 0 &3\lambda_1 & * &* &* \\
0& 0 & 0 &2\lambda_1 & *&* \\
0  & 0 & 0 & 0  &\lambda_1 & *\\
\end{pmatrix}, \ \lambda_1=\frac{\beta_3}{5}, \quad
B_2 =\begin{pmatrix}
 * & 0 & * & *&* &* \\
* & 0 & * & * &* &* \\
\! -2\beta_3 & 0  &* &* &* & *\\
 0 & 0 &3\lambda_2 & * &* &* \\
0 & 0 & 0 &2\lambda_2 & *&* \\
0   & 0& 0 & 0  &\lambda_2 & *\\
\end{pmatrix}, \ \lambda_2=\frac{2\beta_4}{4}, \quad
$$
$$
B_3 =\begin{pmatrix}
 * & * & 0& *& *&* \\
* & * & 0 & * &* &* \\
 * & * & 0 &* &* &* \\
 \!-3\beta_3& \!\!\!-3\beta_4 & 0 & * &* &* \\
0& 0 & 0 &2\lambda_3 & *&* \\
0  & 0 & 0 & 0  &\lambda_3 & *\\
\end{pmatrix}, \ \lambda_3=\frac{3\beta_5}{3}, \quad
B_4 =\begin{pmatrix}
 * & * & * & 0 & *&* \\
* & * & * & 0 &* &* \\
 * & *  &* & 0 & *& * \\
 * & * & * & 0 &* &* \\
\!-4\beta_3 & \!\!\!-4\beta_4 & \!\!\!-4\beta_5 & 0 & *&* \\
0   & 0& 0 & 0  &\lambda_4 & *\\
\end{pmatrix},\ \lambda_4=\frac{4\beta_6}{2}, 
$$  
$$
B_5 =\begin{pmatrix}
 * & * & * & * & 0 & * \\
* & * & * & * & 0 & *\\
 * & *  &* & * & 0 & *\\
 * & * & * & * & 0 & * \\
* & * & * & * & 0 &* \\
\!-5\beta_3   & \!\!\!-5\beta_4 & \!\!\!-5\beta_5 & \!\!\!-5\beta_6  & 0 & *\\
\end{pmatrix},
$$

}

Finally, we need to take into account the fact that  $c^i_{jk} = -c^i_{kj}$, see relations \eqref{eq:defB}  defining the matrices $B_1,\dots,B_n$.    In terms of these matrices,  this skew symmetry condition means that the $k$-th column of $B_j$ is equal to the $j$-th column of $B_k$ with minus sign.  By comparing the corresponding columns, we come to the following conclusion.

The second column of  $B_1$ and first column of $B_2$: \quad $2\beta_3 = 4\lambda_1 = \frac{4}{5}\beta_3 \quad  \Rightarrow \quad \lambda_1=\beta_3=0$.  

The third column of   $B_2$ and second column of  $B_3$: \quad  $3\beta_4 = 3\lambda_2 = \frac{3}{2}\beta_4  \quad  \Rightarrow \quad \lambda_2=\beta_4=0$.   

The fourth column of $B_3$ and third column of $B_4$: \quad  $4\beta_5 = 2\lambda_3 = 2\beta_5  \quad  \Rightarrow \quad \lambda_3=\beta_5=0$.   

The fifth column of $B_4$ and fourth column of $B_5$: \quad  $5\beta_6 = \lambda_4 = 2\beta_6  \quad  \Rightarrow \quad \lambda_4=\beta_6=0$, and so on.

Thus,  all the coefficients  $\beta_i$ (starting from $\beta_3$)  and all  $\lambda_i$ vanish, 
so that the matrices  $B_i$ we are interested in have the following form
{\small
$$
B_1 =\begin{pmatrix}
 0 & * &* &* & *& *\\
0 & * & * & * & *&* \\
 0 & 0 &* &* &* &* \\
 0& 0 & 0 & * &* &* \\
0& 0 & 0 & 0 & *&* \\
0  & 0 & 0 & 0  & 0 & *\\
\end{pmatrix},  \quad
B_2 =\begin{pmatrix}
 * & 0 & * & *&* &* \\
* & 0 & * & * &* &* \\
 0 & 0  &* &* &* & *\\
 0 & 0 &0 & * &* &* \\
0 & 0 & 0 &0 & *&* \\
0   & 0& 0 & 0  &0 & *\\
\end{pmatrix},\quad
B_3 =\begin{pmatrix}
 * & * & 0& *& *&* \\
* & * & 0 & * &* &* \\
 * & * & 0 &* &* &* \\
 0& 0 & 0 & * &* &* \\
0& 0 & 0 &0 & *&* \\
0  & 0 & 0 & 0  &0& *\\
\end{pmatrix},  \quad
$$
$$
B_4 =\begin{pmatrix}
 * & * & * & 0 & *&* \\
* & * & * & 0 &* &* \\
 * & *  &* & 0 & *& * \\
 * & * & * & 0 &* &* \\
0 & 0 & 0 & 0 & *&* \\
0   & 0& 0 & 0  & 0 & *\\
\end{pmatrix},\quad  
B_5 =\begin{pmatrix}
 * & * & * & * & 0 & * \\
* & * & * & * & 0 & *\\
 * & *  &* & * & 0 & *\\
 * & * & * & * & 0 & * \\
* & * & * & * & 0 &* \\
0   & 0 & 0 & 0  & 0 & *\\
\end{pmatrix},
$$
}

This structure of matrices $B_k$ does not depend on their size and exactly means that the bracket $[e_i, e_j]$ is a linear combination of $e_\alpha$'s with $\alpha\le \max (i,j)$, as required. This completes the proof of Theorem \ref{thm:5}.
\end{proof}

\subsection{Reduction of a triangular Haantjes operator to a Toeplitz  form}\label{subsec:4.2}

 Let $L$ be a Haantjes operator that is pointwise similar to a Jordan block with eigenvalue $\lambda(x)$.   Notice that $L -\lambda(x)\cdot \Id$ is still a Haantjes operator, which is now pointwise similar to a nilpotent Jordan block. Clearly,  it is sufficient to prove Theorem \ref{thm:1} for $L -\lambda(x)\cdot \Id$. Therefore,  without loss of generality we may assume that $L$ is similar to the nilpotent Jordan block and moreover,  in view of Theorem \ref{thm:5}, is given by a strictly upper triangular matrix in some local coordinates $x_1,\dots,x_n$.  From now on, we will be working only with those coordinate systems that are adapted to the flag of distributions $\mathcal D_k = \operatorname{Ker} L^k$,  see Theorem \ref{thm:5}.   In other words, 
all the subsequent coordinate transformations will be triangular, i.e., 
$$
\begin{aligned}
(x_1)_{\mathrm{new}} &= h_1(x_1,x_2, \dots, x_n),\\
 (x_2)_{\mathrm{new}}  &= h_2(x_2,\dots,x_n),\\
\vdots \\
(x_{n-1})_{\mathrm{new}}  &= h_{n-1}(x_{n-1}, x_n),\\
(x_n)_{\mathrm{new}} &=h_n(x_n)
\end{aligned}
$$

Assuming that $L$ is strictly upper triangular, we will reduce it to a Toeplitz form by using a step-by-step procedure based on the following key statement. 

\begin{Proposition}\label{prop:4.1}
Let $L$ be a Haantjes operator that is strictly upper triangular and of the form
$
L = \begin{pmatrix}
L_{k+1} & B \\
0  & J_s
\end{pmatrix}$, where $s=n-(k+1)$ and $J_s$ is the constant nilpotent Jordan $s\times s$ block.
Then by an appropriate coordinate transformation of the form $(x_{\mathrm{new}})_{k+1} = h(x_{k+1}, \dots, x_n)$ and suitable polynomial with functional coefficients
\begin{equation}
\label{eq:polyn}
p(t) = a_1 t + a_2 t^2 +\dots + a_{n-1} t^{n-1},\quad a_1\ne 0, 
\end{equation}
the operator $p(L)$ can be reduced to the form  
$$
p(L) = \begin{pmatrix}
\widetilde L_{k} & \widetilde B \\
0  & J_{s+1}
\end{pmatrix},
$$
where $J_{s+1}$ is the constant nilpotent Jordan $(s+1)\times (s+1)$ block.
\end{Proposition}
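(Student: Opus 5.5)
The plan is to reduce the statement to a problem about a single row of $L$, namely row $k+1$, and to use the Haantjes relations to show that this row can be straightened by changing only the coordinate $x_{k+1}$ and replacing $L$ by a polynomial in $L$. Write $N$ for the lower-right $(s+1)\times(s+1)$ block of $L$, acting on the coordinates $x_{k+1},\dots,x_n$. Its first row is $(0,\ell_{k+2},\dots,\ell_n)$ and the rest of it is $J_s$. Here the $\ell_j$ may a priori depend on all of $x_1,\dots,x_n$, and $\ell_{k+2}\ne 0$ by $\gl$-regularity.

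First I will record what the allowed operations do. The change $(x_{\mathrm{new}})_{k+1}=h(x_{k+1},\dots,x_n)$ keeps the flag $\mathcal D_j$ and leaves rows $k+2,\dots,n$ untouched. The new row $k+1$ of an operator $P$, restricted to the last $s+1$ columns, is the covector $P^*\ddd h$ read modulo $\ddd x_1,\dots,\ddd x_k$, with everything rewritten in the new coordinates. For a polynomial $p$ as in \eqref{eq:polyn}, the block $p(N)$ must keep $J_s$ in its lower part. This forces $p(t)=t+a_s t^s$ up to terms of degree $>s$, which vanish on $N$, or more generally $p(t)=t+\sum_{j\ge s}a_jt^j$. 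The term $a_s N^s$ only alters the $(k+1,n)$ entry, by $a_s\ell_{k+2}$.

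The target is therefore the equation
$$
p(N)^*\,\ddd h \equiv \ddd x_{k+2} \pmod{\ddd x_1,\dots,\ddd x_k}.
$$
Written out, it reads
$$
\ell_{k+2}\,\partial_{x_{k+1}}h=1,\qquad \ell_{j+1}\,\partial_{x_{k+1}}h+\partial_{x_j}h=0 \ \ (k+2\le j\le n-1),
$$
and the last component is adjusted by $a_s$, which is free. The unknown $h$ may depend only on $x_{k+1},\dots,x_n$, while $a_s$ may depend on everything. By Proposition \ref{prop:3.3}, $p(L)$ is again Haantjes, which is what allows the step to be iterated.

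The key step is to extract from $\mathcal H_L=0$ the differential conditions on $\ell_{k+2},\dots,\ell_n$ that make this overdetermined system for $h$ solvable. I will compute the components $(\mathcal H_L)^{k+1}_{ij}$ for $i,j\ge k+1$ directly in the given coordinates, using formula \eqref{eq:def2}. For these indices the structure of $L$ is very rigid: it is strictly upper triangular with a constant $J_s$ block. I expect this to yield the following.
\begin{enumerate}
\item[(a)] The ratios $\ell_j/\ell_{k+2}$ for $j\le n-1$ do not depend on $x_1,\dots,x_k$. Their possible dependence on these coordinates is what the last column and $a_s$ absorb. This is the analogue of Proposition \ref{prop:3.2}(2) for the quotient.
\item[(b)] These ratios satisfy the Frobenius-type compatibility conditions of the system above.
\end{enumerate}
Granting (a) and (b), $h$ is found by the Frobenius theorem, since the system prescribes $\ddd h$ up to one free component. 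I then choose $a_s=a_s(x)$ to kill the discrepancy in the $(k+1,n)$ entry.

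I expect the main obstacle to be the bookkeeping of the Haantjes components in the step above. One has to identify which components of $\mathcal H_L$ involve only row $k+1$ and show that they give exactly (a) and (b), with no leftover condition that would obstruct the choice of $h$. I would first test the computation in low dimension ($s=1,2$) to guess the pattern, and then prove it by induction on $s$ using the shift structure of $J_s$, in the same spirit as the solution of \eqref{eq:haanC}. If the compatibility turns out to hold only up to a factor, I will absorb that factor by allowing $a_1\ne1$ together with a rescaling of $h$. This is permitted by \eqref{eq:polyn}.
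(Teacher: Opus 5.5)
Your architecture is the paper's. Only row $k+1$ has to be fixed, $x_{k+1}$ is replaced by a potential from a Poincaré lemma with parameters, and the $(k+1,n)$ entry is absorbed by a multiple of $L^s$. But the step that carries all the content, deriving (a) and (b) from $\mathcal H_L=0$, is only announced, and there is a genuine gap: the mechanism you propose for (a) does not work in general.

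Compute the $\mathrm{d}x_{k+1}$-component of $\mathcal H_L(\partial_{x_i},\partial_{x_j})$. Every term of \eqref{eq:def1} except $[L^2\partial_{x_i},L^2\partial_{x_j}]$ has a power of $L$ in front. Now $\mathrm{d}x_{k+1}\circ L^m$ only sees $\partial_{x_{k+2}},\dots,\partial_{x_n}$-components, and these vanish for the brackets because rows $k+2,\dots,n$ of every $L^p$ are constant. So for $i,j\ge k+1$ you only learn two things:
\begin{itemize}
\item $\ell_{k+2},\dots,\ell_{n-1}$ are annihilated by the two fields $L^2\partial_{x_{k+1}},L^2\partial_{x_{k+2}}\in\mathcal D_k$;
\item some further relations that mix $\mathcal D_k$-derivatives with $\partial_{x_{k+1}},\dots$-derivatives.
\end{itemize}
For $k\ge3$ these components alone do not force independence of $x_1,\dots,x_k$.

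The missing idea is to test $\mathcal H_L$ against a first-block argument. In Lemma~\ref{lem:4.2} the paper takes $\xi\in\mathcal D_{i+2}$ with $L^2\xi=\partial_{x_i}$, $i\le k$, and $\eta=\partial_{x_{k+2+j}}$. Pairing with $\mathrm{d}x_{k+1}$ gives $\partial_{x_i}\ell_{k+1+j}=0$ for $1\le j\le s-1$.

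This is independence of the entries themselves, which you need; your (a), stated only for ratios, is too weak. The equation $\partial_{x_{k+1}}h=1/\ell_{k+2}$ with $h=h(x_{k+1},\dots,x_n)$ forces $\ell_{k+2}$ to be independent of $x_1,\dots,x_k$. Your fallback $a_1\neq1$ is unavailable for $s\ge2$. As you observed yourself, $p(J_s)=J_s$ forces $a_1=1$, and changing $x_{k+1}$ cannot repair rows $k+2,\dots,n$. Rescaling is used only for $s=1$.

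Second, your system has one equation too many. For $j=n-1$, the equation $\ell_{j+1}\partial_{x_{k+1}}h+\partial_{x_j}h=0$ is exactly the $\mathrm{d}x_n$-component that $a_s$ absorbs. The correct system prescribes only $\partial_{x_{k+1}}h,\dots,\partial_{x_{n-2}}h$, so two components of $\mathrm{d}h$ are free, not one. The needed compatibility (Lemma~\ref{lem:4.3}) is closedness of $\ell_{k+2}^{-1}\mathrm{d}x_{k+1}-\sum_{j=k+2}^{n-2}\ell_{k+2}^{-1}\ell_{j+1}\,\mathrm{d}x_j$ in $x_{k+1},\dots,x_{n-2}$, with $x_{n-1},x_n$ as parameters.

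With your range, (b) would have to constrain $\ell_n$, and no such constraint follows from $\mathcal H_L=0$. For every function $c$, $L+cL^s$ is Haantjes, has the same form, and has $\ell_n$ replaced by $\ell_n+c\,\ell_{k+2}$.

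Once (a) is established, your route to (b) does work. The $\mathcal D_k$-parts drop out of $\mathrm{d}x_{k+1}[L^2\partial_{x_i},L^2\partial_{x_j}]$, and what remains is exactly the closedness condition above, with no $\ell_n$ in it. The paper reaches the same equations differently. It first replaces $L$ by $L-\frac{\ell_n}{\ell_{k+2}}L^s$, so that, by (a), the lower $(s+1)$-block $J$ depends only on $x_{k+1},\dots,x_n$. That block is then Haantjes on the quotient by Propositions~\ref{prop:3.2} and~\ref{prop:3.3}, and there $\mathcal H_J(e_i,e_j)=[J^2e_i,J^2e_j]$.
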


This proposition will immediately lead us to the proof of Theorem \ref{thm:1}.   Indeed, at the end of this procedure,  after $n-1$ steps, we construct  polynomials  $p_i(\cdot)$ and a new coordinate system such that $P(L) = p_{n-1}(p_{n-2} ( \dots (p_1(L))))=J_n$, where $J_n$ is the standard nilpotent Jordan $n\times n$ block.     Since $L$ and $J_n$ commute and $J_n$ is $\gl$-regular,  we can write $L$ as a polynomial of $J_n$, implying that $L$ is Toeplitz in the new coordinates, as required.   

\begin{proof}[Proof of Proposition \ref{prop:4.1}]
The first step of our procedure is the case, when
$$
L = \begin{pmatrix}
 0 & * & *  & *  & * \\
  & \ddots&\ddots  & & \\
  & & 0 & *  & * \\
  & & & 0 & b \\
  & & & & 0  \\
   \end{pmatrix}
$$
We need to make $b$ equal to $1$.  This can be done by replacing $L$ with $L_{\mathrm{new}} = p(L) = \frac{1}{b} L$.

Next,  consider an operator $L$ from Proposition \ref{prop:4.1}.  Let $k+1$ be the size of the $L_1$-block and consider the $(k+1)$-th row of the matrix $L$:
$$
\Bigr(\underbrace{0 \ \dots \ 0}_{k}\ 0 \ b_1 \ \dots \ b_{s-1} \ b_{s} \Bigr), \qquad  s=n-k-1. 
$$ 

\begin{Lemma}\label{lem:4.2}
The functions $b_1,\dots , b_{s-1}$   (i.e., all $b_i$'s except from the last one) do not depend on the variables $x_1,\dots, x_k$.
\end{Lemma}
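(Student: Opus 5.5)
The plan is to read off the derivatives $\partial b_j/\partial x_a$, $a\le k$, from the Haantjes relations written in the $1$-form formulation \eqref{eq:haant3}, applied to $\alpha=\ddd x_{k+1}$. The advantage of this choice is that the iterates of $L^*$ on $\alpha$ are explicit.

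\emph{Computing the forms $L^{*m}\alpha$.} Since the $(k+1)$-th row of $L$ is $(0,\dots,0,b_1,\dots,b_s)$, we have
$$
L^*\ddd x_{k+1}=\sum_{j=1}^{s} b_j\,\ddd x_{k+1+j}.
$$
Because the lower-right block is the constant Jordan block $J_s$, we have $L^*\ddd x_{k+1+i}=\ddd x_{k+2+i}$ for $i\ge 1$. Hence
$$
L^{*m}\alpha=\sum_{j} b_j\,\ddd x_{k+j+m},
$$
where terms with index $>n$ are dropped. Consequently $\ddd\alpha=0$ and
$$
\ddd(L^{*m}\alpha)=\sum_j \ddd b_j\wedge \ddd x_{k+j+m}.
$$
All these forms annihilate $\mathcal D_{k+1}=\operatorname{Span}(\partial_1,\dots,\partial_{k+1})$ in their second slot.

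\emph{Choice of arguments.} I would evaluate $\beta(X,Y)=0$ with $X=\partial_a$, $a\le k$, and $Y=\partial_t$, $t\ge k+2$. Strict upper triangularity gives $L^p\partial_a\in\operatorname{Span}(\partial_1,\dots,\partial_{a-1})$ for $p\ge1$, with functional coefficients. Modulo $\mathcal D_{k+1}$ we have $L^q\partial_t\equiv\partial_{t-q}$, and the $\mathcal D_{k+1}$-components are killed by $\ddd x_{>k+1}$. Every term of \eqref{eq:haant3} therefore becomes a combination of derivatives $(L^pX)(b_j)$. I will argue by induction on $a$: for $a=1$ we have $L\partial_1=0$, and for larger $a$ the terms containing $L^pX$ with $p\ge1$ only involve derivatives along $\partial_c$, $c<a$, which vanish by the induction hypothesis. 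So in both cases only the $p=0$ terms survive, and
$$
0=\ddd(L^{*2}\alpha)(X,L^2Y)-2\,\ddd(L^{*3}\alpha)(X,LY)+\ddd(L^{*4}\alpha)(X,Y),
$$
where the term $\ddd\alpha(L^2X,L^2Y)$ is absent because $\ddd\alpha=0$.

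\emph{The boundary analysis (main obstacle).} Since $\ddd(L^{*m}\alpha)(\partial_a,\partial_u)=\partial_a b_{u-k-m}$, where $b_i$ is read as $0$ unless $1\le i\le s$, each of the three terms picks out the same index $j=t-k-4$. The coefficients $1,-2,1$ then sum to zero in the interior. The information sits at the truncation boundary: the argument $L^qY\equiv\partial_{t-q}$ is cut off once $t-q\le k+1$. Taking $t$ small, i.e.\ $t=k+2$ and $t=k+3$, only the terms with small $q$ survive. This yields a nondegenerate triangular system whose unknowns are successively $\partial_a b_{s-1},\partial_a b_{s-2},\dots$ in some order, and for each $j\le s-1$ the unknown is multiplied by a nonzero coefficient ($1$, $-2$, or $-1$).

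I expect this index bookkeeping to be the main obstacle. The task is to verify, perhaps by also varying $t$ and using the full rather than truncated formulas, that every $b_j$ with $j\le s-1$ gets isolated, while $b_s$ drops out because its partner index exceeds $n$. If the three-term identity turns out to be too degenerate, I would fall back on the other components $\beta(\partial_a,\partial_t)$ with the roles of $X$ and $Y$ swapped, or on the vector formulation \eqref{eq:def2} of $\mathcal H_L$ restricted to the $(k+1)$-th component.
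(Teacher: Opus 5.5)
There is a genuine gap: the identity you arrive at is vacuous, and the boundary effect you are counting on does not occur. For $X=\partial_a$ and $Y=\partial_t$, your own computation gives $\ddd(L^{*p}\alpha)(L^qX,L^rY)=(L^qX)(b_{t-r-k-p})$. Since $p+q+r=4$, the index equals $t-k-4+q$, independent of how $4-q$ is split between $p$ and $r$. In the $q=0$ group, $L^r\partial_t$ is cut off only when $t-r\le k+1$. In that case $t-k-4\le r-3<0$, so the corresponding $b$ is already zero. At the upper end nothing is lost, because $t-r\le n$. Hence for every $t$, including $t=k+2,k+3$, the three surviving terms all equal $\partial_a b_{t-k-4}$, and the relation reads $(1-2+1)\,\partial_a b_{t-k-4}=0$. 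Already for $a=1$, where $L\partial_1=0$ and nothing was discarded, $\beta(\partial_1,\partial_t)\equiv 0$ holds for every operator of the given shape, so no triangular system exists. Your fallbacks do not rescue this. Swapping $X$ and $Y$ only changes the sign of the $2$-form $\beta$, and the $(k+1)$-th component of \eqref{eq:def2} is the same relation.

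The information sits precisely in the terms your induction throws away. Group the terms by $q$. The $q=1$ group has coefficients $-2,4,-2$ and also cancels. In the $q=2$ group, however, the term $\ddd\alpha(L^2X,L^2Y)$ is absent because $\ddd\alpha=0$, which leaves $1-2=-1$. In fact, for every $X\in\mathcal D_{k+2}$ one has $\ddd x_{k+j+p}(L^qX)=0$ in all terms present, and the full relation is
$$
0=\beta(X,\partial_t)=-(L^2X)\bigl(b_{t-k-2}\bigr).
$$
With $X=\partial_a$, $a\le k$, this only controls derivatives along $L^2\mathcal D_k=\mathcal D_{k-2}$. So $\partial_{x_{k-1}}b_j$ and $\partial_{x_k}b_j$ are out of reach for your choice of arguments however the bookkeeping is done, and the induction hypothesis turns the relation into $0=0$.

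The missing idea is the one used in the paper: take $X=\xi\in\mathcal D_{a+2}$ with $L^2\xi=\partial_{x_a}$. For $a=k-1,k$ such a $\xi$ necessarily involves $\partial_{x_{k+1}}$ or $\partial_{x_{k+2}}$. The displayed formula then gives $\partial_{x_a}b_{t-k-2}=0$ for $t=k+3,\dots,n$, i.e.\ for $j=1,\dots,s-1$, while $b_s$ is never reached. This is the $1$-form counterpart of the paper's observation that only $[L^2\xi,L^2\eta]$ contributes to the $\ddd y_1$-component of $\mathcal H_L(\xi,\eta)$.
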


\begin{proof}[Proof of Lemma \ref{lem:4.2}]
Consider the natural flag of integrable distributions $\mathcal D_1 \subset \mathcal D_2 \subset \dots$, where 
$\mathcal D_i = \operatorname{Span}(e_1, e_2,\dots, e_i)$, where $e_k=\partial_{x_k}$.  We have $L(\mathcal D_{i+1})= \mathcal D_{i}$, $L^2(\mathcal D_{i+2})=\mathcal D_i$,
and $[\mathcal D_i, \mathcal D_j] \subset \mathcal D_{\max\{i,j\}}$.

For convenience, we divide the  local coordinates into two groups and denote them by  $x_1,\dots, x_k$ and  $y_1=x_{k+1},\dots, y_{s+1}=x_n$.

Let $i\in \{1,2,\dots, k\}$.  Then there exists a vector field  
$$
\mbox{$\xi\in \mathcal D_{i+2}$ such that $L^2\xi = e_i = \partial_{x_i}$.}
$$  

We also take
$$
\eta = e_{k+2+j} = \partial_{y_{j+2}},\quad j=1,\dots,s-1.
$$

After this, we compute the  value of 
$$
\langle \dd y^1 , \mathcal H_L(\xi,\eta)\rangle.
$$

Since $L$ is Haantjes, this value equals zero, which gives us some relations for partial derivatives in question.
Notice first of all that
\begin{equation}
\label{eq:forD}
\begin{aligned}
L\eta = L(\partial_{y_{j+2}}) &= \partial_{y_{j+1}} + b_{j+1} \partial_{y_1}+\dots, \\
 L^2 \eta = L^2(\partial_{y_{j+2}}) &= \partial_{y_{j}} + b_{j} \partial_{y_1} +\dots  \quad \mbox{for $j>1$},\\
 L^2(\partial_{y_3}) &=  b_{1} \partial_{y_1} +\dots   \ \quad\quad\quad  \mbox{for $j=1$},
\end{aligned}
\end{equation}
where $\dots$ denote the terms from $\mathcal D_k$.
We will use the standard formula
$$
\begin{aligned}
\mathcal H_L (\xi,\eta) &= 
  L^4[\xi,\eta] - 2L^3\Bigl([L\xi,\eta] + [\xi, L\eta]\Bigr) +  L^2 \Bigl(4 [L\xi, L\eta]  + [L^2\xi,\eta]  + [\xi, L^2\eta] \Bigr) - \\
  &- 2L\Bigl([L^2\xi,L\eta] + [L\xi, L^2\eta]\Bigr) + [L^2\xi, L^2\eta].
\end{aligned}
$$

Substituting  $\xi$ and $\eta$ and taking into account relations \eqref{eq:forD}, we get for $j>1$:
$$
\begin{aligned}
 \mathcal H_L (\xi,\eta)  &= 
  L^4[\xi, \partial_{y_{j+2}}] - \\ & 2L^3\Bigl([L\xi,\partial_{y_{j+2}}] + [\xi, \partial_{y_{j+1}} + b_{j+1} \partial_{y_1}+\dots]\Bigr) +  \\ &
  L^2 \Bigl(4 [L\xi, \partial_{y_{j+1}} + b_{j+1} \partial_{y_1}+\dots]  + [L^2\xi,\partial_{y_{j+2}}]  + [\xi, \partial_{y_{j}} + b_{j} \partial_{y_1}+\dots] \Bigr) - \\
  & 2L\Bigl([L^2\xi,\partial_{y_{j+1}} + b_{j+1} \partial_{y_1}+\dots] + [L\xi, \partial_{y_{j}} + b_{j} \partial_{y_1}+\dots]\Bigr) + \\ & [L^2\xi, \partial_{y_{j}} + b_{j} \partial_{y_1}+\dots].
\end{aligned}
$$

Each Lie bracket of vector fields in this formula belongs to a certain distribution $\mathcal D_\alpha$.  In the formula below, we indicate the corresponding distribution for each of them:
$$
\begin{aligned}
 \mathcal H_L (\xi,\eta)  &= 
  L^4 (\mathcal D_{i+2}) - \\ & 2L^3\Bigl( \mathcal D_{i+1} + \mathcal D_{\max\{i+2, k+1\}}\Bigr) +  \\ &
  L^2 \Bigl(\mathcal D_{\max\{i+1, k+1\}}  + \mathcal D_i + \mathcal D_{\max\{i+2, k+1\}} \Bigr) - \\
  & 2L\Bigl(\mathcal D_{\max\{i, k+1\}} + \mathcal D_{\max\{i+1, k+1\}}\Bigr) + \\ & [\partial_{x_i}, \partial_{y_{j}} + b_{j} \partial_{y_1}+\dots].
\end{aligned}
$$

The first four terms belong to $\mathcal D_k$  (we use the fact that   $L^k (\mathcal D_m)\subset D_{m-k}$).  This implies that
$$  
\langle \dd y_1 ,  \mathcal H_L (\xi,\eta)  \rangle = \langle  \dd y_1, [\partial_{x_i}, \partial_{y_{j}} + b_{j} \partial_{y_1}+\dots]\rangle =
\langle  \dd y_1, [\partial_{x_i},  b_{j} \partial_{y_1}]\rangle = \frac{\partial b_j}{\partial x_i} = 0,
$$
where $i=1,\dots,k$ and $j=2,\dots, s-1$,
as required.    For $j=1$, i.e., $\eta = \partial_{y_3}$,  the verification is basically the same  (the small difference between $j=1$ and $j>1$ in \eqref{eq:forD} does not affect anything),  leading to the required conclusion that  $\frac{\partial b_1}{\partial x_i} = 0$.    \end{proof}

After this we continue working with the lower diagonal block of $L$ of the form
$$
\widetilde J_{s+1} = \begin{pmatrix}
0 & b_1 & \dots & b_{s-1} & b_s \\
 & 0 & 1  & & \\
 & & \ddots&\ddots  & \\
 & & & 0& 1 \\
 & & & &0 \\
 \end{pmatrix}
$$ 
In other words, $\widetilde J_{s+1} $ is obtained from the $J_s$-block of $L$ by extending it up to $(s+1)\times (s+1)$ block.

First of all, we make $b_s$ equal to zero by changing $\widetilde J_{s+1} $ with  $(\widetilde J_{s+1} )_{\mathrm{new}}=p(\widetilde J_{s+1} ) =\widetilde J_{s+1} -\frac{b_s}{b_1} \, \widetilde J_{s+1} ^{s}$. The other components of $\widetilde J_{s+1} $ do not change so that the new block \footnote{The same polynomial $p$ is used to replace the whole operator $L$ with $p(L)$, see the statement of Proposition \ref{prop:4.1}.} 
\begin{equation}
\label{eq:h1}
J = (\widetilde J_{s+1} )_{\mathrm{new}} =\begin{pmatrix}
0 & b_1 & \dots & b_{s-1} & 0 \\
 & 0 & 1  & & \\
 & & \ddots&\ddots  & \\
 & & & 0& 1 \\
 & & & &0 \\
 \end{pmatrix},  \quad b_i = b_i(y_1,\dots,y_{s+1}),  \ b_1\ne 0.
\end{equation}
Since this block does not depend on $x_1, \dots, x_k$, then by Proposition \ref{prop:3.2},  it is a Haantjes operator.

The next lemma gives necessary and sufficient conditions for an operator of form \eqref{eq:h1} to be Haantjes.

\begin{Lemma}\label{lem:4.3}
The operator \eqref{eq:h1} is Haantjes if and only if the differential form 
$$
\beta = \frac{1}{b_1} \,\dd y_1 - \frac{b_2}{b_1}\, \dd y_2 - \dots -  \frac{b_{s-1}}{b_1}\, \dd y_{s-1}
$$
is closed  (here we think of $\beta$ as a differential form in variables $y_1,\dots,y_{s-1}$ while $y_s$ and $y_{s+1}$ are treated as parameters).
\end{Lemma}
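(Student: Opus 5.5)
The plan is to compute the Haantjes torsion of the block \eqref{eq:h1} directly, using the special shape of $J$. Only its first row is non-constant; the remaining rows form the constant shifted Jordan structure. Write $e_j=\partial_{y_j}$. Then
$$
Je_1=0,\qquad Je_2=b_1e_1,\qquad Je_{j+1}=e_j+b_je_1 \ (2\le j\le s-1),\qquad Je_{s+1}=e_s .
$$
Dually, $J^*\dd y_j=\dd y_{j+1}$ for $2\le j\le s$, $J^*\dd y_{s+1}=0$, and $J^*\dd y_1=b_1\dd y_2+\dots+b_{s-1}\dd y_s$. The flag $\mathcal D_i=\operatorname{Span}(e_1,\dots,e_i)$ is integrable and $J\mathcal D_{i+1}\subset\mathcal D_i$.

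\textbf{Step 1: only the $e_1$-component of $\mathcal H_J$ can be nonzero.} The quotient of $J$ by $\mathcal D_1$ is the constant Jordan block, which is Nijenhuis. I will use this, together with the bracket/flag bookkeeping from the proof of Lemma~\ref{lem:4.2}, to show that every term of \eqref{eq:def1} has vanishing components along $e_2,\dots,e_{s+1}$. The second tool is the remark that applying $L^2$ kills $\mathcal D_2$. Consequently $\mathcal H_J=0$ is equivalent to
$$
\langle \dd y_1,\mathcal H_J(\xi,\eta)\rangle=0 \quad\text{for all }\xi,\eta .
$$

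\textbf{Step 2: identify this condition with $\dd\beta=0$.} I would use the $1$-form version \eqref{eq:haant3} with $\alpha=\dd y_1$. Its iterates are
$$
{L^*}\alpha=b_1\dd y_2+\sum_{j\ge2}b_j\dd y_{j+1},\qquad {L^*}^2\alpha=b_1\dd y_3+\dots .
$$
Here every $\dd y_j$ with $j\ge2$ is closed, and $J^*$ acts on these differentials by a constant shift. So the only non-trivial exterior derivatives come from $\dd b_j$.

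Alternatively, I would mimic Lemma~\ref{lem:4.2}. Take $\xi$ with $J^2\xi=e_a$ and $\eta=e_c$. Then only the last bracket $[J^2\xi,J^2\eta]$ contributes to the $\dd y_1$-component, and it yields
$$
\partial_{y_a}b_{c-2}-\partial_{y_{c-2}}b_{a}\cdot(\dots).
$$
Normalising by $b_1$ should give precisely the coefficients of $\dd\beta$. The components involving $y_s$ and $y_{s+1}$ must drop out, because $b_s=0$ and because of how $e_s,e_{s+1}$ enter the chain; this is why $y_s,y_{s+1}$ appear only as parameters in the statement.

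\textbf{Step 3: sufficiency, as a cross-check.} If $\beta$ is closed in $y_1,\dots,y_{s-1}$, choose $w$ with $\partial_{y_1}w=1/b_1$ and $\partial_{y_j}w=-b_j/b_1$ for $2\le j\le s-1$. Then
$$
J^*\dd w=\dd y_2+w_{y_s}\,\dd y_{s+1}.
$$
In the coordinates $(w,y_2,\dots,y_{s+1})$ the operator $J$ becomes the constant Jordan block, modified by a single entry $w_{y_s}$ in the top-right corner. That operator is $J_{s+1}+f\,J_{s+1}^{\,s}$, which is Haantjes by Proposition~\ref{prop:3.3}. This gives the converse independently of Step 2.

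\textbf{Main obstacle.} The hard part is Step 2: tracking exactly which terms of \eqref{eq:def1} survive in the $\dd y_1$-component, and checking that the factors of $b_1$ combine into $\dd(1/b_1)$ and $\dd(b_j/b_1)$ rather than some other combination. I would first do $s=3,4$ by hand to fix the pattern, then argue in general using the flag filtration as in Lemma~\ref{lem:4.2}.
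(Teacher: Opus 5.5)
Your Step~1 is sound and is the paper's starting point: $\mathcal N_J$ takes values in $\operatorname{Span}(e_1)=\operatorname{Ker}J$, so $\mathcal H_J(\xi,\eta)=\mathcal N_J(J\xi,J\eta)$. Your Step~3 is a correct and genuinely different proof of the ``if'' direction. The coordinate $w$ turns $J$ into $J_{s+1}+f\,J_{s+1}^{\,s}$, a polynomial in a constant Nijenhuis operator, hence Haantjes by Proposition~\ref{prop:3.3}; the paper instead reads both directions off the same explicit equations. The gap is the ``only if'' direction, which is the one actually used afterwards to produce $(y_1)_{\mathrm{new}}$. It rests entirely on Step~2, where you only announce the expected outcome and label it the main obstacle.

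Moreover, the mechanism you sketch for Step~2 is false. For $2\le a\le s-1$ the field with $J^2\xi=e_a$ is $\xi=e_{a+2}-\tfrac{b_a}{b_1}e_3$, whose coefficients are not constant. Take $\eta=e_c$ with $c\ge4$. The last bracket gives only $[e_a,e_{c-2}+b_{c-2}e_1]=\partial_{y_a}b_{c-2}\,e_1$. But $J^2[\xi,J^2\eta]$ and $-2J[J\xi,J^2\eta]$ together contribute $-b_1(\partial_{y_{c-2}}+b_{c-2}\partial_{y_1})\bigl(\tfrac{b_a}{b_1}\bigr)e_1$. So keeping only the last bracket would give the wrong condition $\partial_{y_a}b_{c-2}=0$.

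What is missing is to use tensoriality and evaluate on the coordinate fields themselves. Each $J^ke_i$ is $e_{i-k}$ plus a multiple of $e_1$, or just a multiple of $e_1$. Hence every bracket in $\mathcal N_J(Je_i,Je_j)$ is a multiple of $e_1$ and is killed by $J$. This leaves $\mathcal H_J(e_i,e_j)=[J^2e_i,J^2e_j]$, which is exactly the system \eqref{eq:73}.

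You must then still show that this system is equivalent to $\dd\beta=0$, and this is not a termwise rescaling by $b_1$. The equations with $\alpha=1$ are $-b_1^2$ times the $\dd y_1\wedge\dd y_\gamma$ coefficients of $\dd\beta$. The equations with $\alpha,\gamma\ge2$ agree with $-b_1$ times the $\dd y_\alpha\wedge\dd y_\gamma$ coefficients only after the $\partial_{y_1}$-derivatives of $b_\alpha$ and $b_\gamma$ are eliminated using the $\alpha=1$ equations.
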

\begin{proof} Denote $e_i = \partial_{y_i}$. One can easily check that $\mathcal N_J (\xi, \eta) = B(\xi, \eta) e_1$.  
Since $Je_1=0$, the formula for the Haantjes torsion simplifies and takes the form 
$$
\mathcal H_J (\xi,\eta) = 
J^2\mathcal N_J(\xi; \eta ) + \mathcal N_J(J\xi;J\eta ) -  J\Bigl(\mathcal N_J(\xi;J\eta ) +\mathcal N_J(J\xi; \eta )\Bigr)=\mathcal N_J(J\xi;J\eta ).
$$ 
Let us compute  $\mathcal H_J (e_i, e_j)  = \mathcal N_J(Je_i;Je_j )$ assuming that $i,j>1$   (if $i$ or $j$ equals 1, we get zero automatically, since $Je_1=0$).
$$
\mathcal N_J(Je_i;Je_j ) =  J^2[Je_i;Je_j] - J [J^2e_i;Je_j] - J [Je_i;J^2e_j] + [J^2 e_i, J^2 e_j].
$$
It can be easily seen that only the last term in this formula is not identically zero.  This implies, in particular, that $\mathcal H_J (e_i, e_j) =0$ if either $i\le 2$ or $j\le 2$.  Thus, for $i,j \ge 4$ we get:
\begin{equation}
\label{eq:72}
\begin{aligned}
\mathcal H_J(e_i;e_j ) &=[J^2 e_i, J^2 e_j]  = [e_{i-2} + b_{i-2}e_1, e_{j-2} + b_{j-2}e_1] = \\
&= \left(\frac{\partial b_{j-2}}{\partial y_{i-2}} + b_{i-2} \frac{\partial b_{j-2}}{\partial y_{1}} - 
\frac{\partial b_{i-2}}{\partial y_{j-2}} - b_{j-2} \frac{\partial b_{i-2}}{\partial y_{1}} \right) e_1.
\end{aligned}
\end{equation}
Similarly, for $i=3$ and $j\ge 4$:
\begin{equation}
\label{eq:72b}
\begin{aligned}
\mathcal H_J(e_3;e_j ) &=[J^2 e_3, J^2 e_j]  = [b_1 e_1, e_{j-2} + b_{j-2}e_1] = \\
&= \left(  b_{1} \frac{\partial b_{j-2}}{\partial y_{1}} - 
\frac{\partial b_{1}}{\partial y_{j-2}} - b_{j-2} \frac{\partial b_{1}}{\partial y_{1}} \right) e_1.
\end{aligned}
\end{equation}

Denoting $\alpha=i-2$ and $\beta = j-2$,  we conclude that $J$ is Haantjes if and only if the following relations hold
\begin{equation}
\label{eq:73}
\begin{aligned}
\frac{\partial b_{\beta}}{\partial y_{\alpha}} + b_{\alpha} \frac{\partial b_{\beta}}{\partial y_{1}} - 
\frac{\partial b_{\alpha}}{\partial y_{\beta}} - b_{\beta} \frac{\partial b_{\alpha}}{\partial y_{1}} &= 0,   \qquad \mbox{for all $\alpha,\beta = 2,\dots, s-1$}.\\
 b_{1} \frac{\partial b_{\beta}}{\partial y_{1}} - 
\frac{\partial b_{1}}{\partial y_{\beta}} - b_{\beta} \frac{\partial b_{1}}{\partial y_{1}} &= 0,   \qquad \mbox{for $\alpha=1$, $\beta = 2,\dots, s-1$}.
\end{aligned}
\end{equation}

It is straightforward to see that these relations are equivalent to the closedness of $\beta$  (as a $1$-form in variables $y_1,\dots,y_{s-1}$).  
\end{proof}

Since $\beta= \frac{1}{b_1} \,\dd y_1 - \frac{b_2}{b_1}\, \dd y_2 - \dots -  \frac{b_{s-1}}{b_1}\, \dd y_{s-1}$ is closed, we can locally find $(y_1)_{\mathrm{new}}=F(y_1,\dots, y_{s+1})$ such that 
$$
\dd (y_1)_{\mathrm{new}} = \beta + F_{y_s}\dd y_s + F_{y_{s+1}}\dd y_{s+1}.
$$   
It is easy to see that in the new variables
$(y_1)_{\mathrm{new}}, y_2, \dots, y_{s+1}$, the operator $J$ takes the form 
$$
J_{\mathrm{new}} = \begin{pmatrix}
0 & 1 & \dots & 0 & c(y) \\
 & 0 & 1  & & 0\\
 & & \ddots&\ddots  &\vdots \\
 & & & 0& 1 \\
 & & & &0 \\
 \end{pmatrix}
$$

Finally, to kill $c(y)$, we replace $J$ with $J_{\mathrm{new}} = J -c(y)\,J^s$.  (As above,   we simultaneously replace the whole operator $L$ with $L - c(y)\,L^s$).

As a result, $J$ is reduced to the standard nilpotent block $J_{s+1}$ as required.  This completes the proof of 
Proposition \ref{prop:4.1} and therefore the Toeplitz normal form theorem (Theorem \ref{thm:1}). \end{proof}

\subsection{Toeplitz normal form for a complex Jordan block}\label{subsec:4.3}

Here we prove Theorem \ref{thm:1b} as a corollary of (the proof of) Theorem \ref{thm:1} and properties of complex Haantjes operators proved in the Appendix.  
Let  $L$ be a real matrix which is similar to a real Jordan block related to a pair of complex conjugate eigenvalues $\lambda^{\pm} = a \pm \mathrm{i} \, b$. 
Then by Corollary \ref{cor:7.5},  we introduce a complex structure $J$ such that $LJ = JL$ so that in complex coordinates $z_1,\dots, z_n$,  the operator $L$ is given by an $n\times n$ complex matrix $L^{\mathbb C}$. Moreover,  
$$
L^{\mathbb C} = p(\widetilde L),
$$
where $\widetilde L$ is a holomorphic Haantjes operator and $p(\cdot)$ is a polynomial whose coefficients are complex valued smooth  functions in $z_1,\dots,z_n$ (not necessarily holomorphic).    Clearly $\widetilde L$ is similar to a Jordan block. 

We can continue working with the holomorphic Haantjes operator $\widetilde L(z)$, $z=(z_1,\dots, z_n)$  in the same way as with smooth Haantjes operators $L(x)$, $x=(x_1,\dots, x_n)$ similar to a Jordan block.  In other words, the above reduction procedure explained in detail in Sections \ref{subsec:4.1} and \ref{subsec:4.2}  works without any changes in the holomorphic setting.    Applying this procedure to $\widetilde L(z)$, we find a {\it holomorphic} coordinate transformation $(z_1,\dots, z_n) \mapsto (z_1',\dots,z_n')$ which brings $\widetilde L$ to a complex upper triangular Toeplitz form.  It remains to notice that a polynomial of an upper triangular Toeplitz matrix is also an upper triangular Toeplitz matrix, which completes the proof of Theorem \ref{thm:1b}.

\section{Proof of Theorem \ref{thm:2} and Remark \ref{r1}}\label{sect:5}

We start with an interesting algebraic fact. Let $\mathbb K= \R$ or $\mathbb C$. Consider the non-linear map 
$\Phi: {\mathbb K}^k \times {\mathbb K}^s \to {\mathbb K}^{k + s}$ defined 
as follows. For $a = (a_1, \dots, a_k)\in {\mathbb K}^k$ and $b=(b_1, \dots, b_s)\in {\mathbb K}^s$,  we set $\Phi(a, b) = c$, where $c=(c_1, \dots, c_{k+s}) \in {\mathbb K}^{k+s}$ is defined from the polynomial relation
    \begin{equation}
    \label{l1_1}
    (\lambda^k + a_1 \lambda^{k - 1} + \dots + a_k)(\lambda^s + b_1 \lambda^{s - 1} + \dots + b_s)=
    \lambda^{k+s} + c_1\lambda^{k+s-1} + \dots + c_{k+s}.
    \end{equation}

\begin{Lemma}\label{lem:5.1}
Let  the polynomials $p_1(\lambda)=\lambda^k + a_1 \lambda^{k - 1} + \dots + a_k$ and $p_2(\lambda)=\lambda^s + b_1 \lambda^{s - 1} + \dots + b_s$ be coprime and $\Phi(a,b)=c\in {\mathbb K}^{k + s}$. Then locally,  in a neighborhood of $(a,b)\in {\mathbb K}^k\times {\mathbb K}^s$,  the mapping $\Phi$ is invertible, that is, there exist neighborhoods $U_{(a,b)} \subset {\mathbb K}^k\times {\mathbb K}^s$ and $U_c\subset {\mathbb K}^{k+s}$  such that $\Phi: U_{(a,b)} \to U_c$ is bijective and $\Phi^{-1}: U_c \to U_{(a,b)}$ is analytic.
\end{Lemma}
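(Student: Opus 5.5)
The plan is to apply the analytic inverse function theorem to $\Phi$ at $(a,b)$. $\Phi$ is polynomial, hence analytic, both over $\R$ and over $\mathbb C$. It therefore suffices to show that the differential $d\Phi_{(a,b)}:\mathbb K^k\times\mathbb K^s\to\mathbb K^{k+s}$ is a linear isomorphism. The (real or complex) analytic inverse function theorem then gives neighbourhoods $U_{(a,b)}$ and $U_c$ on which $\Phi$ is a bijection with analytic inverse.

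To compute the differential, identify $\mathbb K^k$ with polynomials of degree $\le k-1$, $\mathbb K^s$ with polynomials of degree $\le s-1$, and $\mathbb K^{k+s}$ with polynomials of degree $\le k+s-1$. A variation $(\delta a,\delta b)$ corresponds to a pair $(q_1,q_2)$ with $\deg q_1\le k-1$, $\deg q_2\le s-1$. Differentiating $(p_1+\varepsilon q_1)(p_2+\varepsilon q_2)$ at $\varepsilon=0$ gives
$$
d\Phi_{(a,b)}(q_1,q_2)= q_1p_2+p_1q_2 ,
$$
which has degree $\le k+s-1$, as it should. Domain and target have the same dimension $k+s$, so it is enough to prove injectivity.

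Suppose $q_1p_2+p_1q_2=0$. Then $p_1$ divides $q_1p_2$, and since $p_1,p_2$ are coprime, $p_1$ divides $q_1$. But $\deg q_1<k=\deg p_1$, so $q_1=0$, and then $p_1q_2=0$ forces $q_2=0$. Over $\R$, coprimality means $\gcd=1$ in $\R[\lambda]$, equivalently no common complex root, and the divisibility argument is valid in $\R[\lambda]$ as it stands. Equivalently, $\det d\Phi_{(a,b)}$ is, up to sign, the Sylvester resultant $\operatorname{Res}(p_1,p_2)$, which is nonzero exactly when $p_1$ and $p_2$ are coprime.

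No step is a real obstacle. The only points needing care are that the linearisation is exactly the Sylvester map, and that both the real and the complex analytic inverse function theorems apply, which holds because $\Phi$ is polynomial.
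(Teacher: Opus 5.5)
Your proposal is correct and follows the paper's route: the paper also applies the analytic inverse function theorem, after writing out the Jacobian of $\Phi$, recognising it as the Sylvester matrix of $p_1,p_2$, and citing that its determinant is, up to sign, the resultant, which is nonzero for coprime polynomials. The only difference is that you prove invertibility of the linearisation $(q_1,q_2)\mapsto q_1p_2+p_1q_2$ directly by the divisibility argument, which is essentially the standard proof of that resultant fact, so your version is self-contained rather than relying on the citation.
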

\begin{proof}
W.l.o.g we assume $k \geq s$. Expanding the left hand side of \eqref{l1_1}, we obtain the explicit formulas for $c=\Phi(a,b)$: 
\begin{equation}\label{vans}
\begin{aligned}
c_1 & = a_1 + b_1, \\
c_2 & = a_2 + a_1 b_1 + b_2, \\
& \dots \\
c_s & = a_s + a_{s - 1} b_1 + \dots + b_s \\
c_{s + 1} & = a_{s + 1} + a_s b_1 + \dots + a_1 b_s, \\
& \dots \\
c_{k} & = a_k + a_{k - 1} b_1 + \dots + a_{k - s} b_s, \\
c_{k + 1} & = a_k b_1 + a_{k - 1} b_2 + \dots + a_{k - s + 1} b_s, \\
& \dots \\
c_{k + s} & = a_k b_s.
\end{aligned}
\end{equation}
Explicit calculation of the Jacobi matrix  (differential) of $\Phi$ yields the matrix
$$
\begin{pmatrix}
1 & 0 & \dots & 0 & 1 & 0 & \dots & 0 \\
b_1 & 1 & \dots & 0 & a_1 & 1 & \dots & 0 \\
b_2 & b_1 & \ddots & 0 & a_2 & a_1 & \ddots & 0 \\
\vdots & \vdots & \ddots & 1 & \vdots & \vdots & \ddots & 1 \\
b_s & \!\!\! b_{s - 1} &  \dots & \vdots & a_k & \!\!\! a_{k - 1} & \dots & \vdots \\
0 & b_s & \ddots & \vdots & 0 & a_k & \ddots & \vdots \\
\vdots & \vdots & \ddots & \!\!\! b_{s - 1} & \vdots & \vdots &  \ddots & \!\!\! a_{k - 1} \\
0 & 0 & \dots & b_s & 0 & 0 & \dots & a_k \\ 
\end{pmatrix}.
$$
known (up to a sign) as the Sylvester matrix for the polynomials $p_1(\lambda)$ and $p_2(\lambda)$. Up to a sign, its determinant coincides with the resultant of the polynomials $p_1(\lambda), p_2(\lambda)$ (Theorem 3.1 in \cite{prasolov}). As the polynomials are coprime, i.e., have no common roots,  the resultant is not zero. Hence, $\det \Bigl(\dd \Phi(a,b)\Bigr)\ne 0$ and the statement follows from the inverse function theorem in the analytic category.
\end{proof}

We will use the following corollary of Lemma \ref{lem:5.1}. 

\begin{Corollary}\label{cor:5.1}
Let $\chi(\lambda)$ be a monic polynomial of degree $k + s$ with coefficients that are real smooth (analytic) functions on a manifold $\mathsf M$. Assume that at a given point $\mathsf p\in\mathsf M$, the polynomial $\chi(\lambda)$ can be factored into two coprime monic polynomials $p_1(\lambda)$ and $p_2(\lambda)$ of degrees $k$ and $s$, respectively. Then locally, in some neighborhood $U(\mathsf p)$, there exist unique monic polynomials $\chi_1(\lambda)$ and $\chi_2(\lambda)$ of degrees $k$ and $s$, respectively, such that

1.   $\chi(\lambda) = \chi_1(\lambda) \chi_2(\lambda)$ for all points of $U(\mathsf p)$;

2.   their coefficients are smooth (analytic) functions on $U(\mathsf p)$;

3.  at the point $\mathsf p$, the polynomials $\chi_1(\lambda)$ and $\chi_2(\lambda)$ coincide with $p_1(\lambda)$ and $p_2(\lambda)$.

In other words, the coprime factorisation at $\mathsf p\in\mathsf M$ can be uniquely extended to the entire neighborhood $U(\mathsf p)$.
\end{Corollary}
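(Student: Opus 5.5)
The plan is to derive Corollary \ref{cor:5.1} directly from Lemma \ref{lem:5.1} via the inverse map $\Phi^{-1}$.

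\emph{Step 1: reduce to coefficient vectors.} Write $\chi(\lambda)=\lambda^{k+s}+c_1(x)\lambda^{k+s-1}+\dots+c_{k+s}(x)$, where the $c_i$ are smooth (analytic) functions on $\mathsf M$. Let $a\in\R^k$ and $b\in\R^s$ be the coefficient vectors of $p_1$ and $p_2$, so that $\Phi(a,b)=c(\mathsf p)$. Here we take $\mathbb K=\R$, which is natural since the coefficients are real.

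\emph{Step 2: define the factors.} Since $p_1$ and $p_2$ are coprime, Lemma \ref{lem:5.1} gives neighbourhoods $U_{(a,b)}$ and $U_c$ and an analytic inverse $\Phi^{-1}\colon U_c\to U_{(a,b)}$. By continuity of $c(x)$, choose a neighbourhood $U(\mathsf p)$ with $c(U(\mathsf p))\subset U_c$. Then set
\[
(a(x),b(x))=\Phi^{-1}(c(x)),
\]
and let $\chi_1$ and $\chi_2$ be the monic polynomials with coefficient vectors $a(x)$ and $b(x)$.

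Properties 1--3 then follow directly. Property 1 holds because $\Phi(a(x),b(x))=c(x)$, which is exactly the identity $\chi=\chi_1\chi_2$. Property 2 holds because a composition of an analytic map with a smooth (analytic) map is smooth (analytic). Property 3 holds because $\Phi^{-1}(c(\mathsf p))=(a,b)$.

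\emph{Step 3: uniqueness.} This is the only delicate point. Uniqueness should be understood among factorisations whose coefficients depend continuously on $x$ and which agree with $p_1,p_2$ at $\mathsf p$. I would argue by roots rather than rely only on injectivity of $\Phi$ on $U_{(a,b)}$.

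Since $p_1$ and $p_2$ have disjoint root sets, continuity of roots lets us shrink $U(\mathsf p)$ so that the roots of $\chi(x)$ split into two separated clusters: one near the roots of $p_1$ and one near the roots of $p_2$. Take any continuous factorisation $\chi=\widetilde\chi_1\widetilde\chi_2$ with $\widetilde\chi_i(\mathsf p)=p_i$. Its factor $\widetilde\chi_1(x)$ must have its $k$ roots, counted with multiplicity, near those of $p_1$. This forces $\widetilde\chi_1(x)$ to consist exactly of the first cluster, so it is uniquely determined as a polynomial. Alternatively, the continuous curve $(\widetilde a(x),\widetilde b(x))$ stays in $U_{(a,b)}$ near $\mathsf p$, where $\Phi$ is injective, and this gives the same conclusion.

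There is no real obstacle beyond being careful about the sense in which uniqueness holds. All the analytic content is already contained in Lemma \ref{lem:5.1}.
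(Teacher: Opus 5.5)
Your proposal is correct and follows the argument the paper intends: the paper gives no separate proof and treats the corollary as an immediate consequence of Lemma~\ref{lem:5.1}, obtained by composing the analytic local inverse $\Phi^{-1}$ with the coefficient map $x\mapsto c(x)$, exactly as in your Step 2. For uniqueness on all of $U(\mathsf p)$ (and not only as germs at $\mathsf p$), you should also take $U(\mathsf p)$ connected; then the number of roots of a competing continuous factor $\widetilde\chi_1(x)$ that lie in the first cluster is locally constant, hence identically $k$, and your cluster argument goes through.
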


Now we are ready to prove Theorem \ref{thm:2} in the (more general) version of Remark \ref{r1}. 

Let $L$ be an operator satisfying $\mathcal T^{(\mathrm m)}_L = 0$, i.e., the Nijenhuis $\mathrm m$-level torsion of $L$ vanishes. Assume that at a point $\mathsf p\in\mathsf M$,  the characteristic polynomial of $L$ admits a factorisation into two coprime monic polynomials $\chi_{L(\mathsf p)}(\lambda)=p_1(\lambda) p_2(\lambda)$ and apply  Corollary \ref{cor:5.1}. We obtain two coprime polynomials $\chi_1(\lambda), \chi_2(\lambda)$ with smooth coefficients, such that $\chi_1(\lambda) \chi_2(\lambda) = \chi_L(\lambda)$. 

We define a pair of operators $M_1 = \chi_1(L)$ and $M_2 = \chi_2(L)$. If $\mathcal T^{(\mathrm m)}_L = 0$, then by Proposition \ref{prop:3.3},  $\mathcal T^{(\mathrm m)}_{M_1} = \mathcal T^{(\mathrm m)}_{M_2} = 0$. Due to the choice of the polynomials $\chi_1$ and $\chi_2$,  both $M_1$ and $M_2$ satisfy the conditions of Proposition \ref{prop:3.4}. Moreover,  $T \mathsf M = \operatorname{Ker} M_1 \oplus \operatorname{Ker} M_2$.  Thus, by Proposition \ref{prop:3.4}, we obtain two complementary integrable distributions $\mathcal D_1=\operatorname{Ker} M_1$ and $\mathcal D_2=\operatorname{Ker} M_2$,  both invariant with respect to $L$. 

Hence, there exist local coordinates $u, v$  such that $\mathcal D_1 = \operatorname{Span}(\partial_{u^1},\dots,\partial_{u^k})$ and  $\mathcal D_2 = \operatorname{Span}(\partial_{v^1},\dots,\partial_{v^s})$.  They automatically satisfy the property (i) from Theorem \ref{thm:2}:  in this  coordinate system,  $L = \begin{pmatrix} A(u,v) & 0 \\ 0 & B(u,v)\end{pmatrix}$.  

The vanishing of the torsion $\mathcal T^{(\mathrm m)}$  for $A$ and $B$ with respect to $u$ and $v$ is obvious (property (ii) in Theorem \ref{thm:2}).  To verify (iii) and (iv), we need the following lemma.

\begin{Lemma}\label{lem:5.2}
Let $L$ be in the form \eqref{form2}. Then the following formula holds:
\begin{equation}\label{dark}
\begin{aligned}
\mathcal T^{(\mathrm m)}_L (\partial_{u^i}, \partial_{v^j}) & = (-1)^{{\mathrm m}-1}\sum_{\mathrm k = 0}^{\mathrm m} (-1)^{\mathrm k} \binom{\mathrm m}{\mathrm k}(A^{\mathrm m - \mathrm k})^q_i (\operatorname{ad}^{\mathrm m - 1}_B B_{u^q})^s_j (B^{\mathrm k})^\beta_s \partial_{v^\beta} - \\
& - (-1)^{{\mathrm m}-1}\sum_{\mathrm k = 0}^{\mathrm m} (-1)^{\mathrm k} \binom{\mathrm m}{\mathrm k}(B^{\mathrm m - \mathrm k})^r_j (\operatorname{ad}^{\mathrm m - 1}_A A_{v^r})^s_i (A^{\mathrm k})^\alpha_s \partial_{u^\alpha}
\end{aligned}
\end{equation}
\end{Lemma}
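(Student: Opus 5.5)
The plan is to prove \eqref{dark} by induction on $\mathrm m$, treating the $\partial_{v}$-component and the $\partial_u$-component separately. The two components are interchanged by the symmetry $u\leftrightarrow v$, $A\leftrightarrow B$, under which the sign flips because $\mathcal T^{(\mathrm m)}_L$ is skew. So it suffices to handle the $\partial_v$-part.

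\emph{Base case $\mathrm m=1$.} Use $[\partial_{u^i},\partial_{v^j}]=0$, $L\partial_{u^i}=A^q_i\partial_{u^q}$ and $L\partial_{v^j}=B^s_j\partial_{v^s}$ to expand $\mathcal N_L(\partial_{u^i},\partial_{v^j})$ term by term:
\begin{itemize}
\item $[L\partial_{u^i},L\partial_{v^j}]$ has $v$-part $A^q_i\,\partial_{u^q}B^s_j\,\partial_{v^s}$ and $u$-part $-B^s_j\,\partial_{v^s}A^q_i\,\partial_{u^q}$;
\item $-L[\partial_{u^i},L\partial_{v^j}]$ contributes $-(B\,B_{u^i})$ to the $v$-part;
\item $-L[L\partial_{u^i},\partial_{v^j}]$ contributes $+A\,A_{v^j}$ to the $u$-part.
\end{itemize}
Collecting terms, the $v$-part is the matrix $A^q_iB_{u^q}-B_{u^i}B$ (indices contracted as in \eqref{dark}), up to a term of the form $\operatorname{ad}_B$ of something, which I expect to cancel after careful bookkeeping. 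This is exactly the $\mathrm m=1$ instance of \eqref{dark}, with $\operatorname{ad}_B^0=\Id$.

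\emph{Inductive step.} Since $\mathcal T^{(\mathrm m-1)}_L$ is a tensor and $L$ preserves both $\mathcal D_1=\operatorname{Span}(\partial_u)$ and $\mathcal D_2=\operatorname{Span}(\partial_v)$, the restriction of $\mathcal T^{(\mathrm m-1)}$ to $\mathcal D_1\times\mathcal D_2$ with values projected to $\mathcal D_2$ can be written as $\xi\mapsto X_\xi\in\operatorname{End}(\mathcal D_2)$, where $X_\xi(\eta)$ is that projection for $\xi\in\mathcal D_1$, $\eta\in\mathcal D_2$. The recursion \eqref{highhaam} then reads
$$
X'_\xi=B^2X_\xi+X_{A\xi}B-BX_{A\xi}-BX_\xi B=\operatorname{ad}_B\bigl(BX_\xi-X_{A\xi}\bigr).
$$
Introduce the linear operators $\mathsf L_B$ (left multiplication by $B$), $\mathsf R_B$ (right multiplication by $B$) and $\mathsf R_A\colon X_\xi\mapsto X_{A\xi}$ (precomposition by $A$ in the slot $\xi$). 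These pairwise commute, and $\operatorname{ad}_B=\mathsf L_B-\mathsf R_B$. The recursion becomes $X^{(\mathrm m)}=\operatorname{ad}_B(\mathsf L_B-\mathsf R_A)X^{(\mathrm m-1)}$, and the claimed formula is
$$
X^{(\mathrm m)}=(-1)^{\mathrm m-1}(\mathsf R_A-\mathsf R_B)^{\mathrm m}\operatorname{ad}_B^{\mathrm m-1}(B_u).
$$
Expanding this by the binomial theorem gives exactly the first sum in \eqref{dark}.

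\emph{Main obstacle.} The step I expect to be hardest is closing the induction. Writing $\mathsf L_B-\mathsf R_A=\operatorname{ad}_B-(\mathsf R_A-\mathsf R_B)$, the recursion produces, in addition to the desired term $-(\mathsf R_A-\mathsf R_B)\operatorname{ad}_B X^{(\mathrm m-1)}$, an extra term $\operatorname{ad}_B^2X^{(\mathrm m-1)}$. I would first re-examine the base case to determine precisely which combination of $\mathsf L_B$, $\mathsf R_B$, $\mathsf R_A$ acts on $B_u$; I expect that the correct normalisation makes the extra terms telescope. If it does not, I would instead prove the closed form
$$
X^{(\mathrm m)}=\bigl(\operatorname{ad}_B(\mathsf L_B-\mathsf R_A)\bigr)^{\mathrm m-1}X^{(1)}
$$
directly, and then simplify using commutativity of the three operators. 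In that simplification one should use that $\operatorname{ad}_B$ of an expression already of the form $\operatorname{ad}_B(\cdot)$ regroups with the $(\mathsf R_A-\mathsf R_B)$-powers. The $\partial_u$-component is handled identically with the roles of $A$ and $B$ exchanged, which produces the second sum with the opposite sign.
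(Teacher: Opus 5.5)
Your plan matches the paper's: induction on $\mathrm m$ via the recursion \eqref{highhaam}, with the $\partial_v$-part done first and the $\partial_u$-part obtained from the $u\leftrightarrow v$ symmetry plus skew-symmetry of $\mathcal T^{(\mathrm m)}_L$. Your recursion $X'_\xi=\operatorname{ad}_B(BX_\xi-X_{A\xi})$ is also correct. But the argument as written fails because of a left/right confusion.

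In \eqref{dark}, the factor $(B^{\mathrm k})^\beta_s$ is contracted with the \emph{upper} index of $(\operatorname{ad}^{\mathrm m-1}_B B_{u^q})^s_j$. So it means $\bigl(B^{\mathrm k}\operatorname{ad}^{\mathrm m-1}_B B_{u^q}\bigr)^\beta_j$, i.e.\ left multiplication $\mathsf L_B^{\mathrm k}$, not $\mathsf R_B^{\mathrm k}$. Your own base-case bullets show this. The $\partial_v$-part of $\mathcal N_L(\partial_{u^i},\partial_{v^j})$ is exactly $A^q_iB_{u^q}-BB_{u^i}$. Your ``collected'' expression $A^q_iB_{u^q}-B_{u^i}B$ differs from it by $-\operatorname{ad}_B(B_{u^i})$, which is nonzero in general, and no bookkeeping makes it cancel. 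Consequences:
\begin{itemize}
\item your target identity $X^{(\mathrm m)}=(-1)^{\mathrm m-1}(\mathsf R_A-\mathsf R_B)^{\mathrm m}\operatorname{ad}_B^{\mathrm m-1}(B_u)$ is false already for $\mathrm m=1$;
\item its binomial expansion is not the first sum of \eqref{dark};
\item the extra term $\operatorname{ad}_B^2X^{(\mathrm m-1)}$ you called the main obstacle comes from this misreading, not from a real difficulty.
\end{itemize}

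The repair is immediate. Take $X^{(1)}=(\mathsf R_A-\mathsf L_B)B_u$ and write the recursion as $X^{(\mathrm m)}=-\operatorname{ad}_B(\mathsf R_A-\mathsf L_B)X^{(\mathrm m-1)}$. Since $\operatorname{ad}_B=\mathsf L_B-\mathsf R_B$ commutes with both $\mathsf L_B$ and $\mathsf R_A$ (e.g.\ $\operatorname{ad}_B(BY)=B\operatorname{ad}_B(Y)$), you get $X^{(\mathrm m)}=(-1)^{\mathrm m-1}(\mathsf R_A-\mathsf L_B)^{\mathrm m}\operatorname{ad}_B^{\mathrm m-1}(B_u)$ in one line, with no telescoping. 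Expanding $(\mathsf R_A-\mathsf L_B)^{\mathrm m}$ binomially gives exactly the first sum of \eqref{dark}. Your fallback $X^{(\mathrm m)}=\bigl(\operatorname{ad}_B(\mathsf L_B-\mathsf R_A)\bigr)^{\mathrm m-1}X^{(1)}$ gives the same result once $X^{(1)}$ is computed correctly.

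With this correction your proof is the paper's proof. The paper runs the same induction in index notation, regrouping the four terms of the recursion into $\operatorname{ad}^{\mathrm m}_B$ and merging the two resulting sums with Pascal's rule. Your commuting-operator formalism is a more compact way of doing that bookkeeping.
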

\begin{proof}
We have
\begin{equation}\label{d4}
\begin{aligned}
\mathcal N_L (\partial_{u^i}, \partial_{v^j}) & = [L\partial_{u^i}, L\partial_{v^j}] - L[L\partial_{u^i}, \partial_{v^j}] - L[\partial_{u^i}, L\partial_{v^j}] = \\
& = [A^\alpha_i \partial_{u^\alpha}, B^\beta_j \partial_{v^\beta}] - L[A_i^\alpha \partial_{u^\alpha}, \partial_{v^j}] - L[\partial_{u^i}, B_j^\beta \partial_{v^\beta}] = \\ 
& = A^q_i \pd{B_j^\beta}{u^q} \partial_{v^\beta} - \pd{A^\alpha_i}{v^q} B^q_j \partial_{u^\alpha} + \pd{A_i^q}{v^j} A_q^\alpha \partial_{u^\alpha} - \pd{B_j^q}{u^i} B_q^\beta \partial_{v^\beta} = \\
& = \Big( A^q_i \pd{B_j^\beta}{u^q} - \pd{B_j^q}{u^i} B_q^\beta \Big)\partial_{v^\beta} - \Big( \pd{A^\alpha_i}{v^q} B^q_j - \pd{A_i^q}{v^j} A_q^\alpha\Big) \partial_{u^\alpha} = \\
& = \Big(A^q_i 
(\mathrm {ad}_B^0 B_{u^q})^\beta_{j} - (\mathrm {ad}_B^0 B_{u^i})^q_{j} B_q^\beta \Big) \partial_{v^\beta} + \dots.
\end{aligned}    
\end{equation}
Here and further, the dots stand for the terms that contain $\partial_{u^\alpha}$. We also use $\mathrm {ad}_B^0 = \Id$.

We will prove the formula for terms that only contain $\partial_{v^\beta}$. For $\partial_{u^\alpha}$, the proof is similar. We prove it by induction. The formula \eqref{d4} is the base of our induction. Now assume that the formula holds for $\mathrm m$, that is
$$
\mathcal T^{(\mathrm m)}_L (\partial_{u^i}, \partial_{v^j}) = (-1)^{{\mathrm m}-1}\sum_{\mathrm k = 0}^{\mathrm m} (-1)^{\mathrm k} \binom{\mathrm m}{\mathrm k}(A^{\mathrm m - \mathrm k})^q_i (\mathrm {ad}_B^{\mathrm m - 1} B_{u^q})^s_{j} (B^{\mathrm k})^\beta_s \partial_{v^\beta} + \dots
$$
By construction, we have
$$
\begin{aligned}
  \mathcal T^{(\mathrm m + 1)}_L  (\partial_{u^i}, \partial_{v^j}) & = L^2 \mathcal T^{(\mathrm m)}_L (\partial_{u^i}, \partial_{v^j}) - L \mathcal T^{(\mathrm m)}_L (\partial_{u^i}, L \partial_{v^j}) +  \mathcal T^{(\mathrm m)}_L (L \partial_{u^i}, L \partial_{v^j}) - L \mathcal T^{(\mathrm m)}_L (L \partial_{u^i}, \partial_{v^j})  = \\
 & =  (-1)^{{\mathrm m}-1} \sum_{\mathrm k = 0}^{\mathrm m} (-1)^{\mathrm k} \binom{\mathrm m}{\mathrm k}(A^{\mathrm m - \mathrm k})^q_i (\operatorname{ad}^{\mathrm m - 1}_B B_{u^q})^s_{j} (B^{\mathrm k + 2})^\beta_s \partial_{v^\beta} - \\ 
 & - (-1)^{{\mathrm m}-1} \sum_{\mathrm k = 0}^{\mathrm m} (-1)^{\mathrm k} \binom{\mathrm m}{\mathrm k}(A^{\mathrm m - \mathrm k})^q_i B^l_j (\operatorname{ad}_B^{\mathrm m - 1} B_{u^q})^s_{l} (B^{\mathrm k + 1})^\beta_s \partial_{v^\beta} + \\
 & + (-1)^{{\mathrm m}-1} \sum_{\mathrm k = 0}^{\mathrm m} (-1)^{\mathrm k} \binom{\mathrm m}{\mathrm k} (A^{\mathrm m - \mathrm k + 1})^q_i B^r_j (\operatorname{ad}^{\mathrm m - 1}_B B_{u^q})^s_{r} (B^{\mathrm k})^\beta_s \partial_{v^\beta} - \\ 
 & - (-1)^{{\mathrm m}-1} \sum_{\mathrm k = 0}^{\mathrm m} (-1)^{\mathrm k} \binom{\mathrm m}{\mathrm k} (A^{\mathrm m - \mathrm k + 1})^q_i (\operatorname{ad}^{\mathrm m - 1}_B B_{u^q})^s_{j} (B^{\mathrm k + 1})^\beta_s \partial_{v^\beta}  + \dots  = 
 \end{aligned}
 $$
 $$
 \begin{aligned}
 & = - (-1)^{{\mathrm m}-1} \sum_{\mathrm k = 0}^{\mathrm m} (-1)^{\mathrm k} \binom{\mathrm m}{\mathrm k} (A^{\mathrm m - \mathrm k + 1})^q_i (\operatorname{ad}^{\mathrm m}_B B_{u^q})^s_{j} (B^{\mathrm k})^\beta_s \partial_{v^\beta} \\ & + (-1)^{{\mathrm m}-1} \sum_{\mathrm k = 0}^{\mathrm m} (-1)^{\mathrm k} \binom{\mathrm m}{\mathrm k} (A^{\mathrm m - \mathrm k})^q_i (\operatorname{ad}^{\mathrm m}_B B_{u^q})^s_{j} (B^{\mathrm k + 1})^\beta_s \partial_{v^\beta} + \dots = \\
 &  =  (-1)^{\mathrm m}\sum_{k = 0}^{\mathrm m + 1} (-1)^{\mathrm k} \binom{\mathrm m +1}{\mathrm k}(A^{\mathrm m + 1 - \mathrm k})^q_i (\operatorname{ad}^{\mathrm m}_B B_{u^q})^s_{j} (B^{\mathrm k})^\beta_s \partial_{v^\beta} + \dots.
\end{aligned}
$$
Thus, the formula is proved, as well as the lemma.
\end{proof}

Without loss of generality we may assume that $L$ is invertible. If it is not, then we can replace $L$ by $L+ c\,\Id$ for some constant $c$,  and the new operator will still have vanishing Nijenhuis torsion of level $\mathrm m$ due to Proposition \ref{prop:3.3}. Since the blocks $A$ and $B$ have no common eigenvalues, we know that at each point, there exists a polynomial $g(t) = g_1 t^{r - 1} + \dots + g_n$ such that
$$
g(L) = g_1 L^{r - 1} + \dots + g_n \Id = \left( \begin{array}{cc}
          A (u, v) & 0_{k \times s}  \\
          0_{s \times k} & 0_{s \times s}
     \end{array}\right).
$$
The coefficients of such a polynomial are locally smooth functions. By Proposition \ref{prop:3.3}, $\mathcal T^{(\mathrm m)}_{g(L)} = 0$. Applying Lemma \ref{lem:5.2} we get
$$
(\operatorname{ad}^{\mathrm m - 1}_A A_{v^r})^s_i (A^{\mathrm m})^\alpha_s \partial_{u^\alpha} = 0.
$$
As $A$ is non-degenerate, we conclude that $\operatorname{ad}^{\mathrm m - 1}_A A_{v^r} = 0$ for all $1 \leq r \leq s$. In a similar fashion, we get $\operatorname{ad}^{\mathrm m - 1}_B B_{u^i} = 0$ for all $1 \leq i \leq k$.   This proves items  (iii) and (iv) of Theorem  \ref{thm:2}.  The last statement of Theorem  \ref{thm:2} follows from Lemma \ref{lem:5.2}.


\section{Proof of Theorems \ref{thm:A} and \ref{thm:B}}\label{sect:6}

The proof is based on the following lemma

\begin{Lemma}\label{lem:6.1}
Let $L(t)$ be a smooth family of $\gl$-regular matrices such that $L$ and $L'_t$ commute at each point. Then $L(t)$ takes the following form
$$
L(t) = a_0(t) \Id + a_1(t) L(0) + a_2(t) L^2(0)  + \dots + a_{n-1}(t) L^{n-1}(0). 
$$
In particular, if $L(0)$ is an upper triangular Toeplitz matrix \eqref{eq1} (or its complex analog \eqref{eq1b}), then $L(t)$ is a matrix of the same type for any $t$.
\end{Lemma}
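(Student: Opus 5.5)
We will use the classical fact that the centralizer of a $\gl$-regular matrix $X$ in $\gl(n)$ is the $n$-dimensional commutative algebra $\mathbb R[X]=\operatorname{Span}(\Id, X,\dots,X^{n-1})$. The plan is to show that the whole curve $L(t)$ stays inside the fixed algebra $\mathcal A_0=\mathbb R[L(0)]$.

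First, for every $t$ the derivative $L'_t$ commutes with $L(t)$. Since $L(t)$ is $\gl$-regular, $L'_t$ lies in $\mathbb R[L(t)]$. We therefore want the algebras $\mathcal A_t:=\mathbb R[L(t)]$ to be independent of $t$. Note that $\mathcal A_t$ depends smoothly on $t$ as a point of the Grassmannian of $n$-planes: its basis $\Id, L(t),\dots,L(t)^{n-1}$ stays linearly independent precisely because of $\gl$-regularity.

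To prove constancy, consider $P_k(t)=L(t)^k$. Its derivative is
$$
P_k'(t)=\sum_{j} L^{j} L'_t L^{k-1-j}=k\,L^{k-1}L'_t ,
$$
using commutativity of $L$ with $L'_t$. This lies in $\mathcal A_t$, because $\mathcal A_t$ is an algebra containing both $L$ and $L'_t$. So the frame $P_0,\dots,P_{n-1}$ of $\mathcal A_t$ has derivatives inside $\mathcal A_t$, and a smooth family of subspaces spanned by a smooth frame whose derivatives stay in the span is constant. Concretely, write $P_k'=\sum_j \Gamma_{kj}(t)P_j$ with smooth coefficients; these are obtained by solving a linear system in the independent frame. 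Then for any linear functional $\phi$ vanishing on $\mathcal A_0$, the vector $(\phi(P_k(t)))_k$ satisfies a linear ODE with zero initial data, hence vanishes identically. Thus $\mathcal A_t=\mathcal A_0$, so $L(t)\in\mathcal A_0$, which is the required expansion with smooth $a_i(t)$.

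Smoothness of the $a_i$ follows because they are coordinates of $L(t)$ in the fixed basis $\Id, L(0), \dots, L(0)^{n-1}$. For the last claim, observe that if $L(0)$ is upper triangular Toeplitz, or of the complex form \eqref{eq1b}, then every polynomial in $L(0)$ has the same form, since such matrices form an algebra. The only delicate point is the subspace-constancy step, and the ODE argument above handles it. Alternatively, one could note that $L'_t\in\mathbb R[L(t)]$ gives $L'_t=q_t(L(t))$ and integrate directly. If the family is only defined on a neighborhood of $t=0$, the same argument applies on that interval.
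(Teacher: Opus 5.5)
Your proof is correct and complete. The paper does not actually prove Lemma~\ref{lem:6.1}: it is stated in Section~\ref{sect:6} and used immediately. So your argument fills in a step the authors treat as evident, rather than competing with a written proof.

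The ingredients are used in the right places:
\begin{itemize}
\item $\gl$-regularity of $L(t)$ at \emph{every} $t$ gives $L'_t\in\mathbb R[L(t)]$.
\item The same hypothesis gives linear independence of $\Id, L(t),\dots,L(t)^{n-1}$, and hence smooth coefficients $\Gamma_{kj}(t)$ (for instance via the inverse Gram matrix of the frame).
\item The annihilator-plus-ODE-uniqueness step then pins the moving algebra $\mathbb R[L(t)]$ to $\mathbb R[L(0)]$.
\end{itemize}

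Regularity along the whole interval is genuinely needed, not just at $t=0$. Take non-commuting regular $A,B$ and smooth $\phi,\psi$ with $\phi\ne 0$ on $[0,1)$, $\phi=0$ on $[1,\infty)$, $\psi=0$ on $(-\infty,1]$, $\psi\ne0$ on $(1,\infty)$. Then $L(t)=\phi(t)A+\psi(t)B$ commutes with $L'_t$ everywhere but leaves $\mathbb R[A]$. This happens exactly at $t=1$, which is where your frame degenerates.

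Your ODE argument also works for merely smooth families. By contrast, a Taylor-expansion argument of the kind used in the proof of Lemma~\ref{lem:7.1} would only control the jet at $t=0$, which is not enough in the smooth category.

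In Section~\ref{sect:6} the lemma is applied to $L_i$ as a function of several groups of variables $x_j$, $j\ne i$. Your one-parameter version suffices there: restrict to curves along which $x_i$ is fixed. The derivative along such a curve is a linear combination of the $\partial L_i/\partial x_j$, each of which commutes with $L_i$.
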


Next we can proceed as follows.

Let  $L$ be a $\gl$-regular Haantjes operator and $\mathsf p \in \mathsf M$ be an algebraically generic point for $L$. Then according to Theorem \ref{thm:2}  (splitting theorem for Haantjes operators) and Corollary \ref{cor:2.1}, in a neighbourhood of $\mathsf p$ there exists a local coordinate system 
$$
\underbrace{x_1^1,\dots, x_1^{m_1}}_{x_1},  \underbrace{ x_2^1,\dots, x_2^{m_2}}_{x_2}, \dots , \underbrace{x_s^1,\dots, x_s^{m_s}}_{x_s}
$$
in which $L$ takes a block diagonal form 
$
L = \operatorname{diag} (L_1, \dots, L_s),
$
where each block $L_i$ is similar to either a Jordan block with a real eigenvalue or a real Jordan block related to a pair of complex conjugate eigenvalues.

Without loss of generality we assume that $\mathsf p$ is the origin of this coordinate system, i.e.,  the coordinates of this point are all zeros. According to Theorems \ref{thm:1} and \ref{thm:1b},  for each block $L_i$ the coordinates $x_i=(x_i^1,\dots, x_i^{m_i})$ can be chosen in such a way that at each point with coordinates $(0, \dots, 0, x_i, 0,\dots, 0)$, the block $L_i$ is an upper triangular Toeplitz matrix \eqref{eq1} or its complex analog \eqref{eq1b}.   

However, according to the splitting theorem (Theorem \ref{thm:2}),  the dependence of $L_i$ on the other group of variables $x_j=(x_j^1,\dots, x_j^{m_j})$, $j\ne i$,  is such that 
$L_i$ and $\frac{\partial L_i}{\partial x_j}$ commute.   Therefore, by Lemma \ref{lem:6.1},  $L_i$ is an upper triangular Toeplitz matrix  (or, respectively, its complex analog) at each point $(x_1,\dots, x_s)$.

Thus we have constructed a local coordinate system in which $L$ has a block-diagonal form with all the blocks $L_i$ being upper triangular Toeplitz matrices (Theorem \ref{thm:B}).  Now, in this coordinate system, consider the constant Jordan matrix
$M = \operatorname{diag}(M_1,\dots, M_s)$, where $M_i$, $i=1,\dots, s$,  is the standard (constant) Jordan block of the same size as $L_i$  with a real eigenvalue $\lambda_i$  or a pair of complex conjugate eigenvalues $\lambda_i^{\pm} = a_i \pm \mathrm{i} b_i$, $b_i\ne 0$ depending on the type of $L_i$.  We require that the eigenvalues from different blocks be distinct, so that $M$ is $\gl$-regular.    It remains to notice that upper triangular Toeplitz matrix commutes with the standard Jordan block, so that $L$ belongs to the centraliser of $M$ and therefore can be written as $L=p(M)$, where $p(\cdot)$ is a polynomial of degree $n-1$ with coefficients depending on $x=(x_1,\dots, x_s)$. This completes the proof of Theorem \ref{thm:A}.

We conclude this section with an example showing that the algebraic genericity assumption in Theorem \ref{thm:A} is essential.
\begin{Ex}\label{ex:1}{\rm Consider the operator  $L$ in $\R^2(x,y)$:
$$
L = \begin{pmatrix}  0 & 1 \\ 0 & x^2 - y^2          \end{pmatrix}
$$
Assume, by contradiction,  that $L=p(M)$. Then  $M$ must be $\gl$-regular, since $L$ is $\gl$-regular. Consider the sets
$$
\begin{aligned}
\mathsf{Sing}_L &= \{ (x,y)\in\R^2~|~  \mbox{the eigenvalues of $L$ collide}\} = \{ x^2 - y^2 = 0\},\\
\mathsf{Sing}_M &= \{ (x,y)\in\R^2~|~  \mbox{the eigenvalues of $M$ collide}\}. 
\end{aligned}
$$

We obviously have $\mathsf{Sing}_M \subset \mathsf{Sing}_L$.   On the other hand,  if $(x,y)\not\in \mathsf{Sing}_M$,  then $M(x,y)$ is semisimple, and therefore  $L(x,y)=p(M(x,y))$ is semisimple also, implying  $(x,y)\not\in \mathsf{Sing}_L$. Thus, $\mathsf{Sing}_L \subset \mathsf{Sing}_M$ and finally $\mathsf{Sing}_M = \mathsf{Sing}_L$. 

However, it is known that $\mathsf{Sing}_M$ is always a smooth curve in $\R^2$ (see \cite{nij3}), whereas  $\mathsf{Sing}_M$ is not.  This contradiction shows that $L$ cannot be written as a polynomial of a Nijenhuis operator.

}\end{Ex}


\section{Appendix: Haantjes operators on complex manifolds}\label{appendix}


\subsection{Haantjes operators with no real eigenvalues}\label{subsect:7.1}

The splitting theorem (Theorem \ref{thm:2})  allows us to locally split every real Haantjes operator into two blocks $L = \operatorname{diag}(L_1,L_2)$ in such a way that the eigenvalues of $L_1$ are all real at a given point $\mathsf p$,  while the eigenvalues of $L_2$ at $\mathsf p$  are all complex (i.e., have non-zero imaginary part).  The operators of the second type possess one fundamental property explained in the next theorem.

\begin{Theorem}\label{thm:6}
Let $L$ be a Haantjes operator on a real even-dimensional manifold $\mathsf M$.   Assume that $L$ has no real eigenvalues at any point of $\mathsf M$. Then there exists a unique complex structure $J$ on $\mathsf M$ satisfying the following properties 
\begin{itemize}
\item[\rm (C1)] $J$ is a polynomial in $L$ with smooth coefficients, in particular, $JL = LJ$;
\item[\rm (C2)] The real part of each eigenvalue of the operator $JL$ is negative. 
\end{itemize}   
\end{Theorem}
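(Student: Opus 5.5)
The plan is to build $J$ by holomorphic functional calculus applied to $L$, and to get integrability of $J$ from the vanishing of its Haantjes torsion.

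\emph{Reduction to a spectral sign function.} On the complexification of each tangent space, $T_{\mathsf p}\mathsf M\otimes\mathbb C$ splits into the generalised eigenspaces of $L$. Suppose $J$ is a polynomial $q(L)$ with $J^2=-\Id$. Then $J$ is diagonalisable. On the generalised eigenspace of an eigenvalue $\lambda$, the operator $q(L)$ has the single eigenvalue $q(\lambda)$, so $J$ acts there as the scalar $q(\lambda)=\varepsilon_\lambda \mathrm{i}$ with $\varepsilon_\lambda=\pm1$. Consequently $JL$ has eigenvalue $\varepsilon_\lambda\mathrm{i}\lambda$, whose real part is $-\varepsilon_\lambda \operatorname{Im}\lambda$. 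Condition (C2) holds exactly when $\varepsilon_\lambda=\operatorname{sign}\operatorname{Im}\lambda$, which is possible because $\operatorname{Im}\lambda\neq 0$ by assumption. So $J$ must be $+\mathrm{i}$ on the sum of generalised eigenspaces with $\operatorname{Im}\lambda>0$, and $-\mathrm{i}$ on its complex conjugate. This proves uniqueness immediately. It also shows that this $J$ is real, since it commutes with complex conjugation, and that $J^2=-\Id$.

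\emph{Existence and smoothness.} The main obstacle is showing that $J$ is a polynomial in $L$ with smooth coefficients even where the Segre characteristic of $L$ jumps. I would first try a factorisation argument. Since $L$ has no real eigenvalues, at each point $\chi_L=\chi_+\chi_-$, where $\chi_\pm$ collect the roots in the upper and lower half-planes. These two factors are coprime, and $\chi_-=\overline{\chi_+}$. By Corollary~\ref{cor:5.1} (with $\mathbb K=\mathbb C$, applied to the real-analytic dependence of the roots on the coefficients), this factorisation extends to a neighbourhood with smooth complex coefficients. The extended factors still split the roots by the sign of their imaginary part, because no root crosses the real axis.

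Next I would solve the Bezout identity $a\chi_++b\chi_-=1$ with $\deg a<\deg\chi_-$ and $\deg b<\deg\chi_+$. Its coefficients are smooth, since they come from inverting the Sylvester matrix, whose determinant is the nonvanishing resultant. Then $P_+=b(L)\chi_-(L)$ is the spectral projector onto the $\operatorname{Im}\lambda>0$ part, and I set
\[
J=\mathrm{i}\,(2P_+-\Id)=q(L),
\]
where $q$ is a polynomial with smooth coefficients. The coefficients of $q$ are real after taking real parts, using $\overline{P_+}=P_-$. An alternative route is the Riesz integral $\frac{1}{2\pi\mathrm i}\oint_{\Gamma_+}(z-L)^{-1}\,dz$, combined with Cayley--Hamilton. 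These definitions are local, but uniqueness glues them into a global $J$.

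\emph{Integrability.} By Proposition~\ref{prop:3.3}, $J=q(L)$ is Haantjes. For $J^2=-\Id$, one checks directly from \eqref{nij_m} that $\mathcal N_J(J\xi,\eta)=\mathcal N_J(\xi,J\eta)=-J\mathcal N_J(\xi,\eta)$. Hence $\mathcal N_J(J\xi,J\eta)=-\mathcal N_J(\xi,\eta)$. Substituting these into \eqref{haa_m} gives
\[
\mathcal H_J=-4\,\mathcal N_J.
\]
Therefore $\mathcal N_J=0$, and the Newlander--Nirenberg theorem shows that $J$ is a complex structure. This gives (C1) and (C2) together with uniqueness.
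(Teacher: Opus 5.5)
Your proposal is correct and follows essentially the same route as the paper. You split $\chi_L=\chi_+\chi_-$ by the sign of the imaginary parts of the roots (smooth by Lemma~\ref{lem:5.1}), use the B\'ezout identity to write $J=\mathrm{i}(2P_+-\Id)$ as a real polynomial in $L$, and then apply Proposition~\ref{prop:3.3}, the identity $\mathcal H_J=-4\mathcal N_J$, and Newlander--Nirenberg; your explicit pointwise uniqueness argument and direct check of $\mathcal H_J=-4\mathcal N_J$ usefully fill in two steps the paper leaves implicit or only cites.
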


\begin{Remark}\label{rem:7.1} {\rm In particular, this theorem says that  $L$ preserves the complex structure and, therefore, can be treated as a $\mathbb C$-linear operator.  In complex coordinates $z_1,\dots, z_n$,   this operator is defined by a complex $n\times n$ matrix $L^{\mathbb C}$.  Moreover,  Property (C2) implies that the imaginary parts of the eigenvalues of $L^{\mathbb C}$ are all positive. This fact will be important for Theorem \ref{thm:7} below.
}\end{Remark}

\begin{proof}  In the case of a Nijenhuis operator $L$,   the existence of $J$ with the required properties was proved in \cite{nij1}.  
The proof remains basically the same:  we construct an almost algebraic structure $J$ by using purely algebraic instruments and then show that $J$ satisfies the Nirenberg--Newlander integrability condition, i.e., $\mathcal N_J = 0$.  The only difference is that now we have a seemingly weaker condition $\mathcal H_J=0$,   but  $J^2 =-\Id$ immediately implies   $\mathcal H_J=-4\,\mathcal N_J$ (see e.g. Section 4.7 in \cite{Kosmann}) so that in fact,  these conditions are equivalent.

The Haantjes case is even simpler, since we are allowed to use polynomials with functional coefficients.  So we will give an independent proof based on an explicit formula for $p(t)$ such that $J=p(L)$.        

Since the eigenvalues of $L$ can be partitioned into pairs of complex conjugate eigenvalues $\lambda^{\pm}_k= a_k \pm \mathrm{i}\, b_k$ (not necessarily distinct), $b_k >0$, $k=1,\dots, n=\frac{1}{2} \dim \mathsf M$, we can uniquely write the characteristic polynomial  of $L$  as the product of two monic polynomials  $\chi_L(t)=\chi_1(t)\chi_2(t)$ in such a way that
the roots of $\chi_1(t)$ are $\lambda_1^+,\dots, \lambda_n^+$ and the roots of $\chi_2(t)$ are $\lambda_1^-,\dots, \lambda_n^-$.
It follows from Lemma \ref{lem:5.1} that the coefficients of  $\chi_1(t)$ and $\chi_2(t)$ are smooth complex-valued functions, moreover, we notice that $\chi_2(\bar t)=\overline{\chi_1(t)}$, where `bar' denotes complex conjugation. 

Since $\chi_1(t)$ and $\chi_2(t)$ are coprime and both of degree $n$, there exist unique polynomials $q_1(t)$ and $q_2(t)$ of degree at most $n-1$ such that
\begin{equation}
\label{eq:Euclid}
\chi_1 (t) q_1(t) + \chi_2 (t) q_2(t) = 1.
\end{equation}

Moreover, since $\chi_2 (\bar t) = \overline{\chi_1(t)}$, then $q_2(\bar t) = \overline{q_1(t)}$.  Consider the polynomial $p$ of degree $2n-1$ defined as
$$
p(t) = -2\mathrm{i} \chi_1 (t) q_1(t) + \mathrm{i}
$$
or, equivalently, in view of \eqref{eq:Euclid}
$$
p(t) = 2\mathrm{i} \chi_2 (t) q_2(t) - \mathrm{i}.
$$
Notice that $p(\bar t) = \overline{p(t)}$, which means that $p$ is a polynomial with real coefficients.   The algebraic meaning of the operator  $p(L)$ is very natural: each generalised eigenspace of $L$  (as a subset of the complexified tangent space $(T_{\mathsf p} M)^{\mathbb C}$)  is $p(L)$-invariant and on the generalised eigenspace $E_{\lambda^{\pm}}$, the operator $p(L)$ acts as multiplication by $\pm \mathrm{i}$.   So $J=p(L)$ is a (real) operator on $T_{\mathsf p} M$ satisfying the condition $J^2=-\Id$. In other words, from the point of view of the whole manifold $\mathsf M$, the operator $J$ is an almost complex structure.   We also notice that the eigenvectors of $J=p(L)$ coincide with those of $L$,  and the {\it new} eigenvalues are $\lambda_{\mathrm{new},k}^{\pm}= \pm \,\mathrm{i} \cdot \lambda_k^{\pm} =  \pm \,\mathrm{i}  (a_k \pm {\mathrm i}\, b_k)= -b_k \pm \mathrm{i} a_k$. The real parts $-b_k$ of these eigenvalues are all negative as required in (C2).    

By Proposition \ref{prop:3.3}, $J$ is a Haantjes operator,  i.e. $\mathcal H_J=0$  which is equivalent, as mentioned above, to $\mathcal N_J=0$. Hence, $J$ is a complex structure on $\mathsf M$ by the Nirenberg--Newlander theorem.  \end{proof}


\subsection{Haantjes operators on complex manifolds}\label{subsect:7.2}

Let $(\mathsf M, J)$ be a complex manifold of complex dimension $n$, $z_1,\dots, z_n$ complex coordinates and $L$ an operator on $\mathsf M$ which commutes with the complex structure $J$.
This operator can be given in two different ways: as a complex $n\times n$ matrix  $L=\Bigl( L^i_j(z)\Bigr)$ and as $2n\times 2n$ real matrix  in coordinates $x_1, y_1, x_2, y_2, \dots$,  where   $z_k=x_k + \mathrm{i}\, y_k$.  To distinguish these interpretations we will use $L_{\R}$ when considering $L$ as an operator acting on the tangent space $T_{\mathsf p} \mathsf M$ treated as $2n$-dimensional vector space.   

If the components $L^i_j(z)$ are holomorphic in $z=(z_1,\dots,z_n)$,  and $L=\Bigl( L^i_j(z)\Bigr)$ satisfies the Haantjes identity
$$
\mathcal H_L (\partial_{z_i}, \partial_{z_j})=0, \quad i,j=1,\dots,n, 
$$
then we will refer to $L$ as a {\it holomorphic Haantjes operator} on a complex manifold $(\mathsf M, J)$. In this case, $L_{\R}$ is automatically a Haantjes operator on $\mathsf M$ as a $2n$-dimensional real manifold.   However,  for $L_{\R}$ to be Haantjes,  $L$ does not need to be holomorphic.  

\begin{Theorem}\label{thm:7}    
Let $L$ be a field of $\mathbb C$-linear endomorphisms on a complex manifold $(\mathsf M^n, J)$ defined, in complex coordinates $z_1,\dots,z_n$, by a complex matrix $L(z)=\Bigl( L^i_j(z)\Bigr)$ with smooth but not necessarily holomorphic entries.
\begin{itemize}
\item Assume that the `complex Haantjes torsion' of $L$ vanishes, i.e.,
\small{\begin{equation}
\label{eq:complNij}
\mathcal H_L (\partial_{z_\alpha}, \partial_{z_\beta}) =  L^2 \mathcal N_L(\partial_{z_\alpha}, \partial_{z_\beta}) + \mathcal N_L (L\partial_{z_\alpha}, L\partial_{z_\beta}) - L \mathcal N_L (L\partial_{z_\alpha}, \partial_{z_\beta}) - L \mathcal N_L (\partial_{z_\alpha}, L\partial_{z_\beta}) =0 
\end{equation}}
and  $\dfrac{\partial L}{\partial \bar z_\alpha}$ commutes with $L$ for $\alpha=1,\dots,n$.  Then $L_{\R}$ is a Haantjes operator in the real sense.

\item Conversely, if
$L_{\R}$ is Haantjes in the real sense and each eigenvalue of $L$ has positive imaginary part, then $\dfrac{\partial L}{\partial \bar z_\alpha}$ commutes with $L$ and \eqref{eq:complNij} holds.

\end{itemize} 
\end{Theorem}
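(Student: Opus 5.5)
We will work in the complexified tangent bundle and decompose the real Haantjes torsion of $L_{\R}$ by type. Since $L$ commutes with $J$, its $\mathbb C$-linear extension preserves $T^{1,0}=\operatorname{Span}(\partial_{z_\alpha})$ and $T^{0,1}=\operatorname{Span}(\partial_{\bar z_\alpha})$. On $T^{1,0}$ it acts by the matrix $L$, and on $T^{0,1}$ by $\bar L$. The real torsion $\mathcal H_{L_\R}$ vanishes iff its complex-bilinear extension vanishes on the three pairs of types: $(\partial_{z_\alpha},\partial_{z_\beta})$, $(\bar\partial,\bar\partial)$, which is the conjugate of the first, and the mixed pair $(\partial_{z_\alpha},\partial_{\bar z_\beta})$.

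For the $(1,0),(1,0)$ part we use that $[\xi,\eta]$ for $\xi=L^a\partial_{z_\alpha}$, $\eta=L^b\partial_{z_\beta}$ involves only $\partial_z$-derivatives of the coefficients. Hence the expression \eqref{eq:def1} evaluated on such vectors is exactly the formal ``complex Haantjes torsion'' \eqref{eq:complNij}, where the $\bar z$-dependence enters only as parameters. So this part vanishes iff \eqref{eq:complNij} holds.

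The key step is the mixed part. Here we mimic Lemma \ref{lem:5.2} with $A=L$ acting on $u=z$ and $B=\bar L$ acting on $v=\bar z$; formally the decomposition $T^{1,0}\oplus T^{0,1}$ is a block-diagonal splitting as in \eqref{form2}. The computation in Lemma \ref{lem:5.2} is purely algebraic in commuting coordinate fields, so it applies verbatim to the frame $\partial_{z},\partial_{\bar z}$. With $\mathrm m=2$ it gives
\[
\mathcal H(\partial_{z_i},\partial_{\bar z_j}) = -\textstyle\sum_k (-1)^k\binom2k (L^{2-k})^q_i(\operatorname{ad}_{\bar L}\bar L_{z^q})^s_j(\bar L^k)^\beta_s\,\partial_{\bar z_\beta} + \sum_k(-1)^k\binom2k(\bar L^{2-k})^r_j(\operatorname{ad}_L L_{\bar z^r})^s_i(L^k)^\alpha_s\,\partial_{z_\alpha}.
\]
Since $\bar L_{z^q}=\overline{L_{\bar z^q}}$, the first part is the conjugate of the second. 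Thus the first bullet follows at once: if $[L,L_{\bar z_r}]=0$, the mixed part vanishes. Together with \eqref{eq:complNij} and its conjugate, this gives $\mathcal H_{L_\R}=0$.

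For the converse, assume $\mathcal H_{L_\R}=0$. Then \eqref{eq:complNij} holds by the type argument. For the commutation we must show that $X_r:=\operatorname{ad}_L L_{\bar z_r}$ vanishes. Setting the $\partial_{z_\alpha}$-component of the mixed torsion to zero gives, in matrix form with the index $r$ contracted against $\bar L$, a Sylvester-type equation: the linear map
\[
(X_r)_r\mapsto \sum_{k}(-1)^k\binom2k L^k\,X_r\,(\bar L^{2-k})^r_j,
\]
that is, essentially $X\mapsto (L\otimes 1-1\otimes\bar L^{\top})^2X$, must be zero. Its eigenvalues are $(\lambda_a-\bar\lambda_b)^2$, where $\lambda_a$ runs over the eigenvalues of $L$ and $\bar\lambda_b$ over those of $\bar L$. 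These are nonzero because $\operatorname{Im}\lambda_a>0>\operatorname{Im}\bar\lambda_b$, so the map is invertible and hence $X_r=0$.

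The main obstacle is checking that the index bookkeeping indeed produces this invertible Sylvester operator, i.e.\ that the mixed-torsion expression equals $(\operatorname{ad}$-twisted$)$ $\sum_k(-1)^k\binom2k L^k X \bar L^{2-k}$ acting on the tensor $X^{\,s}_{i r}$. This is where the positivity of the imaginary parts is used.
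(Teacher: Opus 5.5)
Your proof is correct. Its skeleton coincides with the paper's: pass to the frame $\partial_{z_\alpha},\partial_{\bar z_\alpha}$, where $L_\R=\operatorname{diag}(L,\bar L)$; split $\mathcal H_{L_\R}$ by type; identify the $(1,0),(1,0)$ part with \eqref{eq:complNij}; and read the mixed part off Lemma \ref{lem:5.2} with $A=L$, $B=\bar L$. The genuine difference is in the converse.

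The paper does not invert anything directly. It reuses the argument that follows Lemma \ref{lem:5.2} in the proof of Theorem \ref{thm:2}:
\begin{itemize}
\item shift $L$ by a constant to make it invertible;
\item pick a polynomial $g$ with $g(L)=\operatorname{diag}(A,0)$, which is possible because $A$ and $B$ have disjoint spectra;
\item invoke Proposition \ref{prop:3.3} to get that $g(L)$ is again Haantjes;
\item observe that Lemma \ref{lem:5.2} for $g(L)$ then collapses to $A^2\operatorname{ad}_A A_{v^r}=0$, so $\operatorname{ad}_A A_{v^r}=0$ by invertibility of $A$.
\end{itemize}

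You instead note that the $\partial_{z_\alpha}$-component of $\mathcal H(\partial_{z_i},\partial_{\bar z_j})$ is $(L\otimes 1-1\otimes\bar L^{\top})^2$ applied to the tensor $X^\alpha_{ir}=(\operatorname{ad}_L L_{\bar z_r})^\alpha_i$. Here $L$ acts on $\alpha$, $\bar L^{\top}$ acts on $r$, and $i$ is passive. The bookkeeping you flagged does check out, since $\sum_k(-1)^k\binom{2}{k}(\bar L^{2-k})^r_j(L^kX_r)^\alpha_i$ is exactly this operator applied to $X$. Its eigenvalues $(\lambda_a-\bar\lambda_b)^2$ are nonzero, so $X=0$.

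Your route is more elementary and self-contained: it works pointwise and needs neither Proposition \ref{prop:3.3} nor the auxiliary polynomial $g$. In the complex setting $g$ has complex coefficients, so $g(L_\R)$ lives only on the complexified tangent bundle. The paper therefore has to lean on the `purely algebraic' extension of Proposition \ref{prop:3.3}, which your argument avoids. It also extends verbatim to torsion of level $\mathrm m$, with exponent $\mathrm m$. The paper's route buys brevity by reusing the splitting-theorem argument.

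One small imprecision: the $\partial_{\bar z}$-part of $\mathcal H(\partial_{z_i},\partial_{\bar z_j})$ is not literally the conjugate of its own $\partial_z$-part. Reality of $\mathcal H_{L_\R}$ identifies it with minus the conjugate of the $\partial_z$-part of $\mathcal H(\partial_{z_j},\partial_{\bar z_i})$. This is harmless, since for the first bullet you only need $[\bar L,\bar L_{z_q}]=\overline{[L,L_{\bar z_q}]}=0$.
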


Here is an example showing that the assumption about eigenvalues of $L$ in the converse statement is essential.

\begin{Ex}\label{ex:7.1}{\rm
Let  $L = \begin{pmatrix}  1 &  \bar z_1 \\ - \frac{1}{\bar z_1} & -1   \end{pmatrix}$ or, in the real interpretation, 
$$
L_{\R}=
\begin{pmatrix}
1  & 0& x_1 & y_1 \\
 0 & 1 &  - y_1 & x_1 \\
 - \frac{x_1}{x_1^2+y_1^2}&  \frac{y_1}{x_1^2+y_1^2}& -1 & 0 \\
 - \frac{y_1}{x_1^2+y_1^2}&- \frac{x_1}{x_1^2+y_1^2} & 0 & -1 
\end{pmatrix}
$$
This is a Haantjes operator, but the derivative $\frac{\partial L}{\partial \bar z_1}=\begin{pmatrix}   0 & 1 \\ \frac{1}{\bar z_1^2} & 0\end{pmatrix}$ does not commute  with $L$.
}\end{Ex}

Theorems \ref{thm:6} and \ref{thm:7} (see also Remark \ref{rem:7.1}) immediately imply the following corollary.

\begin{Corollary}\label{cor:7.1}  Let $L$ be a Haantjes operator on a real even-dimensional manifold $\mathsf M^{2n}$ with no real eigenvalues, and $J$ be the complex structure on $\mathsf M$ from Theorem \ref{thm:6}.  Then in complex coordinates $z_1,\dots, z_n$,  the operator $L$ is naturally given by a complex $n\times n$ matrix $L^{\mathbb C}= \Bigl( L^i_j(z)\Bigr)$ with  $L^i_j(z)$ being complex valued functions\footnote{In other words,  $L = (L^{\mathbb C})_\R$.}  such that 
\begin{itemize}
\item the `complex Haantjes torsion' of $L^{\mathbb C}$ vanishes, i.e.,
$\mathcal H_{L^{\mathbb C}} (\partial_{z_\alpha}, \partial_{z_\beta}) =  0$.
\item  $L^{\mathbb C}$ commutes with $L^{\mathbb C}_{\bar z_\alpha} = \Bigl( \tfrac{\partial L^i_j}{\partial \bar z_\alpha}\Bigr)$. 
\end{itemize}   

\end{Corollary}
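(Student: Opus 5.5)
The corollary should follow by chaining Theorem \ref{thm:6} with the converse part of Theorem \ref{thm:7}. The only real work is to check that the hypotheses of Theorem \ref{thm:7} hold for the complex structure produced by Theorem \ref{thm:6}.

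First, I apply Theorem \ref{thm:6} to $L$, which is a real Haantjes operator with no real eigenvalues. This gives a unique complex structure $J=p(L)$ that commutes with $L$, and for which every eigenvalue of $JL$ has negative real part. Because $J$ is integrable (Newlander--Nirenberg, as in the proof of Theorem \ref{thm:6}), there are local holomorphic coordinates $z_1,\dots,z_n$. Since $L$ commutes with $J$, at each point $L$ is $\mathbb C$-linear on $(T_{\mathsf p}\mathsf M, J)$. It is therefore represented in the frame $\partial_{z_\alpha}$ by a complex $n\times n$ matrix $L^{\mathbb C}(z)$ with smooth complex-valued entries, and $(L^{\mathbb C})_\R=L$ holds by construction.

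Next, I verify the eigenvalue hypothesis of Theorem \ref{thm:7}. On $(T_{\mathsf p}\mathsf M,J)$, multiplication by $\mathrm i$ is $J$, so the eigenvalues of $L^{\mathbb C}$ are the eigenvalues $\mu$ of $L$ on the $+\mathrm i$-eigenspace of $J$ in the complexification, i.e. on the $(1,0)$-vectors. On that subspace $JL$ acts with eigenvalues $\mathrm i\mu$. Condition (C2) says $\operatorname{Re}(\mathrm i\mu)=-\operatorname{Im}\mu<0$, so $\operatorname{Im}\mu>0$. This is exactly the content of Remark \ref{rem:7.1}. The one point to be careful about is the identification of the $(1,0)$ part with the generalised eigenspaces $E_{\lambda^+}$ on which $p(L)=+\mathrm i$, and this is immediate from the construction of $p$. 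I also need the sign convention of the identification $\mathbb C^n\cong\R^{2n}$ to be consistent with $J$ acting as $\mathrm i$, rather than $-\mathrm i$ as it would under the conjugate convention.

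Finally, since $L=(L^{\mathbb C})_\R$ is Haantjes in the real sense and all eigenvalues of $L^{\mathbb C}$ have positive imaginary part, the converse statement of Theorem \ref{thm:7} applies. It gives $\mathcal H_{L^{\mathbb C}}(\partial_{z_\alpha},\partial_{z_\beta})=0$ and $[L^{\mathbb C},L^{\mathbb C}_{\bar z_\alpha}]=0$, which is the claim. I expect no genuine obstacle: everything substantive is carried by Theorems \ref{thm:6} and \ref{thm:7}, and the only step needing care is the eigenvalue-sign bookkeeping in the second paragraph.
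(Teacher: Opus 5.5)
Your proposal is correct and follows the paper's own argument, which simply cites Theorems \ref{thm:6} and \ref{thm:7} together with Remark \ref{rem:7.1}: apply Theorem \ref{thm:6} to obtain $J$, use (C2) to see that the eigenvalues of $L^{\mathbb C}$ have positive imaginary part, and then invoke the converse part of Theorem \ref{thm:7}. Your explicit sign check via $JL=\mathrm{i}L$ on the $(1,0)$-vectors is a valid justification of Remark \ref{rem:7.1}, which the paper states without proof.
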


\begin{proof}[Proof of Theorem \ref{thm:7}] We start with the first statement (which is much easier to check).

Consider the real coordinate system $x_1, \dots, x_n, y_1,\dots, y_n$, where $z_\alpha = x_\alpha + \mathrm{i}y_\alpha$
We need to check that the Haantjes torsion of  $\mathcal H_{L^{\R}}(\xi,\eta)$ vanishes for $\xi,\eta$  being any pair of basis vector fields $\partial_{x_1}, \dots, \partial_{x_n},\partial_{y_1}, \dots, \partial_{y_n}$. 

Our first observation is that these basis vector fields can be replaced with a collection of their linear combinations with constant coefficients provided the transition matrix $C$ is invertible.  Since this computation is purely algebraic  and involves only the components of $L$ and its partial derivatives w.r.t. $x_\alpha$ and $y_\alpha$,  we may also assume that the components of the transition matrix $C$ are complex numbers. This allows us to replace  
$\partial_{x_1}, \dots, \partial_{x_n},\partial_{y_1}, \dots, \partial_{y_n}$ with the vector fields
$\partial_{z_\alpha}, \partial_{\bar z_\alpha}$, $\alpha=1,\dots,n$.

Also for our computation,  we can use $\partial_{z_\alpha}, \partial_{\bar z_\alpha}$ as a new basis in which the matrix of $L_\R$ has the following block diagonal form:
\begin{equation}
\label{eq:Lreal}
L_\R = \begin{pmatrix} L  & 0 \\
0 & \bar L\end{pmatrix} 
\end{equation}

Thus, the vanishing of the Haantjes torsion for $L_\R$  means that for all $i,j=1,\dots,n$ we have
$$
\mathcal H_{L_\R} (\partial_{z_i}, \partial_{z_j})=0,\quad
\mathcal H_{L_\R} (\partial_{\bar z_i}, \partial_{z_j})=0 \quad\mbox{and} \quad
\mathcal H_{L_\R} (\partial_{\bar z_i}, \partial_{\bar z_j})=0. 
$$

Due to the above block-diagonal structure,  the first condition is equivalent to \eqref{eq:complNij}. 
The third condition is obtained from the first one by complex conjugation and, therefore, also follows from \eqref{eq:complNij}.  It remains to  
understand the meaning of the second condition.  We claim that  $\mathcal H_{L_\R} (\partial_{z_i}, \partial_{\bar z_j})=0$ follows from the condition that $L_{\bar z_\alpha}$ commutes with $L$ and, conversely, the commutativity relation $L_{\bar z_\alpha} L = L \, L_{\bar z_\alpha}$ follows from the vanishing of $\mathcal H_{L_\R} (\partial_{z_i}, \partial_{\bar z_j})$  under the additional assumption that $L$ and $\bar L$ have no common eigenvalues (this assumption is guaranteed by the fact that the imaginary part of each eigenvalue of $L$ is positive).

To justify this statement, we apply that part of the proof of Theorem \ref{thm:2}, where we discussed
necessary and sufficient conditions for a block-diagonal operator  $L=\begin{pmatrix}  A(u,v) & 0 \\ 0 & B(u,v) \end{pmatrix}$ to be Haantjes. Namely,  it was shown that if  $A_{v^\alpha} A = A \, A_{v^\alpha}$ and $B_{u^\beta} B = B \, B_{u^\beta}$ for all $\alpha,\beta$, then 
$\mathcal H_L(\partial_{u^i}, \partial_{v^j})=0$, and, conversely, under additional condition that $A$ and $B$ have no common eigenvalues, 
$\mathcal H_L(\partial_{u^i}, \partial_{v^j})=0$ for all $i,j$ implies $A_{v^\alpha} A = A \, A_{v^\alpha}$ and $B_{u^\beta} B = B \, B_{u^\beta}$.

In our current situation, the only difference is that now we work with a complex basis $\partial_{z_\alpha}, \partial_{\bar z_\alpha}$ of the complexified tangent space instead of a real basis $\partial_{u^i}, \partial_{v^j}$.  However, the proof in the real case (see Lemma \ref{lem:5.2} and discussion just after the Lemma) was purely algebraic, i.e., the conclusion was derived from the polynomial relations between the components of $L$ and their partial derivatives given by the Haantjes condition $\mathcal H_L = 0$.  Thus,  these arguments still work for basis vectors with complex coefficients and  $L_\R = \begin{pmatrix}  L & 0 \\ 0 & \bar L \end{pmatrix}$, i.e., for $A=L$, $B=\bar L$.  It remains to notice that in this setting, condition (2) means exactly that $L$ commutes with $L_{\bar z_\alpha}$  and $\bar L$ commutes with $\bar L_{z_\alpha}$ (which is the same due to duality between $z$ and $\bar z$).  This completes the proof. 
\end{proof}

\subsection{Complex operators \texorpdfstring{$L$}{L} such that
\texorpdfstring{$[L,L_{\bar z_i}]=0$}{[L, Lbar zi] = 0}}\label{subsect:7.3}

Let $L(z)$,  $z=(z_1,\dots, z_k)\in \mathbb C^k$ be a complex $n\times n$ matrix depending on complex parameters. We assume that the entries $L^i_j(z)$ are $C^\infty$-smooth in $z$, but not necessarily holomorphic. 
In this appendix we prove several properties of $L(z)$ under the two conditions:
\begin{itemize}
\item $L(z)$ is $\gl$-regular and algebraically generic;

\item $L(z)$ commutes with $L_{\bar z_i}=\frac{\partial}{\partial \bar z_i} L(z)$, i.e.
$
L\,L_{\bar z_i} = L_{\bar z_i} L
$  for all $i=1,\dots,k$. 
\end{itemize}

For the purposes of our paper, $L(z)$ should be understood as a field of $\mathbb C$-linear (but not necessarily holomorphic) endomorphisms on a complex manifold.  But in this appendix,  the tensorial nature of $L(z)$ does not play any role and we may think of $z$ as just a parameter.

\begin{Theorem}\label{thm:8}
Let $L(z)$ be $\gl$-regular, algebraically generic and commute with $L_{\bar z_i}$, $i=1,\dots,k$.   If $L$ is semisimple, then there exists an eigenbasis $e_1(z), \dots, e_n(z)$ whose vectors $e_\beta(z)$ are holomorphic in $z$.  
More generally, if $\lambda=\lambda(z)$ is an eigenvalue of $L(z)$ of multiplicity $m$, then there exist vectors $e_1(z), \dots, e_m(z)$ holomorphic in $z$ such that $\operatorname{Span}(e_1(z),\dots, e_s(z))=\operatorname{Ker}(L-\lambda\, \Id)^s$, $s=1,\dots, m$.
\end{Theorem}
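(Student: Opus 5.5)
The plan is to show that each subspace $V_s(z):=\operatorname{Ker}(L(z)-\lambda(z)\Id)^s$, $s=1,\dots,m$, depends holomorphically on $z$. Concretely, we show that $\partial_{\bar z_i}v(z)\in V_s(z)$ for every smooth section $v(z)\in V_s(z)$. A holomorphic frame adapted to the flag $V_1\subset\dots\subset V_m$ is then obtained by a standard argument. The semisimple case is contained in this: take $m$ equal to the multiplicity and $s=1$ for each eigenvalue.

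\textbf{Preliminaries.} By algebraic genericity the Segre characteristic is locally constant, so the multiplicity $m$ of $\lambda$ is constant. By Corollary \ref{cor:5.1} (complex version of Lemma \ref{lem:5.1}), the factor $(t-\lambda)^m$ of $\chi_L$ has smooth coefficients, so $\lambda(z)$ is smooth. For the same reason the spectral projector $P$ onto the generalised eigenspace $W=V_m$ is a polynomial in $L$ with smooth coefficients, obtained from the Euclid relation as in the proof of Theorem \ref{thm:6}. Since $L$ is $\gl$-regular, every matrix commuting with $L$ is a polynomial in $L$. The hypothesis therefore gives $L_{\bar z_i}=q_i(L)$ for polynomials $q_i$ with smooth coefficients, at least pointwise; smooth dependence of the coefficients can be arranged, but in fact only commutativity is used below.

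\textbf{Key computation.} First identify $\lambda_{\bar z_i}$. Differentiate $\operatorname{tr}(PL)=m\lambda$. The term $\operatorname{tr}(P_{\bar z_i}L)$ vanishes: $P_{\bar z_i}$ is off-diagonal with respect to $W\oplus\ker P$, since $P^2=P$, while $L$ is block diagonal. Hence
$$
m\lambda_{\bar z_i}=\operatorname{tr}(PL_{\bar z_i}).
$$
On $W$, write $N=(L-\lambda)|_W$, which is a single nilpotent Jordan block. The restriction $L_{\bar z_i}|_W$ commutes with $N$, hence equals $c_0+c_1N+\dots$. The trace identity gives $c_0=\lambda_{\bar z_i}$, so $(L_{\bar z_i}-\lambda_{\bar z_i})|_W=N\,r_i(N)$ with $r_i(N)$ commuting with $N$.

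Now take a section $v\in V_s$ and differentiate $(L-\lambda)^s v=0$. Since $L_{\bar z_i}-\lambda_{\bar z_i}\Id$ commutes with $L-\lambda\Id$,
$$
(L-\lambda)^s\,\partial_{\bar z_i}v=-s\,(L_{\bar z_i}-\lambda_{\bar z_i})(L-\lambda)^{s-1}v .
$$
Because $(L-\lambda)^{s-1}v\in W$, the right-hand side equals $-s\,r_i(N)N^{s}v=0$. Hence $\partial_{\bar z_i}v\in\operatorname{Ker}(L-\lambda)^s=V_s$. This uses that $L-\lambda$ is invertible on the complementary invariant subspace, so every vector killed by $(L-\lambda)^s$ lies in $W$, and there in $V_s$.

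\textbf{From invariant flags to a holomorphic frame.} Choose a smooth frame $f_1,\dots,f_m$ adapted to the flag. The conclusion above says $\partial_{\bar z_i}f=f\,A_i$ with $A_i$ upper triangular. Equivalently, the map $z\mapsto(V_1(z)\subset\dots\subset V_m(z))$ into the partial flag variety satisfies the Cauchy--Riemann equations, i.e.\ is holomorphic. To see this concretely, in an affine chart of the Grassmannian write $V_s=\text{graph}$ of a matrix $X_s(z)$; the invariance condition translates into $\partial_{\bar z_i}X_s=0$. The flag manifold is the quotient of $GL$ by an upper triangular (Borel-type) subgroup, which admits holomorphic local sections. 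Composing the holomorphic flag map with such a local section gives vectors $e_1(z),\dots,e_m(z)$, holomorphic in $z$, with $\operatorname{Span}(e_1,\dots,e_s)=V_s$. For semisimple $L$ we apply this with $m=1$ to each eigenvalue and collect the vectors into an eigenbasis.

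\textbf{Main obstacle.} The delicate step is the identification $\lambda_{\bar z_i}=$ the scalar part of $L_{\bar z_i}$ on $W$; without it the right-hand side of the differentiated identity would not vanish. The trace argument with the smooth projector $P$ settles it, and this is exactly where smoothness of $\lambda$ and $P$ from algebraic genericity is needed.
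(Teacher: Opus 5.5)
Your proof is correct. Its overall architecture matches the paper's, with one genuine difference in the key lemma.

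\textbf{Shared structure.} Both arguments proceed in three steps.
First, show that the scalar part of $L_{\bar z}$ on the generalised eigenspace $W$ equals $\lambda_{\bar z}$.
Second, differentiate $(L-\lambda)^s v=0$ to obtain $\partial_{\bar z}v\in\operatorname{Ker}(L-\lambda)^s$.
Third, pass to a normalised frame. The paper's normalisation $e_s=(0,\dots,0,1,*,\dots,*)$ is exactly your affine graph chart of the flag variety. Its observation that $\partial_{\bar z}e_s$ lies in $\operatorname{Span}(e_1,\dots,e_s)$ while having vanishing first $s$ coordinates is your $\partial_{\bar z}X_s=0$.

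\textbf{Where you differ: the paper's Lemma~\ref{lem:7.1}.} The paper argues as follows.
It first shifts to $\lambda\equiv 0$ and expands $L$ as a formal series in $z,\bar z$.
It notes that all iterated $\bar z$-derivatives commute with $L$, so the $z^0$-part $A_0(\bar z)$ is formally a polynomial in $L(0)$.
It then uses $\det L\equiv 0$ and the formal factorisation of $\det A_0$ to force the scalar coefficient $f_0(\bar z)$ on the nilpotent block to vanish.
This needs infinite-order information.

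You replace all of this with the single first-order identity $m\lambda_{\bar z}=\operatorname{tr}(PL_{\bar z})$. The term $\operatorname{tr}(P_{\bar z}L)$ drops out because $P^2=P$ forces $P_{\bar z}$ to be off-diagonal with respect to $W\oplus\operatorname{Ker}P$, while $L$ is block diagonal. Your route is shorter, avoids the shift and the formal series, and shows transparently why the motion of $W$ does not affect the scalar part. The price is that you need a smooth spectral projector $P$ and a smooth $\lambda$. Both follow from the local constancy of multiplicities (algebraic genericity), via Corollary~\ref{cor:5.1} and the Euclid identity. The paper tacitly needs smooth $\lambda$ for its shift anyway.

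\textbf{Two routine points to state explicitly in a write-up.}
The factor supplied by Corollary~\ref{cor:5.1} is exactly $(t-\lambda(z))^m$ near the base point. This follows from continuity of roots together with the constant Segre characteristic.
A smooth frame adapted to the flag $V_1\subset\dots\subset V_m$ exists because $(L-\lambda)^s$ has constant rank $n-s$.
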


This statement immediately implies the following corollary. 

\begin{Corollary}\label{cor:7.2}
In the assumptions of Theorem \ref{thm:8},  let $\lambda_1(z), \dots, \lambda_r(z)$ denote the distinct eigenvalues of $L(z)$ and $m_1,\dots, m_r$ be their multiplicities, $\sum m_\alpha = n$. Then there exists a holomorphic (in $z$) transition matrix $C(z)$ such that 
$\widetilde L(z) = C^{-1} (z) L(z) C(z)$ is block diagonal: 
$$
\widetilde L(z) = \operatorname{diag} \bigl(A_1(z), \dots, A_r(z)\bigr),
$$ 
where $A_\alpha$-block is an $m_\alpha\times m_\alpha$ upper triangular matrix with $\lambda_\alpha(z)$ on the diagonal.
\end{Corollary}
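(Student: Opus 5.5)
We plan to derive Corollary \ref{cor:7.2} directly from Theorem \ref{thm:8}, applied separately to each eigenvalue. Because $L(z)$ is $\gl$-regular and algebraically generic, the distinct eigenvalues $\lambda_1(z),\dots,\lambda_r(z)$ have locally constant multiplicities $m_1,\dots,m_r$. Each $\lambda_\alpha$ is a simple root of the corresponding factor of the characteristic polynomial, and by Corollary \ref{cor:5.1} this factor is $(t-\lambda_\alpha)^{m_\alpha}$ with smooth coefficients; hence each $\lambda_\alpha(z)$ depends smoothly on $z$. For every $\alpha$, Theorem \ref{thm:8} gives vectors $e^{(\alpha)}_1(z),\dots,e^{(\alpha)}_{m_\alpha}(z)$, holomorphic in $z$, such that for each $s=1,\dots,m_\alpha$
$$
\operatorname{Span}\bigl(e^{(\alpha)}_1,\dots,e^{(\alpha)}_s\bigr)=\operatorname{Ker}(L-\lambda_\alpha\Id)^s .
$$

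We then let $C(z)$ be the matrix whose columns are all these vectors, listed block by block in the order $\alpha=1,\dots,r$. Each column is holomorphic, so $C(z)$ is holomorphic in $z$. Within a block, the $m_\alpha$ vectors span $\operatorname{Ker}(L-\lambda_\alpha\Id)^{m_\alpha}$, which is the generalised eigenspace of $\lambda_\alpha$ and has dimension $m_\alpha$; so they form a basis of it. Since $\mathbb C^n$ is the direct sum of the generalised eigenspaces, the full set of columns is a basis, and $C(z)$ is invertible.

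Finally, we determine the shape of $\widetilde L=C^{-1}LC$. Each generalised eigenspace is $L$-invariant, so $\widetilde L$ is block diagonal with an $m_\alpha\times m_\alpha$ block $A_\alpha$ for each $\alpha$. Inside a block, each kernel $\operatorname{Ker}(L-\lambda_\alpha\Id)^s$ is $L$-invariant, and $(L-\lambda_\alpha\Id)e^{(\alpha)}_s$ lies in $\operatorname{Ker}(L-\lambda_\alpha\Id)^{s-1}=\operatorname{Span}(e^{(\alpha)}_1,\dots,e^{(\alpha)}_{s-1})$. It follows that
$$
Le^{(\alpha)}_s=\lambda_\alpha e^{(\alpha)}_s+\sum_{j<s}c_j\,e^{(\alpha)}_j ,
$$
so $A_\alpha$ is upper triangular with $\lambda_\alpha(z)$ on the diagonal.

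The only point that needs attention is that the bases produced by Theorem \ref{thm:8} for different eigenvalues have to be combined consistently on a common neighbourhood. This is straightforward: we work locally and intersect the finitely many neighbourhoods involved. I do not expect any real obstacle, since the analytic content is entirely contained in Theorem \ref{thm:8}.
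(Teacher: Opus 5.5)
Your proposal is correct and is the argument the paper intends: the paper gives no separate proof and simply says the corollary follows immediately from Theorem~\ref{thm:8}, which amounts to taking the holomorphic flag bases $e^{(\alpha)}_1,\dots,e^{(\alpha)}_{m_\alpha}$ of the generalised eigenspaces as the columns of $C(z)$; the direct-sum decomposition then makes $C(z)$ invertible, and the flag $\operatorname{Ker}(L-\lambda_\alpha\Id)^s$ gives the upper triangular blocks, exactly as you describe. (One harmless slip: $\lambda_\alpha$ is a root of multiplicity $m_\alpha$ of $(t-\lambda_\alpha)^{m_\alpha}$, not a simple root; its smoothness comes from the $t^{m_\alpha-1}$-coefficient, and it is not needed for the corollary anyway.)
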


\begin{proof}[Proof of Theorem \ref{thm:8}]

We first notice that the properties of $L(z)$ from the assumptions of Theorem \ref{thm:8} are preserved under similarity transforms $L(z) \mapsto C^{-1}(z)L(z)C(z)$, where $C(z)$ is holomorphic in $z$ and under addition of a scalar matrix  $L(z) \mapsto L(z)+ c(z)\Id$ where $c(z)$ is smooth.    

We will prove the second, more general statement concerning eigenvalues with multiplicities. We need to show that the vectors $e_1,\dots, e_m$  (where $m$ is a multiplicity of $\lambda$)  can be chosen in such a way that $\frac{\partial}{\partial \bar z_i} e_s(z) = 0$ for all $i=1,\dots, k$ and $s=1,\dots, m$.  The proof is the same for all variables $z_i$.  For this reason,  to simplify notation we will write  $\frac{\partial}{\partial \bar z}$ and $\frac{\partial}{\partial  z}$ instead of $\frac{\partial}{\partial \bar z_i}$ and $\frac{\partial}{\partial  z_i}$ (formally this means that we work with only one parameter $z=z_i$, keeping the others fixed). 

We will need the following lemma.

\begin{Lemma}\label{lem:7.1}
Let $L(z)$ satisfy the assumptions of Theorem \ref{thm:8}. If $e(z)$ is an eigenvector of $L(z)$ with the eigenvalue $\lambda(z)\equiv 0$,  then $L_{\bar z} e = 0$. 
\end{Lemma}

\begin{proof}  We have $L(z) e(z)=0$ identically for all $z$. Then
$$
0=\frac{\partial}{\partial \bar z} \bigl( L(z) e(z) \bigr) = L_{\bar z} e + L e_{\bar z}.
$$
Hence, $L^2 e_{\bar z} = - L (L_{\bar z} e) = - L_{\bar z} Le=0$.  This implies that $e_{\bar z}$ is a generalised eigenvector of height 2 and  $L_{\bar z} e$ is an eigenvector.

Without loss of generality we may assume that we work in a neighborhood of the origin, i.e., $z_0=0$.  Consider the formal expansion of $L(z)$ in powers of $z$ and $\bar z$ and write it in the form
$$
L(z) \simeq A_0 + z A_1 + z^2 A_2 + z^3 A_3 + \dots 
$$
where $A_i$ is a formal matrix power series in $\bar z$. In particular,
$$
A_0 = L(0) + \bar z\, L_{\bar z}(0) + \frac{\bar z^2}{2!} \, L_{\bar z\bar z} + \frac{\bar z^3}{3!} \,  L_{\bar z\bar z\bar z} + \dots 
$$

It can be easily checked that the $\bar z$-derivative of all orders commute with $L$.   In other words, all the matrices from this series belong to the centraliser of $L(0)$.  Since $L(0)$ is $\gl$-regular,  each of these matrices is a polynomial in $L(0)$.  Hence,  $A_0$ can be understood as a polynomial in $L(0)$ with coefficients $f_i=f_i(\bar z)$ being formal power series in $\bar z$:
$$
A_0 = f_0 \Id  + f_1 L(0) + f_2 L^2(0) + \dots + f_{n-1} L^{n-1}(0) 
$$
Without loss of generality we may assume that $L(0)$ is a Jordan matrix whose first Jordan $m\times m$ block is related to the zero eigenvalue. Hence  $A_0$ has similar block diagonal structure and its first block is an upper triangular Toeplitz matrix of the form
$$
\begin{pmatrix}  
f_0 & f_1 &  f_2 & \dots & f_{n-1} \\
 &f_0 & f_1 & \ddots & \vdots \\
 & & f_0 &\ddots & f_2 \\
 & & & \ddots & f_1 \\
 & & & & f_0\\
 \end{pmatrix}, \qquad f_i = f_i(\bar z).
$$
Similarly, the first block of (the constant matrix) $L_{\bar z} (0)$ is 
$$
\begin{pmatrix}  
a_0 & a_1 &  a_2 & \dots & a_{n-1} \\
 &a_0 & a_1 & \ddots & \vdots \\
 & & a_0 &\ddots & a_2 \\
 & & & \ddots & a_1 \\
 & & & & a_0\\
 \end{pmatrix},
$$
where the first terms of the formal power series $f_0$ are as follows $f_0 = 0 + a_0 \bar z +\dots$.  It remains to notice that the number $a_0$ must be equal to zero.  The point is that $\lambda=0$ is an eigenvalue of $L(z)$ for all $z$. Therefore, the (formal) determinant of $A_0$ identically vanishes.  However,  $\det A_0$ is the product of 
$f_0^m(\bar z)$ and the non-zero determinants of the remaining blocks of $A_0$.   
Thus, $\det A_0=0$ implies $f_0(\bar z)=0$ and $a_0=0$. 

It remains to notice that in the chosen basis, the eigenvector $e(0)$ is the first basis vector $(1,0,\dots,0)^\top$. Since $a_0=0$, we have $L_{\bar z} (0) e(0)=0$, as stated. \end{proof}

Then for arbitrary $z$,  we uniquely choose the (normalised) generalised eigenvectors in such a way that
$$
\begin{aligned}
e_1(z) &= (1, * , * , * \dots )\\
e_2(z) &= (0,1, * , *, \dots)\\
e_3(z) &= (0, 0 , 1, * ,\dots) \\
& \mbox{etc.}
\end{aligned}
$$
where the stars $*$ denote some functions in $z$.
We claim that the vectors chosen in this way are all holomorphic in $z$, i.e., the $\bar z$-derivatives of all of their components (denoted by $*$) are equal to zero at any point $z$.

Let us prove this for $e_1(z)$ which is an eigenvector of $L(z)$.  Using Lemma \ref{lem:7.1},
we have 
$$
0 = \frac{\partial}{\partial \bar z} (L(z) e_1(z)) = L_{\bar z} (z) e_1(z) + L \tfrac{\partial}{\partial \bar z} e_1(z) = 0 + L \tfrac{\partial}{\partial \bar z} e_1(z).
$$
In other words, $\tfrac{\partial}{\partial \bar z} e_1(z)$ is an eigenvector of $L$, which must be proportional to $e_1(z)$.  But this is only possible if $\tfrac{\partial}{\partial \bar z} e_1(z)=0$, as required.

Similarly, for $e_s(z)$ we have   (using the fact that $L^{s-1} e_s(z)$ is a non-zero eigenvector of $L(z)$ and Lemma  \ref{lem:7.1})
$$
0 = \frac{\partial}{\partial \bar z} (L^s(z) e_s(z)) = sL_{\bar z} (z)  L^{s-1} e_s(z) + L^s \tfrac{\partial}{\partial \bar z} e_s(z) = 0 + L^s \tfrac{\partial}{\partial \bar z} e_s(z).
$$

This means that $\tfrac{\partial}{\partial \bar z} e_s(z)$ is a generalised eigenvector for $L$ of height $s$.   But the space of such eigenvectors is spanned by $e_1,\dots, e_s$ and $\tfrac{\partial}{\partial \bar z} e_s(z)$ does not belong to this space unless $\tfrac{\partial}{\partial \bar z} e_s(z)=0$.  Thus all $e_s(z)$ are holomorphic as stated. This completes the proof of Theorem \ref{thm:8}.
\end{proof}

From Theorem \ref{thm:8} we can also derive the following two properties.

\begin{Corollary}\label{cor:7.3}
Let $L(z)$ satisfy the assumptions of Theorem \ref{thm:8}.  Then $L(z)$ is a polynomial of degree $n-1$ of a holomorphic operator $\widetilde L(z)$ with coefficients that are not necessarily holomorphic.
\end{Corollary}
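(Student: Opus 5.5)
The plan is to use Corollary \ref{cor:7.2} to reduce $L(z)$, by a holomorphic change of basis, to a block-diagonal form. In that form it will be enough to produce a constant (hence holomorphic) $\gl$-regular matrix $\widetilde L$ commuting with $L$. Since $L$ is $\gl$-regular, it lies in the centraliser of any $\gl$-regular matrix it commutes with only if we argue in the right direction, so we will rather show that $L$ commutes with a holomorphic $\gl$-regular $\widetilde L$ whose centraliser is the algebra of polynomials in $\widetilde L$. Then $L=p(\widetilde L)$ with coefficients obtained by solving a linear system that is nondegenerate, so the coefficients are smooth.

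First, by Corollary \ref{cor:7.2}, choose $C(z)$ holomorphic with $C^{-1}LC=\operatorname{diag}(A_1,\dots,A_r)$, where each $A_\alpha$ is upper triangular with $\lambda_\alpha$ on the diagonal. The basis is built from the holomorphic vectors $e_s$ of Theorem \ref{thm:8}, with $\operatorname{Span}(e_1,\dots,e_s)=\operatorname{Ker}(L-\lambda\Id)^s$. Because $L$ is $\gl$-regular, $(A_\alpha-\lambda_\alpha\Id)e_{s}\in\operatorname{Span}(e_1,\dots,e_{s-1})$ with a nonzero coefficient in front of $e_{s-1}$. I would refine the choice inductively, setting $e_{s-1}:=(L-\lambda_\alpha\Id)e_s$ starting from $e_m$. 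The key point is that these vectors stay holomorphic. For this I would use Lemma \ref{lem:7.1}, applied to $L-\lambda_\alpha\Id$, which still commutes with its $\bar z$-derivative up to the scalar term: $(L-\lambda\Id)_{\bar z}=L_{\bar z}-\lambda_{\bar z}\Id$ commutes with $L$. Combined with the computation in the proof of Theorem \ref{thm:8}, this should give $\partial_{\bar z}\bigl((L-\lambda\Id)e_s\bigr)\in$ lower flag terms that vanish.

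Concretely, I expect to show that $N_\alpha=A_\alpha-\lambda_\alpha\Id$ has $\bar z$-derivative commuting with $N_\alpha$. Since $N_\alpha$ is a regular nilpotent, $\partial_{\bar z}N_\alpha$ is then a polynomial in $N_\alpha$ with zero constant term, because $N_\alpha$ stays nilpotent. With $e_m$ holomorphic, this would give that $N_\alpha e_m$ is holomorphic modulo an expression $q(N_\alpha)N_\alpha$. This is the main obstacle: a direct computation shows that $\partial_{\bar z}(N e_m)=(\partial_{\bar z}N)e_m$, which need not vanish. So instead of forcing $N_\alpha$ to become a constant Jordan block, I would take $\widetilde L$ as follows. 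Let $\widetilde L$ be the operator that, in the holomorphic basis $C(z)$, equals $\operatorname{diag}(J_{m_1}(\mu_1),\dots,J_{m_r}(\mu_r))$ with distinct constants $\mu_\alpha$, and whose Jordan structure is adapted to $N_\alpha$. More carefully, set $\widetilde L:=C\,\operatorname{diag}(\mu_\alpha\Id+h_\alpha(N_\alpha))\,C^{-1}$, where each $h_\alpha$ is chosen so that $h_\alpha(N_\alpha)$ is holomorphic and regular nilpotent.

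The simplest route I would try first is to show that the strictly upper triangular part of $N_\alpha$ has holomorphic superdiagonal-leading structure. Using $[N,N_{\bar z}]=0$ and the Toeplitz-type description from the proof of Lemma \ref{lem:7.1}, one can write $N_{\bar z}=\sum_{j\ge1}c_jN^j$. One then seeks a gauge $N=\phi(N_0)$, with $N_0$ holomorphic and $\phi$ a polynomial with smooth coefficients and $\phi'(0)\ne0$, by solving $\partial_{\bar z}$-equations order by order in the nilpotent degree; this is the analogue of Proposition \ref{prop:4.1}. If that succeeds, then $\widetilde L=C\operatorname{diag}(\mu_\alpha\Id+N_{0,\alpha})C^{-1}$ is holomorphic and $\gl$-regular, $L$ commutes with it, and $L=p(\widetilde L)$ with $\deg p\le n-1$ by $\gl$-regularity. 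The smoothness of the coefficients of $p$ follows from the invertible Vandermonde/Toeplitz linear system.
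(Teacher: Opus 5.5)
Your outer framework matches the paper. You reduce via Corollary~\ref{cor:7.2} to a single strictly upper triangular block $N$ with nonzero superdiagonal and $[N,N_{\bar z}]=0$, then exhibit a holomorphic $\gl$-regular $\widetilde L$ commuting with $L$, and conclude $L=p(\widetilde L)$. But the whole content of the corollary is the middle step: producing a holomorphic regular nilpotent element in the centraliser of $N$. Your proposal ends there with ``if that succeeds''.

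Your three attempts at that step do not close it:
\begin{itemize}
\item The Jordan-chain refinement fails, as you observe yourself.
\item The constant $\widetilde L$ of your first paragraph does not exist in general. Take $N^1_2=1$, $N^2_3=1+z$ and all other entries zero. A constant regular nilpotent $a_1N+a_2N^2$ would force both $a_1$ and $a_1(1+z)$ to be constant, which is impossible.
\item The order-by-order $\partial_{\bar z}$ scheme is only announced. You would need a $\bar\partial$-Poincar\'e lemma at every order, and with several variables $z_1,\dots,z_k$ a check of the compatibility conditions at every order. None of this is carried out.
\end{itemize}

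The missing idea is a normalisation that makes holomorphy automatic, with no integration at all.
\begin{itemize}
\item The first rows of $N,N^2,\dots,N^{n-1}$ are triangular, with leading entry $b_1\cdots b_j\neq 0$ in position $j+1$. So there is a unique $\widetilde L=\sum_{j\ge 1}a_jN^j$ whose first row is exactly $(0,1,0,\dots,0)$. It is regular nilpotent because $a_1=1/b_1\neq0$.
\item Since $\widetilde L$ is a polynomial in $N$ and $N_{\bar z}$ commutes with $N$, the derivative $\widetilde L_{\bar z}$ commutes with $N$ and hence with $\widetilde L$. Therefore $\widetilde L_{\bar z}=q(\widetilde L)$ for some polynomial $q$.
\item The first row of $\widetilde L_{\bar z}$ vanishes, because the first row of $\widetilde L$ is constant. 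The first rows of $\Id,\widetilde L,\dots,\widetilde L^{n-1}$ are linearly independent. Hence $q=0$ and $\widetilde L$ is holomorphic.
\end{itemize}
The same algebraically defined $\widetilde L$ works for every $\bar z_i$ simultaneously, so no compatibility issues arise.

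In the language of your scheme, the first-order equation $\partial_{\bar z}a_1+a_1c_1=0$ is solved by $a_1=1/b_1$. This is because comparing $(1,2)$-entries in $N_{\bar z}=\sum_k c_kN^k$ gives $c_1=\partial_{\bar z}b_1/b_1$. The entries of $N$ themselves supply the solutions of your $\bar\partial$-equations, and the first-row normalisation is exactly what exploits this.
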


\begin{Corollary}\label{cor:7.4}
Let $L(z)$ satisfy the assumptions of Theorem \ref{thm:8}.  Then there exists a holomorphic (in $z$) transition matrix $C(z)$ such that
$$
C^{-1}(z) L(z)  C(z) = T(z),
$$
where $T$ is a blockwise Toeplitz upper triangular matrix.
\end{Corollary}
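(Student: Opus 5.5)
We derive Corollary~\ref{cor:7.4} directly from Corollary~\ref{cor:7.2}. As in the proof of Theorem~\ref{thm:8}, the assumptions of Theorem~\ref{thm:8} are preserved under holomorphic similarity transforms. They are also preserved under restriction to invariant blocks: if $\widetilde L=\operatorname{diag}(A_1,\dots,A_r)$ with $\widetilde L = C^{-1}LC$ and $C$ holomorphic, then $\widetilde L_{\bar z}=C^{-1}L_{\bar z}C$ commutes with $\widetilde L$. Since the eigenvalues $\lambda_\alpha$ are pairwise distinct, $\widetilde L_{\bar z}$ preserves each generalised eigenspace, hence is block diagonal, and each $A_\alpha$ commutes with $(A_\alpha)_{\bar z}$. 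Each block $A_\alpha$ is also $\gl$-regular. It therefore suffices to treat a single block $A$: an upper triangular $m\times m$ matrix with the single eigenvalue $\lambda(z)$ and geometric multiplicity $1$.

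For such a block we subtract $\lambda(z)\Id$, which preserves the assumptions, and obtain a nilpotent $\gl$-regular $N(z)$ with $[N,N_{\bar z}]=0$. Fix $z_0$. By Theorem~\ref{thm:8} (or Corollary~\ref{cor:7.3}) we may write $N=p(\widetilde N)$, with $\widetilde N$ holomorphic and $p$ a polynomial with smooth coefficients. Here $\widetilde N$ can be taken nilpotent $\gl$-regular: replace it by $\widetilde N-\mu\Id$, where $\mu$ is its eigenvalue, which is holomorphic as a root of a holomorphic characteristic polynomial with a single root. The key step is to find a holomorphic frame in which $\widetilde N$ is the constant Jordan block $J_m$. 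Choose a holomorphic vector field $v(z)$ with $\widetilde N^{m-1}v\neq 0$ near $z_0$; a constant vector works locally by openness. The frame $e_s=\widetilde N^{m-s}v$, $s=1,\dots,m$, is then holomorphic and linearly independent, and $\widetilde N e_s=e_{s-1}$, $\widetilde N e_1=0$. In this frame $\widetilde N=J_m$, so $N=p(J_m)$ is upper triangular Toeplitz, and therefore $A=\lambda\Id+N$ is upper triangular Toeplitz as well.

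Composing the holomorphic transition matrix from Corollary~\ref{cor:7.2} with these block-wise holomorphic frames gives $C(z)$ and $T(z)=\operatorname{diag}(T_1,\dots,T_r)$ with each $T_\alpha$ upper triangular Toeplitz, as claimed.

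The main obstacle is justifying that the block inherits the assumptions and that $\widetilde N$ is genuinely holomorphic and $\gl$-regular. If invoking Corollary~\ref{cor:7.3} is awkward, an alternative is to argue directly. By Lemma~\ref{lem:7.1} and the $\gl$-regular centraliser argument, $N_{\bar z}$ is a polynomial in $N$ without constant term, say $N_{\bar z}=\sum_{j\ge1} c_j N^j$. One then shows that $\operatorname{Ker}N^{m-1}$ is holomorphic (Theorem~\ref{thm:8}) and picks $v$ holomorphic with $v\notin\operatorname{Ker}N^{m-1}$. Using the frame $N^{m-s}v$ directly fails to be holomorphic, so the holomorphic detour through $\widetilde N$ is the cleaner route and is the one I would use.
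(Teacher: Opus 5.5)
Your proposal is correct and follows the paper's own route. You reduce to a single nilpotent block via Corollary~\ref{cor:7.2}, write it as $p(\widetilde N)$ with $\widetilde N$ holomorphic and nilpotent by Corollary~\ref{cor:7.3}, and holomorphically conjugate $\widetilde N$ to the constant Jordan block $J_m$, so that $p(J_m)$ is upper triangular Toeplitz. You also make explicit what the paper leaves implicit: the blocks inherit the hypotheses, and the holomorphic frame can be built as $e_s=\widetilde N^{m-s}v$. Your nilpotency adjustment of $\widetilde N$ is harmless but unnecessary, since the holomorphic operator constructed in the proof of Corollary~\ref{cor:7.3} is already strictly upper triangular.
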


\begin{proof}[Proof of Corollaries \ref{cor:7.3} and \ref{cor:7.4}]

We first prove Corollary \ref{cor:7.3}.    Notice that the assumptions on $L$ and the property we need to prove are invariant w.r.t. similarity transformations with holomorphic transition matrices $C(z)$.  Hence,  using Corollary \ref{cor:7.2} we may assume that $L$ is a  block diagonal matrix with upper triangular blocks (having different eigenvalues).  We also notice that it is sufficient to prove the statement for each block separately.   Moreover,  if we consider an individual block, we may assume that this block is nilpotent.  

Thus, we need to check that a strictly upper triangular matrix   
\begin{equation}\label{L4}
L = \begin{pmatrix}
0 & b_1 & * &\dots  & * \\
& 0 & b_2 & \ddots & \vdots \\
& & 0& \ddots & * \\
& & &\ddots & b_{n-1} \\
& & & & 0 \\
\end{pmatrix}, \qquad b_i(z)\ne 0
\end{equation}
satisfying the condition that $[L, L_{\bar z}]=0$, is a polynomial of a holomorphic matrix $\widetilde L$. 

It is easy to see that a $\gl$-regular matrix $L$ can be written as a polynomial of a certain matrix $M$ if and only if the centralisers of $L$ and $M$ coincide  (here we use the fact that $L$ is similar to a Jordan block or, more generally, $L$ is $\gl$-regular).

Thus, we can equivalently prove that the centraliser of $L$ contains a holomorphic $\gl$-regular operator.  The following algebraic fact seems to be very helpful.

\begin{Lemma}
For any matrix \eqref{L4} there exists a unique matrix $\widetilde L$ of the form 
\begin{equation}
\label{L5}
\widetilde L = 
\begin{pmatrix}
0 & 1 & 0 & \dots & 0 \\
&0 & c_2 & \dots  & * \\
& & 0 & \ddots& \vdots \\
& & & \ddots & c_{n-1} \\
& & & & 0 \\
\end{pmatrix}, \qquad c_i\ne 0,
\end{equation}
which commutes with $L$.
\end{Lemma}
\begin{proof}
This fact is purely algebraic.  It is sufficient to explicitly find $\widetilde L$  of the form \eqref{L5} from the matrix equation $[L, \widetilde L]=0$  with $L$ given by \eqref{L4}.  This linear equation system is triangular and can be solved step-by-step:  first we find $c_2,\dots, c_{n-1}$, then the element located on the next  diagonal line and so on.  Alternatively, we can find a unique polynomial such that $\widetilde L= p(L)$.   \end{proof}

Next we notice that  $[L, L_{\bar z}]=0$ implies $[\widetilde L,  \widetilde L_{\bar z}]=0$.

Finally, let us check that every operator \eqref{L5} satisfying the condition  $[\widetilde L,  \widetilde L_{\bar z}]=0$ is automatically holomorphic.  The latter condition means that $\widetilde L_{\bar z}=0$.   This fact can be justified as follows. First notice that $[\widetilde L,  \widetilde L_{\bar z}]=0$ implies that $\widetilde L_{\bar z} = q(\widetilde L)$, where $q(\cdot)$ is a certain polynomial, i.e.
$$
\widetilde L_{\bar z}  =  d_0 \Id + d_1 \widetilde L + \dots + d_{n-1}\widetilde L^{n-1}.
$$
 where $\widetilde L^m$ is a matrix of the form 
$$
\widetilde L^m = \begin{pmatrix}
0 & \dots & 0  &  c  & \dots  & * \\
  & \ddots & \vdots  & 0  & \ddots & \vdots \\
  & & 0 &\vdots &\ddots  & *  \\
  & & & 0 && 0  \\
  & & & & \ddots &\vdots  \\
  & & & & & 0 \\
\end{pmatrix},   
$$
with   $c = \left(\widetilde L^m\right)_{m+1}^1 \ne 0$.   In particular, the first rows of the matrices $\Id,\widetilde L, \dots , \widetilde L^{n-1}$ are linearly independent (as row-vectors of length $n$).
It remains to notice that the first row of $\widetilde L_{\bar z}$ is zero. Therefore all the coefficients $d_0, \dots, d_{n-1}$ vanish simultaneously and $\widetilde L_{\bar z}  = 0$, as required.  Thus, $\widetilde L$ is holomorphic.
This completes the proof of Corollary \ref{cor:7.3}.

Corollary \ref{cor:7.4}  easily follows from Corollary \ref{cor:7.3}.  Indeed, $L = p(\widetilde L)$, where $\widetilde L$ is holomorphic and nilpotent.  Then for $\widetilde L$ we can find a holomorphic transition matrix $C(z)$ such that 
$\widetilde L = C(z) J C^{-1}(z)$, where $J$ is the standard nilpotent Jordan block  (constant). Then 
$L = p(C(z) J C^{-1}(z)) = C(z) p(J) C^{-1}(z)$, where $T(z) = p(J)$ is Toeplitz as required. 
\end{proof}

Finally,  we combine Theorems \ref{thm:6}, \ref{thm:7} and Corollary \ref{cor:7.4} to derive one more property of  Haantjes operators with complex eigenvalues, which is crucial in the proof of Theorem \ref{thm:1b}.

\begin{Corollary}\label{cor:7.5}
Let $L$ be a Haantjes operator which is similar to a real Jordan block related to a pair of complex conjugate eigenvalues $\lambda^{\pm} = a \pm \mathrm{i} \, b$.  Then there exists a complex structure $J$ such that in any complex coordinate system $z_1,\dots, z_n$, the operator $L$ is given by an $n\times n$ complex matrix $L^{\mathbb C}$ which can be locally written in the form $L^{\mathbb C} = p(\widetilde L)$, where $\widetilde L$ is a holomorphic Haantjes operator and $p(\cdot )$ is a certain polynomial whose coefficients are complex-valued smooth functions of $z_1,\dots, z_n$, not necessarily holomorphic. More generally, this conclusion holds true locally for any $\gl$-regular algebraically generic Haantjes operator $L$ with no real eigenvalues.

\end{Corollary}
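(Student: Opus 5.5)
The plan is to chain Theorem \ref{thm:6}, Theorem \ref{thm:7} (in the form of Corollary \ref{cor:7.1}) and Corollary \ref{cor:7.3}. First, since $L$ has no real eigenvalues (it is similar to a real Jordan block with $b\neq 0$, or, in the general statement, has no real eigenvalues at all), Theorem \ref{thm:6} provides a canonical complex structure $J=p_0(L)$ commuting with $L$. In any $J$-holomorphic coordinates $z_1,\dots,z_n$, the operator $L$ is $\mathbb C$-linear and is represented by a complex $n\times n$ matrix $L^{\mathbb C}$. By Remark \ref{rem:7.1}, the eigenvalues of $L^{\mathbb C}$ all have positive imaginary part, so $L^{\mathbb C}$ and $\overline{L^{\mathbb C}}$ have no common eigenvalues. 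The converse part of Theorem \ref{thm:7} then yields two facts: $[L^{\mathbb C},L^{\mathbb C}_{\bar z_\alpha}]=0$ for all $\alpha$, and the complex Haantjes torsion $\mathcal H_{L^{\mathbb C}}(\partial_{z_\alpha},\partial_{z_\beta})$ vanishes.

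Next I would check the hypotheses of Theorem \ref{thm:8}. In the Jordan-block case, $L^{\mathbb C}$ is pointwise similar to a single complex Jordan block $J_n(a+\mathrm i b)$, so it is $\gl$-regular with constant Segre characteristic. In the general case, $\gl$-regularity and algebraic genericity of the real operator transfer directly to $L^{\mathbb C}$, because the generalised eigenspaces of $L^{\mathbb C}$ correspond to the eigenvalues $\lambda^+_k$. Corollary \ref{cor:7.3} then gives $L^{\mathbb C}=p(\widetilde L)$, where $\widetilde L$ is holomorphic and $p$ has smooth complex coefficients. Looking at the construction in that proof, $\widetilde L$ is block-wise obtained from $L^{\mathbb C}$ by holomorphic conjugation and polynomial operations, so it is itself a polynomial in $L^{\mathbb C}$ with (possibly non-holomorphic) coefficients. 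I would record this explicitly, since the next step uses it.

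The main obstacle is showing that $\widetilde L$ is Haantjes in the holomorphic sense, that is, $\mathcal H_{\widetilde L}(\partial_{z_\alpha},\partial_{z_\beta})=0$. My approach is to write $\widetilde L=q(L^{\mathbb C})$ with smooth complex coefficients; this is possible because $L^{\mathbb C}$ is $\gl$-regular and $\widetilde L$ commutes with it. The real operator $\widetilde L_{\R}=q(L)$, with $q$ interpreted via $J$ (real part times $\Id$ plus imaginary part times $J$), is then a polynomial in $L$ with smooth real coefficients, because $J=p_0(L)$. By Proposition \ref{prop:3.3}, $\widetilde L_{\R}$ is therefore Haantjes in the real sense.

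Finally, I would apply the converse part of Theorem \ref{thm:7} to $\widetilde L$. If the spectrum of $\widetilde L$ does not have positive imaginary part, I would first shift it: replace $\widetilde L$ by $\widetilde L+c\,\Id$ with a constant $c\in\mathbb C$, chosen so that the spectrum lies in the upper half plane, and absorb the shift into $p$. This shift is harmless because it adds a constant multiple of $\Id$, so both holomorphy and the Haantjes property of $\widetilde L_{\R}$ are preserved. Theorem \ref{thm:7} then gives $\mathcal H_{\widetilde L}(\partial_{z_\alpha},\partial_{z_\beta})=0$, and since $\widetilde L$ is holomorphic, it is a holomorphic Haantjes operator. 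As a consistency check, for holomorphic $\widetilde L$ the $\partial_{\bar z}$ parts of the torsion vanish identically, so only the complex torsion matters.
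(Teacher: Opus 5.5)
Your proposal is correct and follows the route the paper indicates: Theorem \ref{thm:6} supplies $J$, the converse of Theorem \ref{thm:7} gives $[L^{\mathbb C},L^{\mathbb C}_{\bar z_\alpha}]=0$, and Corollary \ref{cor:7.3}/\ref{cor:7.4} then gives $L^{\mathbb C}=p(\widetilde L)$; your argument that $\widetilde L_{\R}$ is a real polynomial in $L$ (via $J=p_0(L)$), and hence Haantjes by Proposition \ref{prop:3.3}, supplies a step the paper leaves implicit. The final shift and second appeal to Theorem \ref{thm:7} are harmless but unnecessary, because by the block-diagonal form \eqref{eq:Lreal} the vanishing of $\mathcal H_{\widetilde L_{\R}}(\partial_{z_\alpha},\partial_{z_\beta})$ is exactly condition \eqref{eq:complNij} for $\widetilde L$.
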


\subsection*{Acknowledgments.}  A.\,B. was  supported by the Ministry of Science and Higher Education of the Republic of Kazakhstan (grant No. AP23483476),   V.\,M. by  the DFG (project  529233771) and the ARC Discovery Programme DP210100951, and A.\,K. by the Ministry of Education and Science of the Russian Federation as part of the program of the Moscow Center for Fundamental and Applied Mathematics (Agreement  075-15-2025-345).

  Data sharing is not applicable to this article. The authors declare no conflicts of interest.

\printbibliography

\end{document}